\newcommand{\abs}[1]{\vert #1 \vert}
\newcommand{\set}[1]{\{ #1 \}}
\newcommand{\norm}[1]{\| #1 \|}
\newcommand{\conv}{\operatorname{conv}}
\begin{document}

\title{An Algorithm for the Separation-Preserving\\ Transition of Clusterings}

\author{Steffen Borgwardt\inst{1} \and Felix Happach\inst{2} \and Stetson Zirkelbach\inst{3}}

\institute{\email{\href{mailto:steffen.borgwardt@ucdenver.edu}{steffen.borgwardt@ucdenver.edu}};
University of Colorado Denver 
\and \email{\href{mailto:felix.happach@tum.de}{felix.happach@tum.de}}; Technical University of Munich
\and \email{\href{mailto:stetson.zirkelbach@ucdenver.edu}{stetson.zirkelbach@ucdenver.edu}};
University of Colorado Denver 
}

\date{}

\maketitle

\begin{abstract}
The separability of clusters is one of the most desired properties in clustering. There is a wide range of settings in which different clusterings of the same data set appear. We are interested in applications where there is a need for an explicit, gradual transition of one separable clustering into another one. This transition should be a sequence of simple, natural steps that upholds separability of the clusters throughout.

We design an algorithm for such a transition. We exploit the intimate connection of separability and linear programming over bounded-shape partition and transportation polytopes: separable clusterings lie on the boundary of partition polytopes, form a subset of the vertices of the corresponding transportation polytopes, and circuits of both polytopes are readily interpreted as sequential or cyclical exchanges of items between clusters. This allows for a natural approach to achieve the desired transition through a combination of two walks: an edge walk between two so-called radial clusterings in a transportation polytope, computed through an adaptation of classical tools of sensitivity analysis and parametric programming; and a walk from a separable clustering to a corresponding radial clustering, computed through a tailored, iterative routine updating cluster sizes and re-optimizing the cluster assignment of items.
\end{abstract}
 
\noindent{\bf Keywords:} separability; clustering; partition polytopes; linear programming; polyhedral theory\\
\noindent{\bf MSC 2010:} 90C90, 90C05, 90C31, 62H30, 51M20

\section{Introduction}\label{sec:intro}

The {\em partitioning} or {\em clustering} of a data set $X = \{x_1,\dots,x_n\}$ into a set $C = (C_1,\dots,C_k)$ of disjoint clusters is arguably the most important task in unsupervised learning. It arises in applications in many fields, including operations research, machine learning, and statistics, and there is a wealth of literature on theory, algorithms, and applications \citep{v-98,jmf-99,ss-02,xw-08,bdw-09,hr-12,ar-13}. 

Our interest lies not in the task itself, but an application derived thereof: given two so-called separable clusterings, we design an algorithm to compute a gradual transition between them, in the form of a sequence of clusterings, retaining separability throughout. We begin with some formal definitions and notation about clusterings and separability, before turning to motivation of this work and an outline of our contributions.

\subsection{Separable Clusterings}

Let us provide a formal definition of the term clustering and some related terms.

\begin{definition}[Clustering]\label{def:clustering}
A \emph{clustering} of $X$ is a partition $C = (C_1,\dots,C_k)$ of $X$. For $i \in [k]$, we call $C_i$ the \emph{$i$-th cluster} and $\abs{C_i}$ denotes its \emph{size}, i.e., the number of items in $C_i$.
The \emph{shape} of $C$ is $\abs{C} \coloneqq (\abs{C_1},\dots,\abs{C_k})$.
\end{definition}


Based on a similarity measure for items in the data set, the goal of clustering is to assign items that are similar to each other to the same cluster. Most clustering tasks are performed in a geometric setting: the data set $X = \{x_1,\dots,x_n\} \subseteq \mathbb{R}^d$ is represented as a finite collection of distinct items (i.e., $x_i\neq x_j$ for $i\neq j$) in a $d$-dimensional real space, and measures of similarity correspond to a norm or quasi-norm in the space; the Euclidean norm $\norm{\cdot}$ or squared Euclidean norm $\norm{\cdot}^2$ are common. More sophisticated similarity measures are typically implemented indirectly, through the application of a kernel function to transform the space $\mathbb{R}^d$ underlying $X$ to a different one in which the Euclidean norm or its square again are viable measures \citep{ss-02}. In this paper, we assume that the items in $X$ are distinct and use the squared Euclidean norm.

In this setting, the desire to create clusters of similar items becomes a desire to partition the data set into {\em separable} clusters. 
Two clusters $C_1,C_2$ are called separable if there exists a (separating) hyperplane that partitions the underlying space into two halfspaces, each of which contains one of the clusters. A {\em separable clustering} $C = (C_1,\dots,C_k)$ requires separability of all pairs of clusters, as well as a special positioning of the corresponding hyperplanes: they have to create a partition of the underlying space into a cell complex $P = (P_1,\dots,P_k)$ of polyhedral cells, one cell $P_i$ for each cluster $C_i$ formed through the intersection of all halfspaces that contain $C_i$. 

Separable clusterings have an algebraic representation as {\em (constrained) least-squares assignments}. Let $s_1,\dots,s_k \in \mathbb{R}^d$ be a set of {\em sites} (or centers) in the same space as data set $X$. For a simple wording, we call the collection of all sites, as well as the vector $s=(s_1^T,\dots,s_k^T)^T$, a {\em site vector}.


\begin{definition}[Least-Squares Assignment (LSA)]\label{def:LSA}
A \emph{least-squares assignment}, or LSA, for a given site vector $s_1,\dots,s_k \in \mathbb{R}^d$ is a clustering $C = (C_1,\dots,C_k)$ that minimizes $$\sum_{i = 1}^k \sum_{x \in C_i} \norm{x - s_i}^2.$$
A clustering that minimizes this term over all clusterings with the same shape as $C$ is called a \emph{constrained (or balanced) LSA}. \end{definition}

Most LSAs considered in this paper are constrained LSAs, and we simply call them LSAs when the context is clear. In this paper, we assume that $\sum_{j = 1}^n x_j = 0$. Note that if $v=\sum_{j = 1}^n x_j \neq 0$, the whole data set $X$ can be translated by $-\frac{v}{n}$ to have this property. This is not a restriction for the computation of an LSA, which can be seen by translating the site vector $s$ in the same way.


Constrained LSAs are intimately connected to cell complexes called \emph{power diagrams}. Power diagrams are a classical topic in computational geometry, and generalize the well-known Voronoi diagrams \citep{Aurenhammer1987,AurenhammerHoffmannAronov1998}. A power diagram can be represented in several ways and it is trivial to switch between representations \citep{Aurenhammer1987,Borgwardt2010}. For example, a power diagram can be specified through the definition of a ball, with site (or center) $s_i$ and radius $r_i$, for each cell $P_i$. The hyperplanes separating pairs of cells are equally far from the sites with respect to a distance measure based on the corresponding balls. Geometrically, they run through the common intersection points of (a joint scaling of) the balls. See Figure \ref{fig:pd} for an example of this construction. In this paper, we use an alternative, equivalent representation in which the positions of hyperplanes separating the cells are given explicitly through differences of values  $\gamma_1,\dots,\gamma_k \in \mathbb{R}$; see~\cite{Aurenhammer1987,Borgwardt2015}. We provide a formal definition.


\begin{figure}
\centering
\includegraphics[scale=0.6]{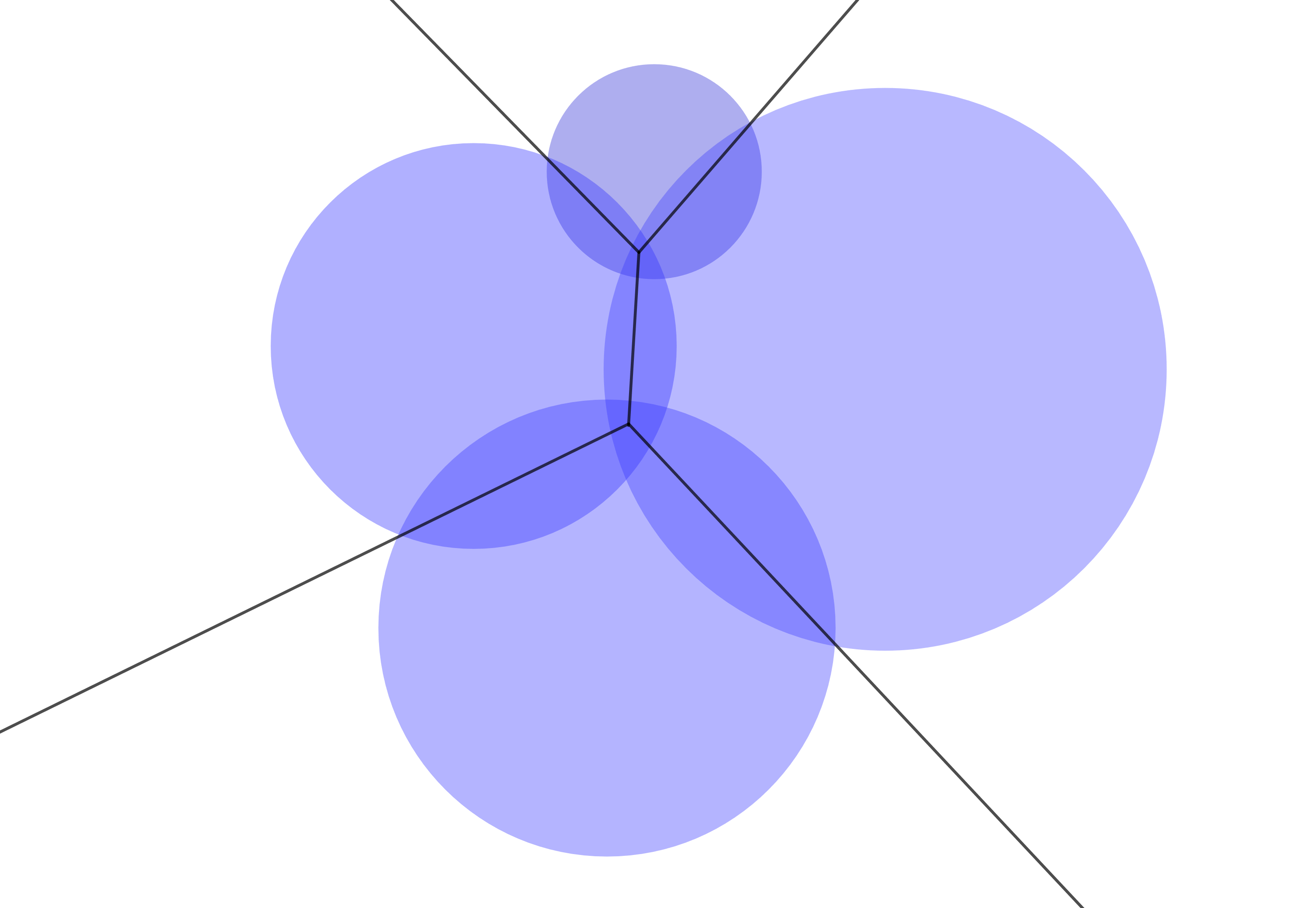}
\caption{A power diagram in $\mathbb{R}^2$. Each cell specifies a site and radius of a ball around it. Separating hyperplanes between cells run through the common intersection points of a joint scaling of the corresponding balls.}\label{fig:pd}
\end{figure}

\begin{definition}[Power Diagram]\label{def:powerdiagram}
For given $\gamma_1,\dots,\gamma_k \in \mathbb{R}$ and site vector $s_1,\dots,s_k \in \mathbb{R}^d$,  the set
\begin{equation}\label{powerdiagram:hyperplane}
P_i \coloneqq \{ x \in \mathbb{R}^d \, | \, (s_\ell - s_i)^T x \leq \gamma_\ell - \gamma_i \, \text{ for all } \ell \in [k] \setminus \{i\} \},
\end{equation}
is the $i$-th cell of the \emph{power diagram} $(P_1,\dots,P_k)$ of $\mathbb{R}^d$.
\end{definition}


\cite{AurenhammerHoffmannAronov1998} proved the following connection between power diagrams and constrained LSAs: if a power diagram satisfies $C_i \subseteq P_i$ for all $i \in [k]$, then $C$ is a constrained LSA to the site vector $s$ of the power diagram. We call a power diagram that satisfies this property a {\em separating power diagram} for the underlying LSA. Conversely, if $C$ is a constrained LSA to a given site vector $s$, then there exists a power diagram with site vector $s$ that \emph{induces} $C$ by assigning all items in the same cell to the same cluster (and for items on the boundary between cells to any of these cells' clusters). See Figure \ref{fig:basic} for an example of a separating power diagram. The same concept of cluster separation appears under other names in the literature, such as multiclass support vector machines \citep{bb-99,v-98,ww-99,cs-01} or piecewise-linear separability \citep{bm-92}.

\begin{figure}[t]
\centering
\includegraphics[scale=1.0]{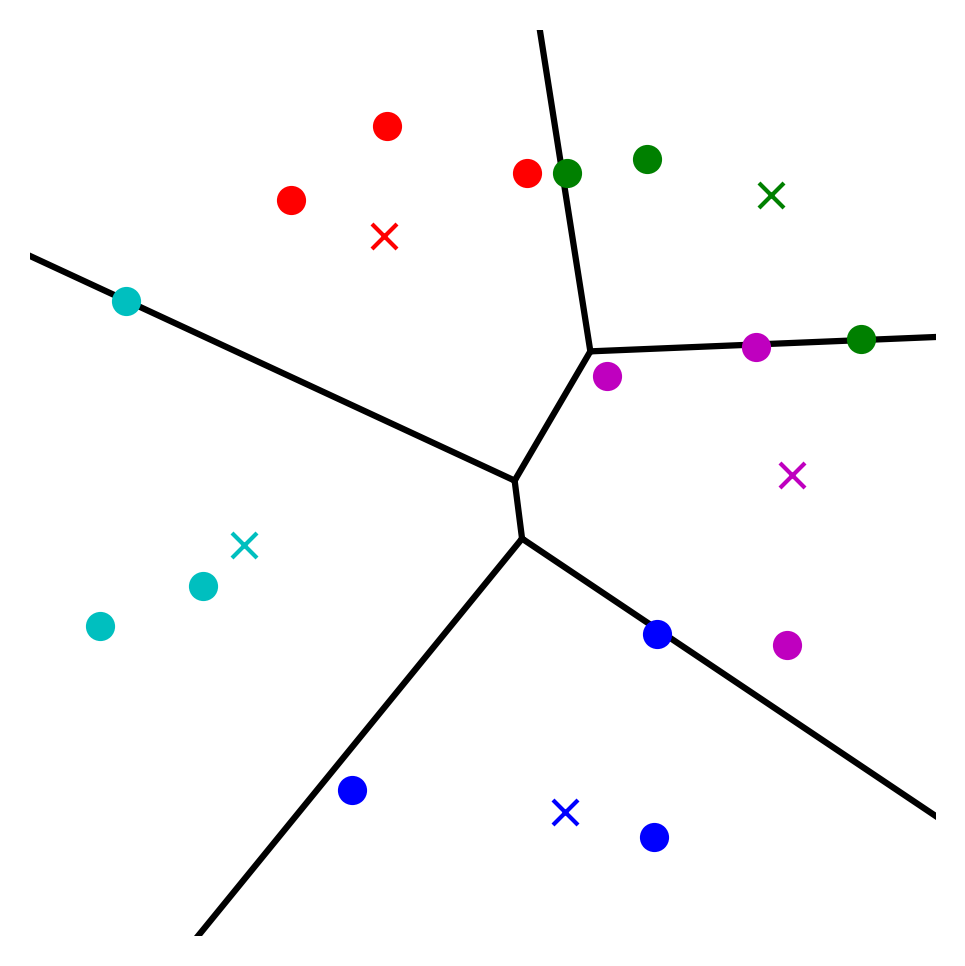}
\caption{A separating power diagram for a clustering of five clusters (different colors) in $\mathbb{R}^2$. Items of the clusters are shown as filled circles, the sites for the cells are shown as crosses.}
\label{fig:basic}
\end{figure}

Informally, there is a one-to-many correspondence between (separable) clusterings that `allow' separating power diagrams, and power diagrams that `induce' the clustering by assigning all items in a cell to the same cluster \citep{AurenhammerHoffmannAronov1998}. Importantly, a constrained LSA and corresponding separating power diagram can be constructed from the {\em same} site vectors $s_1,\dots,s_k$ (marked as crosses in Figure \ref{fig:basic} and later figures). 

When two clusterings are equally good with respect to a given site vector, then there exists a power diagram that serves as a separating power diagram for both clusterings; see Figure \ref{fig:SharedSepPD}. We call such a power diagram a {\em shared (separating) power diagram}. Note that clusterings with a shared power diagram can only differ by items that lie on the separating hyperplanes.

\begin{figure}[t]
\centering
\subcaptionbox{First clustering}{\includegraphics[width=0.45\textwidth]{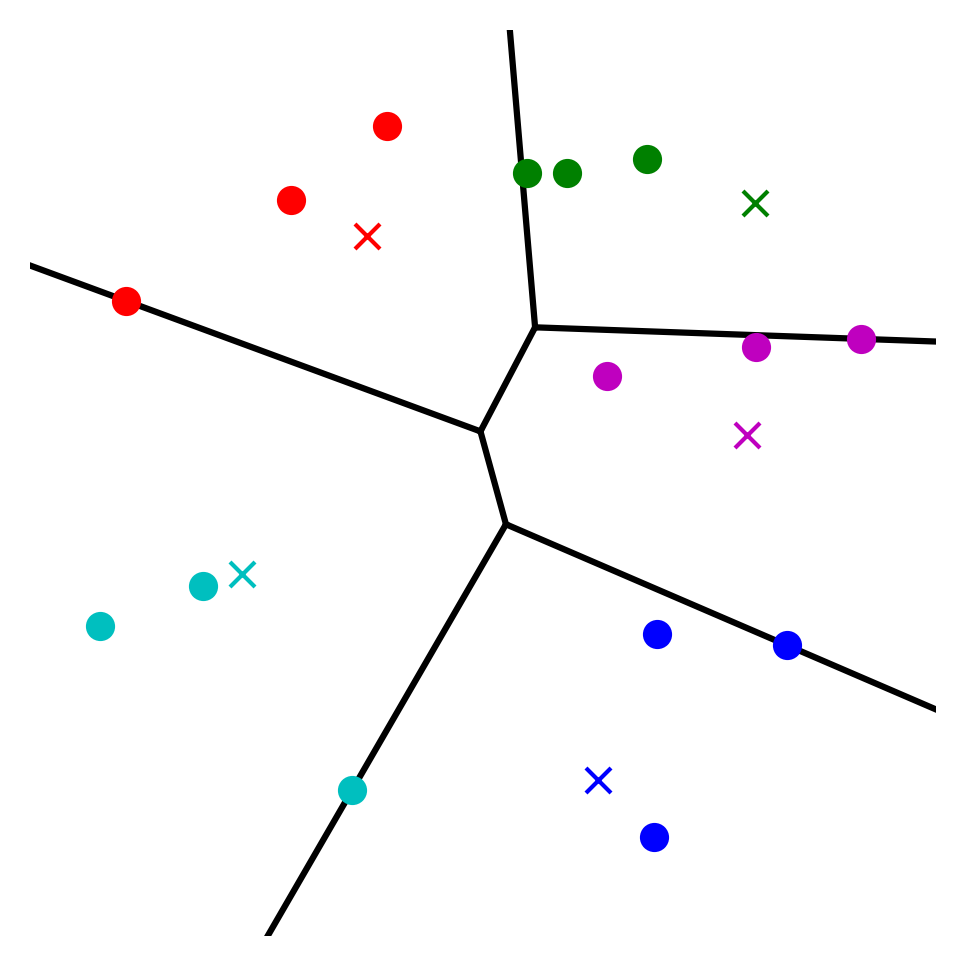}}%
\hfill
\subcaptionbox{Second clustering }{\includegraphics[width=0.45\textwidth]{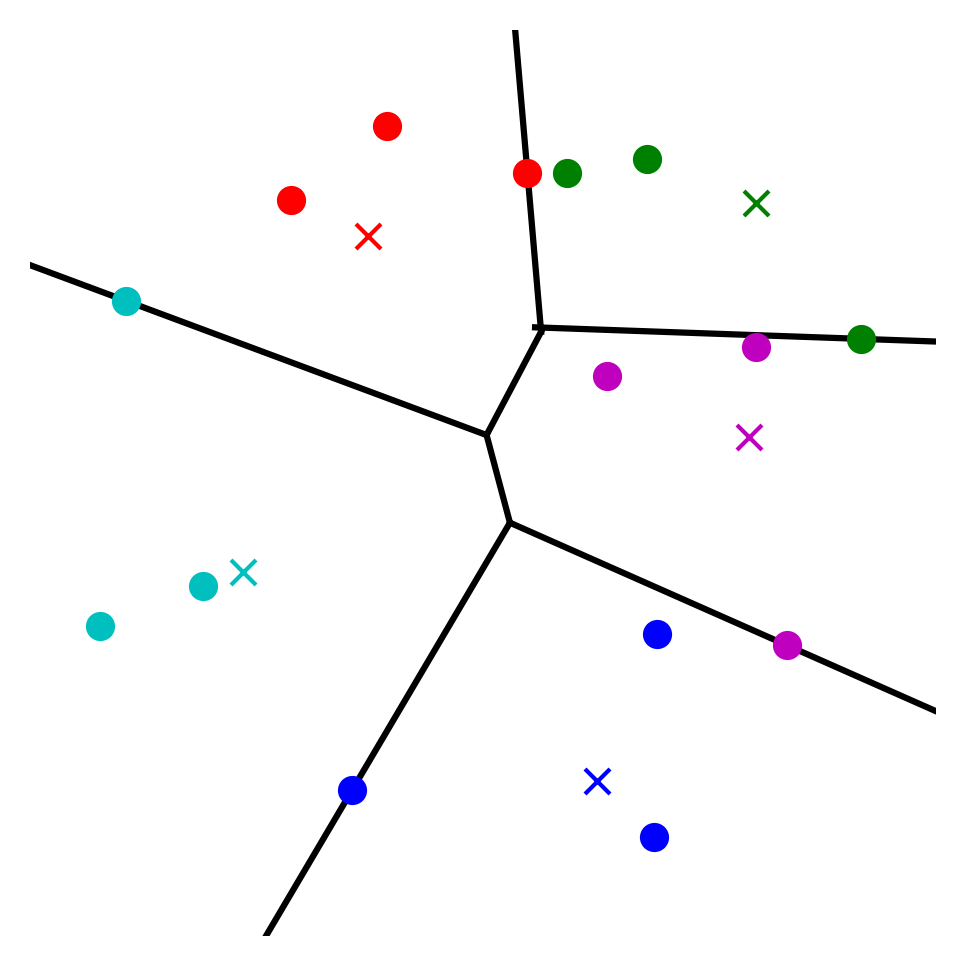}}%
\caption{Two clusterings that allow a shared separating power diagram. The clusterings only differ by items that lie on the separating hyperplanes between clusters.}
\label{fig:SharedSepPD}
\end{figure}

\subsection{Transitions between Separable Clusterings}\label{sec:why}

It is common for different separable clusterings of the same data set to become relevant. For example, different algorithms for the clustering of the same data set return different solutions. Popular algorithms like the $k$-means algorithm return different solutions themselves, depending on their input; in the case of $k$-means, depending on the inital sites \citep{m-67}. It is standard practice to run several algorithms to find several clusterings of the same data set, and then to choose one that exhibits desirable properties.

There is considerable interest in comparing clusterings and measuring their similarity; see, e.g., \cite{ma-84,ww-07,m-07,g-17}. Given two clusterings, such measures are typically built from pairwise relationships among the data items, such as a ratio of the number of items that were assigned to the same cluster versus to different clusters. In particular, this is done to measure the `robustness' of a clustering -- if different algorithms return similar clusterings, it is likely that a natural structure in the data set has been found. Observations on such clusterings are deemed more reliable. 

In this paper, we improve on a recent direction of research in which different clusterings of the same data set appear in a different way: in some applications, it is of interest to design an {\em explicit transition} between two clusterings. Our goal is to design algorithms for the transition between two separable clusterings of the same data set; we call them {\em initial} clustering and {\em target} clustering. Figure \ref{fig:problemVis} shows an example. The transition should be a sequence of clusterings, each of them {\em retaining separability}, that gradually transitions the initial clustering into the target one. Our key contribution over previous work is the retention of separability. As previous work \citep{Borgwardt2013,bv-19a} is not designed to take into account locations of items in $\mathbb{R}^d$, new methodology is required to compute such a transition. We develop it in this paper.
%

\begin{figure}
\centering
\subcaptionbox{Initial clustering}{\includegraphics[width=0.45\textwidth]{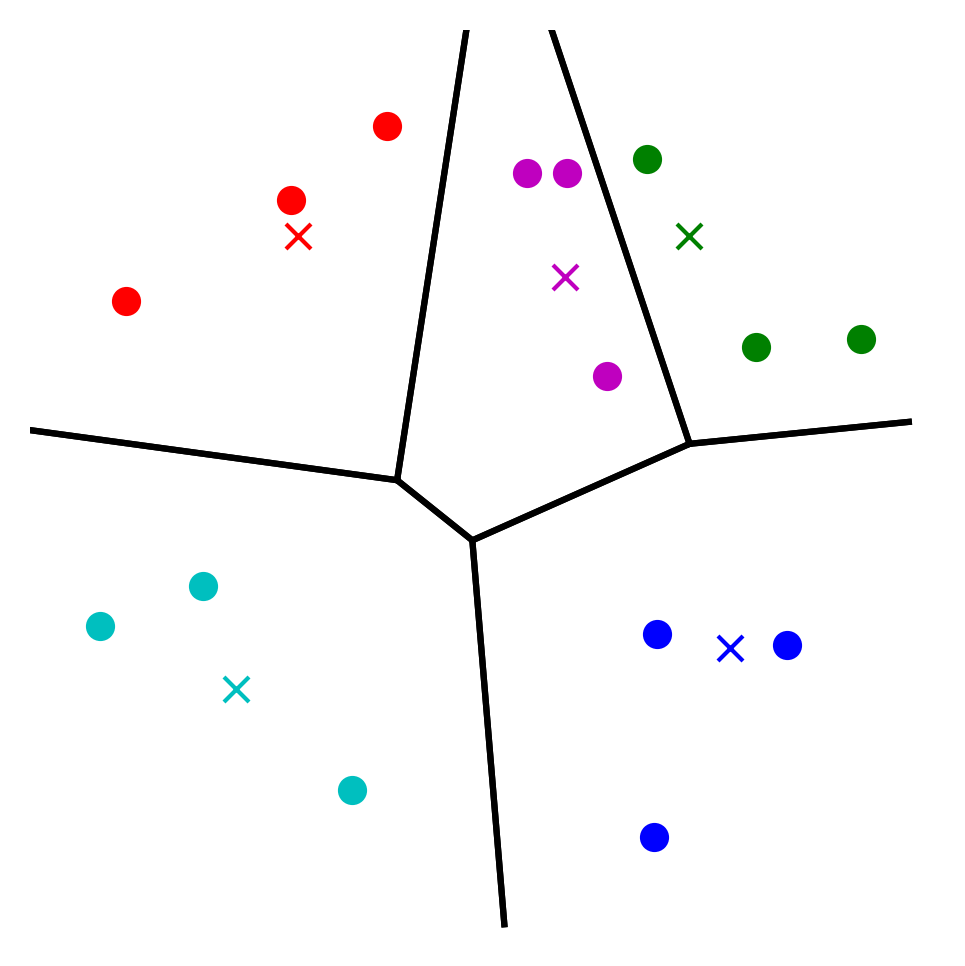}}%
\hfill
\subcaptionbox{Target clustering }{\includegraphics[width=0.45\textwidth]{5ptSimpleEndLSA.png}}%
\caption{An initial and target clustering of the same data set. We design an algorithm to find a sequence of separable clusterings that gradually transitions the initial clustering into the target clustering.}
\label{fig:problemVis}
\end{figure}

The ability to compute such transitions facilitates a number of applications. For example, it gives a conceptually different measure for the similarity of two clusterings in the form of a {\em transformation distance} \citep{bv-19a}. For this work, the most important driver is a common situation in operations research: there is an (original) initial clustering of a data set reflecting the current state, as well as a (new, desired) target clustering that should be implemented in practice. The two clusterings may differ significantly, and one wants to devise an explicit, gradual transition between them over time. Intermediate clusterings are temporary solutions in practice. 

Our original interest in a transition between separable clusterings came from an application in land consolidation. The redistribution of farmland can be modeled as a clustering problem \citep{b-03,bbg-11}: each cluster represents a farmer and the items are the geographical locations of lots in an agricultural region. Separable clusterings are desirable due to placing each farmer's lots in a convex cell -- in practice, this translates to adjacent lots being assigned to the same farmer, low overall driving distances, and a dramatically more efficient cultivation overall \citep{bbg-14}. Over the last two decades, a number of such land redistributions have been completed successfully in Southern Germany and the regions transitioned to a separable clustering. However, the planned redistributions would often trade more than $50\%$ of lots. For stability of their farming processes, the farmers asked for a gradual transition to the target clustering through a sequence of intermediate clusterings \citep{bv-19a}. 

Such land redistributions are implemented through so-called lend-lease agreements that run for five to ten years. During the long period of such an agreement, properties of lots and farmers' possession in the region change, and a new separable clustering is planned for the next time-period. The goal becomes to gradually transition from a separable clustering to the new separable clustering. Keeping separability throughout this transition means that an efficient cultivation always remains possible; this has become one of the main requests of the farmers.

We see similar applications in high-efficiency, large-scale computing. It relies on the scheduling and grouping of (computation) jobs to computing resources. The scheduling of jobs to resources has become increasingly complex and there is a wide range of commercial solutions \citep{rbabbhjmprk-16}. Computing resources are commonly represented as items of a data set in two- or three-dimensional space, with physical distances corresponding to the communication times between the resources.  For efficiency, the resources assigned to the same process should be close to each other \citep{ucbvp-19}. This naturally leads to an integration of clustering techniques -- the aforementioned `grouping' -- into the scheduling \citep{lb-21}; each job is a cluster, and separable clusterings correspond to an efficient assignment of resources. As jobs start and end at different times and request different amounts of resources at different times -- freeing unneeded resources or requesting more -- schedulers provide regular updates to the current clustering. 

 Similar questions also arise in the setting of data storage, where related information (a cluster) is stored in resources that are physically close to each other for fast access; again separable clusterings are particularly efficient. The necessity to make changes to current storage is an important consideration for state-of-the art storage solutions \citep{amazon-16}. For example, a planned maintenance of the resources, such as rebooting or physical replacement, or the migration to a new storage system would lead to the desire to transition to a new clustering. Retaining separability throughout would guarantee fast access during the transition.




There are further promising applications in other areas, such as in the clustering of customers in service, entertainment, and insurance industries. A company may want to gradually transition their customers to a new clustering of premium classes over time. Separable clusterings of customers are fair in the sense that there are no outliers: the underlying separating power diagram serves as a classifier; new customers are assigned to the cell's cluster that they fall in \citep{bm-92,v-98}. The ability to preserve separability of the clusterings during a gradual transition retains the ability to fairly assign customers to a cluster at any time.


\subsection{Outline}\label{sec:outline}
In this paper, we provide the theory and develop methods for a gradual transition between two separable clusterings in the form of a sequence of clusterings, each retaining separability. 
In Section \ref{sec:contribution}, we provide an overview of our strategy, and introduce necessary tools from the literature. In Section \ref{sec:transition:overview}, we describe how to algorithmically realize this strategy and discuss the properties of the transition. Sections \ref{sec:init-to-rad} and \ref{sec:rad-to-rad} are dedicated to the necessary technical details and proofs. 
We conclude with a brief outlook on some open questions and natural next steps, in Section \ref{sec:conclusion}. Proof-of-concept implementations and some examples are available at \url{https://github.com/szirkelbach/Transitioning-Separable-Clusterings}. We provide a couple of brief appendices. In Appendix \ref{app:ranging}, we recall some background on ranging for degenerate vertices; these techniques are required for an implementation of our algorithms. In Appendix \ref{app:radial}, we take a closer look at the (restriction of) cluster sizes throughout the transition. In Appendix \ref{app:computations}, we report on some computational experiments.


\section{The Main Strategy and Important Tools}\label{sec:contribution}


A key goal in the design of a gradual transition between two separable clusterings is for it to take only few, simple steps. This drives the design of our approach in two ways, which we describe in Section \ref{sec:simpletransitions}. In Section \ref{sec:upholdingseparability}, we exhibit our tools to keep separability throughout the transition.

\subsection{Simple Steps, Direct Transitions}\label{sec:simpletransitions}

First, the transition should be a sequence of simple steps. We achieve this through the application of a so-called {\em sequential} or {\em cyclical exchange} of items in each step. For such an exchange, one selects an ordered list of clusters and one item from each cluster; the next clustering in the transition is then derived by moving the items to new clusters following the ordered list. These exchanges are a simple choice that allows for the construction of the desired transition, and they have been important in the construction of clustering transitions in easier settings and the studies of combinatorial diameters \citep{Borgwardt2013,bv-19a}. 

To compare two clusterings and to formally define exchanges of items, we make use of a so-called \emph{clustering difference graph}. For two clusterings $C,C'$, the clustering difference graph $CDG(C,C')$ is a labeled directed multigraph with one node for every $i \in [k]$ and an arc $(i,\ell)$ for distinct $i,\ell \in [k]$ with label $x$ if $x \in C_i \cap C'_\ell$.

\begin{definition}[Clustering Difference Graph]\label{def:CDG}
The clustering difference graph (CDG) of two clusterings $C,C'$ of $X$ is defined as $CDG(C,C') = ([k],E)$ with $E = \set{(i,\ell,x) \, | \, i,\ell \in [k], \, i\not= \ell, \, x \in C_i \cap C'_\ell}$.
\end{definition}

For our purposes, CDGs appear when we compare a current clustering $C$ to a target clustering $C'$. The $CDG(C,C')$ is a convenient way to represent the necessary changes in the assignment of items to clusters. Informally, a CDG contains an arc from node $i$ to node $j$ with label $x$ if the item $x$ has to move from cluster $C_i$ to cluster $C'_j$. One may remove isolated nodes in a CDG; these nodes correspond to clusters that have the same items in both $C$ and $C'$.

We call a cycle in a CDG a \emph{cyclical exchange} of items between clusters, and a path in the clustering graph a \emph{sequential exchange}. If $CDG(C,C')$ consists of a single cyclical or sequential exchange, we say that $C$ and $C'$ {\em differ} by a single exchange. {\em Applying} a cyclical or sequential exchange refers to updating $C$ to the clustering $C'$ that differs from $C$ by only this exchange. Informally, the item reassignments indicated by the arcs of the cyclical or sequential exchange are performed. Figure \ref{fig:basisChange} depicts an example.

\begin{figure}[t]
\centering
\subcaptionbox{Initial clustering and associated power diagram}
{\includegraphics[width=0.35\textwidth]{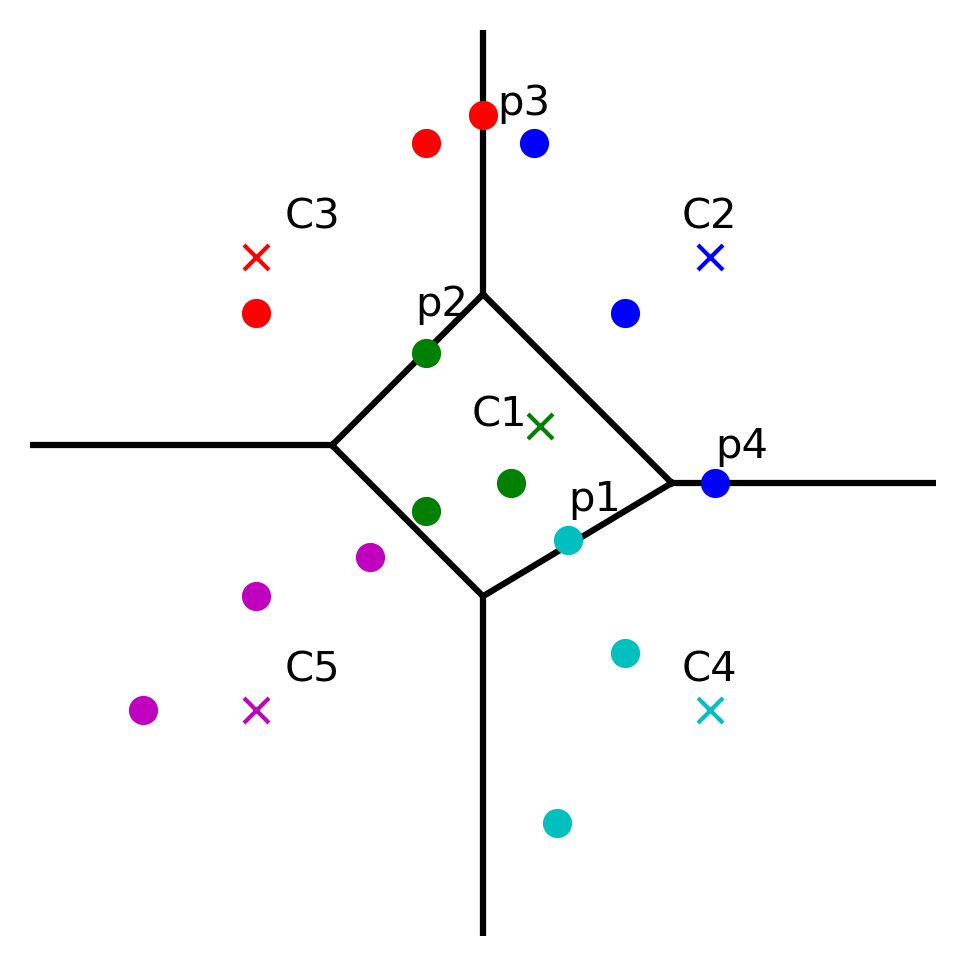}}%
\hfill
\subcaptionbox{Target clustering that allows the same, shared power diagram}
{\includegraphics[width=0.35\textwidth]{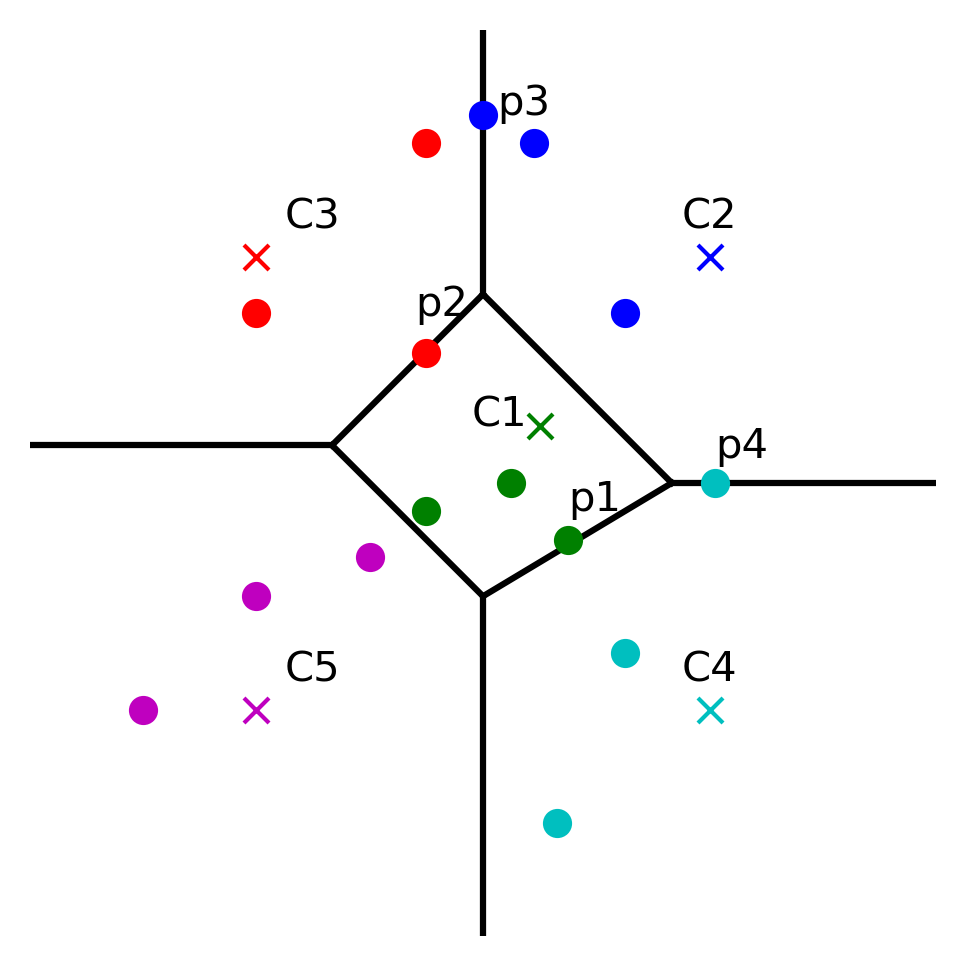}}%
\hfill
\subcaptionbox{The cyclical exchange required to transition the initial into the target clustering.}
{\includegraphics[width=0.45\textwidth]{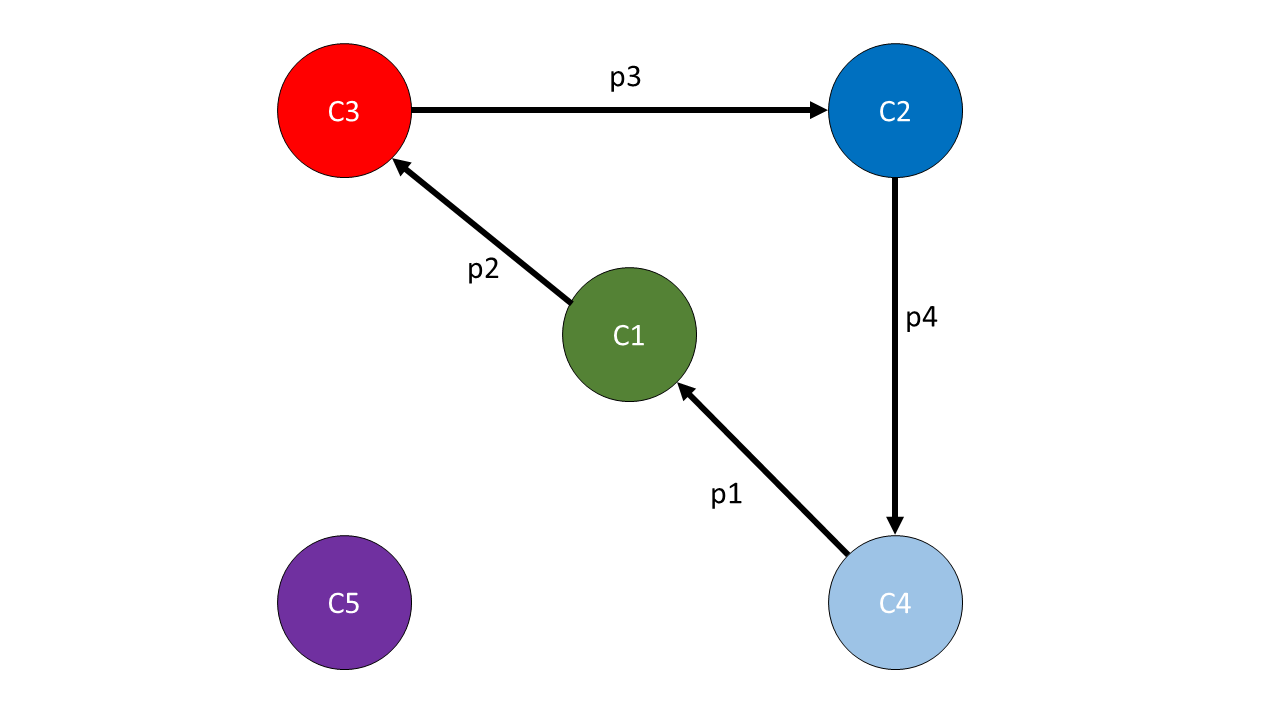}}
\caption{Two clusterings (top) that allow a shared power diagram. The two clusterings differ only by a single cyclical exchange (bottom): the CDG shows clusters as nodes; the arcs between them are labeled with the items that have to be moved. }
\label{fig:basisChange}
\end{figure}

For a transition between two clusterings, we create a sequence of intermediate clusterings that differ from the next by just a single exchange. There are several arguments why this is a natural choice: the only conceptually simpler difference between two clusterings takes the form of an exchange of a {\em single item}. However, such simpler exchanges would immediately violate cluster size bounds if the shapes of $C$ and $C'$ coincide -- in this case, $CDG(C,C')$ decomposes into arc-disjoint cycles, and only cyclical exchanges may be applied to retain feasible clusterings. Further, our more general exchanges, in a sense, aggregate several single-item exchanges, and perform them together. This is beneficial for a low number of steps, i.e., number of clusterings, in the transition. 

Second, the sites used for the construction of both LSAs and separating power diagrams take a central role. Let $s=(s_1^T,\dots,s_k^T)^T$ be the site vector for the initial clustering $C^s$ in the transition, and let $t=(t_1^T,\dots,t_k^T)^T$ be the site vector for the target clustering $C^t$. We construct a sequence of clusterings that follows a {\em linear transition} from $s$ to $t$: essentially, we want to identify the intermediate clusterings and power diagrams that occur if we linearly move the location of the sites $s_1,\dots,s_k$ of $C^s$ to the sites $t_1,\dots,t_k$ of $C^t$. Each of these clusterings is itself a constrained LSA for a site vector $(1-\lambda) s +  \lambda t$ for some $0<\lambda<1$. 

Again, there are several arguments why a linear transition from $s$ to $t$ is a natural choice. It corresponds to the shortest transition between the site vectors in a Euclidean-distance sense. In turn, this is beneficial to the number of steps in the transition. Each intermediate clustering stays related to $s$ and $t$ through the convex construction of its site vector from $s$ and $t$ and some $\lambda \in (0,1)$. In particular, $\lambda$ can be used as a percentage to measure the current progress of the transition. 

Figure \ref{fig:fulltransition} shows a full transition between two LSAs, as computed using the methods in this paper. Each consecutive pair of clusterings differs only by a single exchange of items. Subfigures $(b)$ to $(e)$ follow a linear transition of the sites. (The values $\lambda=\frac{1}{2}$ and $\frac{2}{3}$ are rounded for a clean presentation.) The steps from subfigures $(a)$ to $(b)$ and from $(e)$ to $(f)$ retain the initial and target set of sites. These steps are a form of pre-processing to facilitate the linear transition of sites. We provide a high-level overview of our algorithms in Section \ref{sec:transition:overview}.
\begin{figure}
\centering
\subcaptionbox{Initial LSA (sites $s$)}{\includegraphics[width=0.4\textwidth]{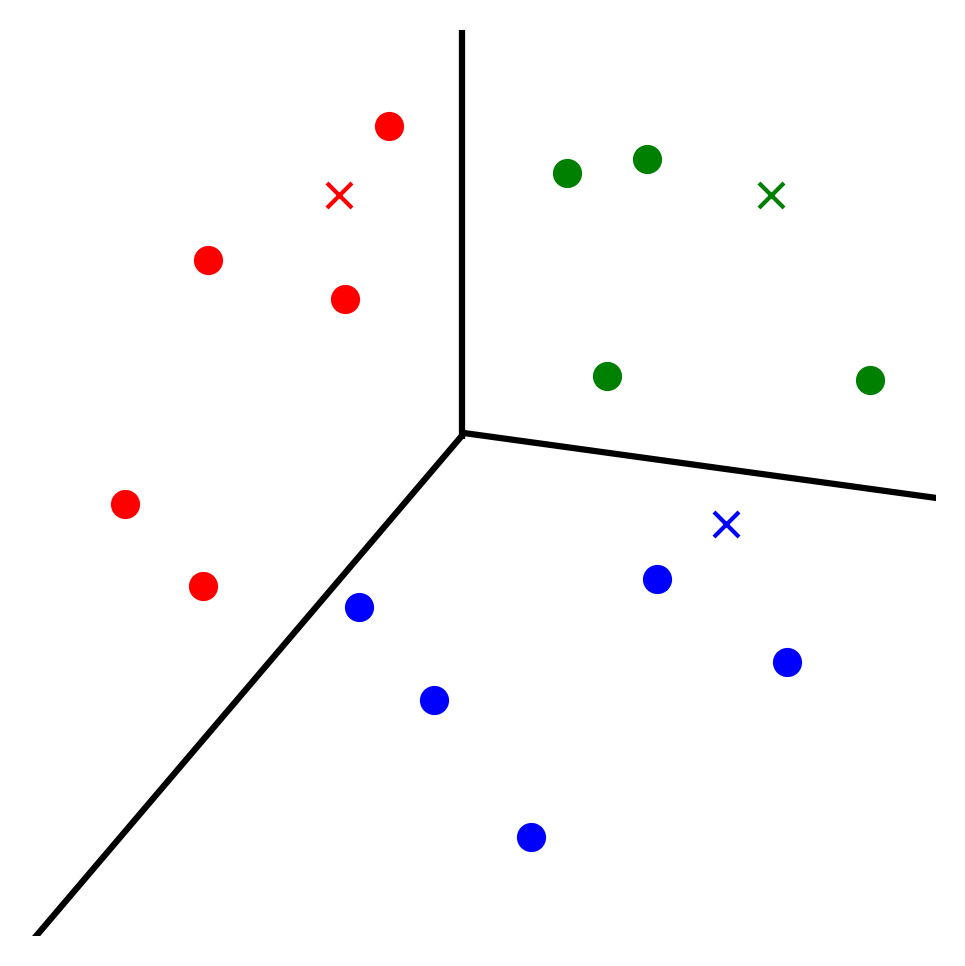} }%
\hfill
\subcaptionbox{Start of linear transition (sites $s$)}{\includegraphics[width=0.4\textwidth]{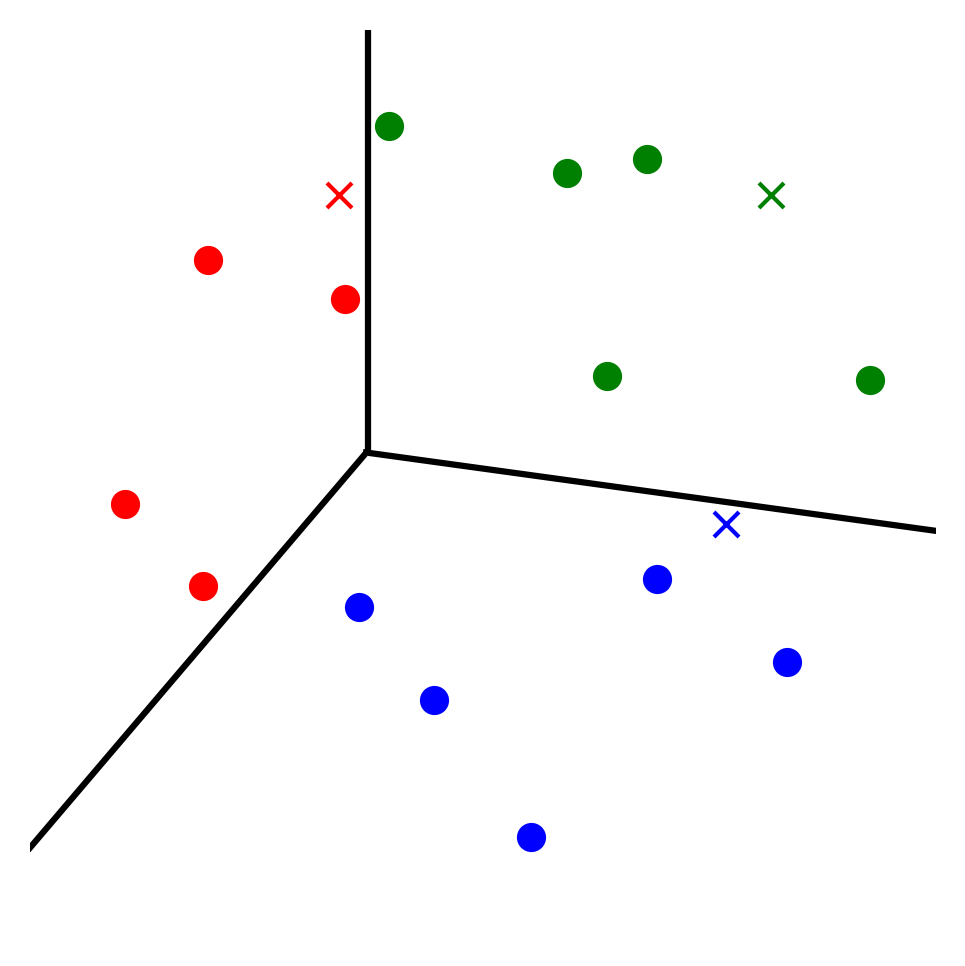}}%

\subcaptionbox{Step of linear transition \\\hspace*{0.66cm}(sites $(1-\frac{1}{2}) s +  \frac{1}{2} t$)}{\includegraphics[width=0.4\textwidth]{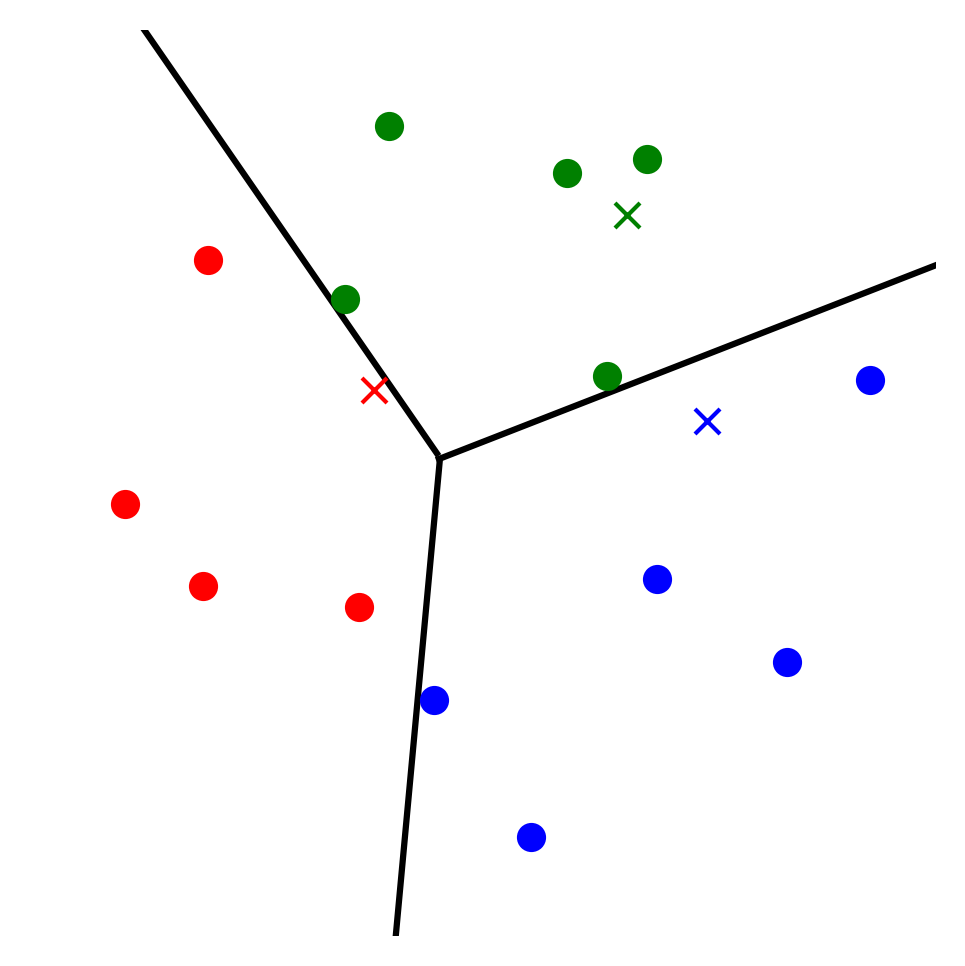} }%
\hfill
\subcaptionbox{Step of linear transition \\\hspace*{0.66cm}(sites $(1-\frac{2}{3}) s +  \frac{2}{3} t$)}{\includegraphics[width=0.4\textwidth]{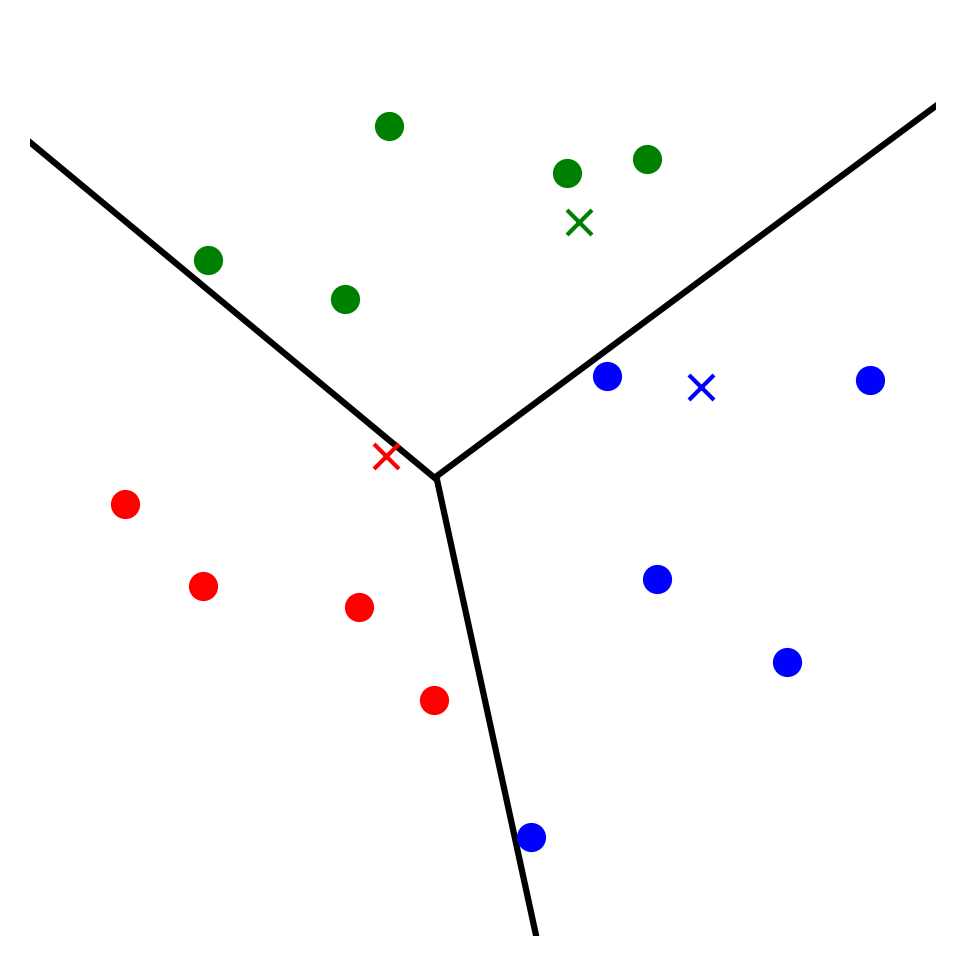}}%

\subcaptionbox{End of linear transition (sites $t$)}{\includegraphics[width=0.4\textwidth]{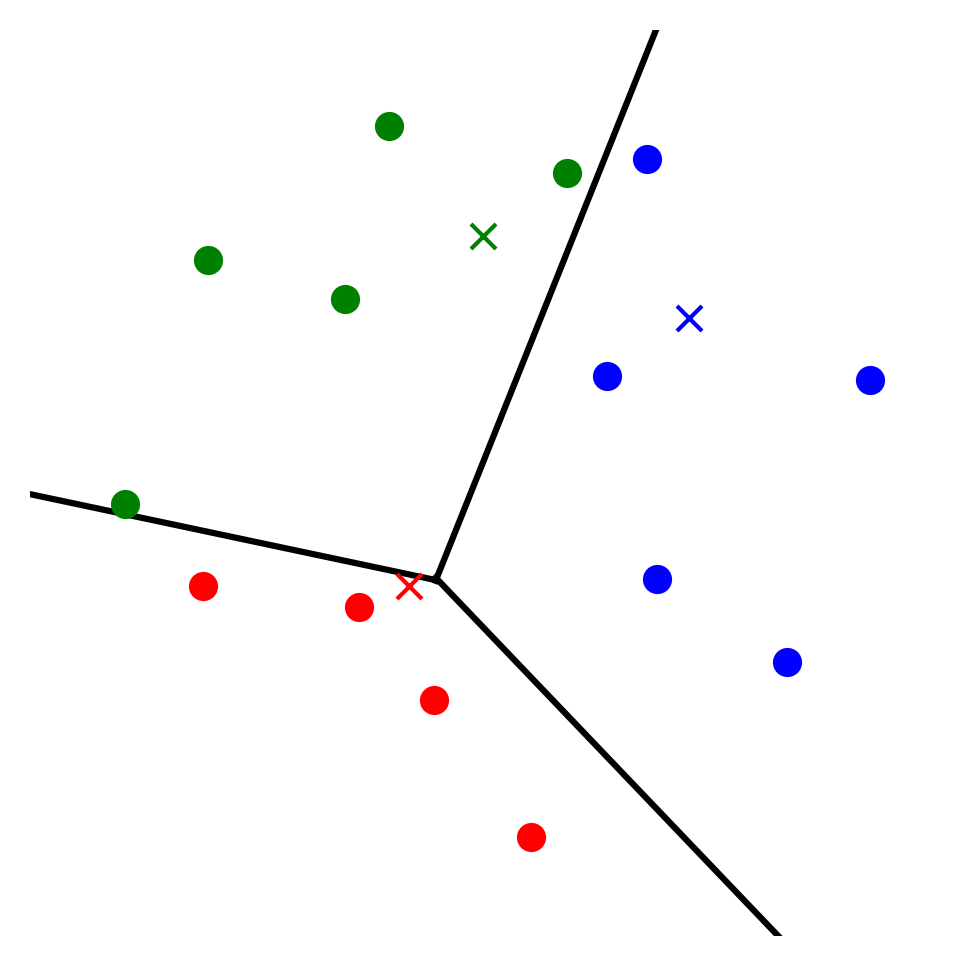}}%
\hfill
\subcaptionbox{Target LSA (sites $t$)}{\includegraphics[width=0.4\textwidth]{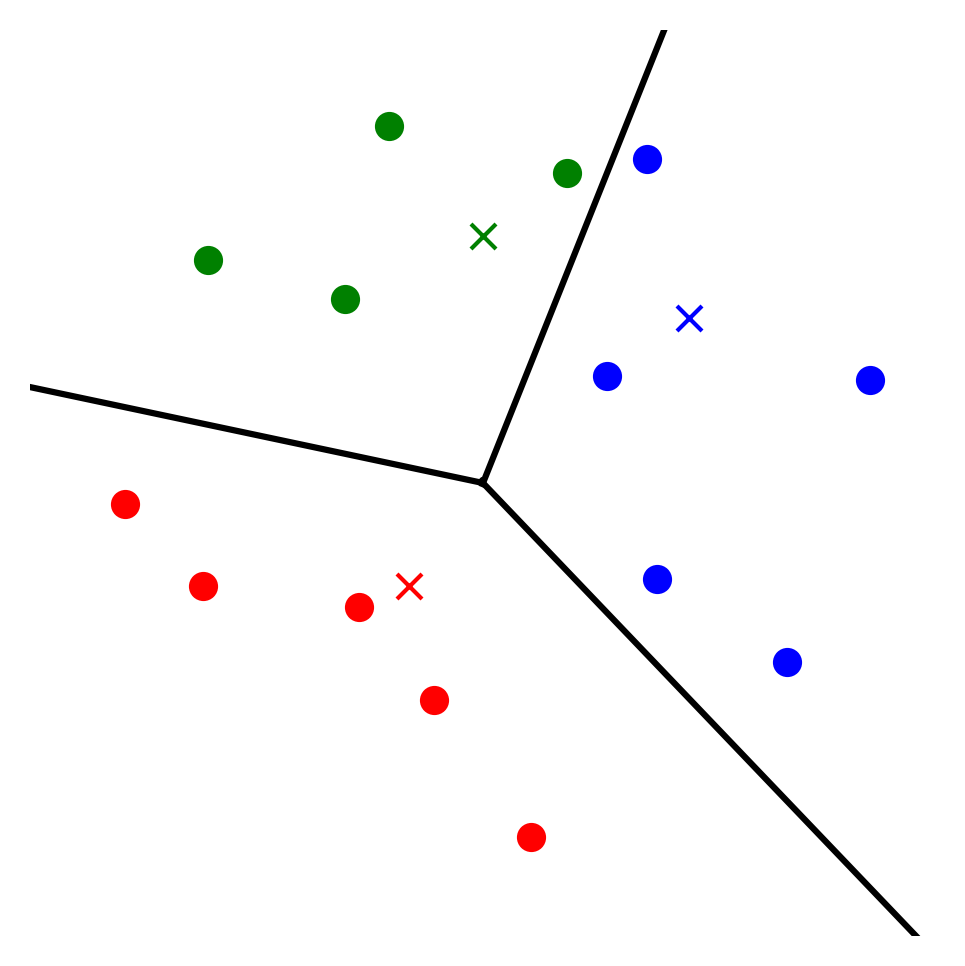}}%
\caption{A full transition from an initial LSA for sites $s$ to a target LSA for sites $t$. Consecutive clusterings differ by a single exchange. The transition may start and end with (one or more) exchanges between LSAs for the same sites. The main part of the transition follows a linear transition from sites $s$ to $t$.}
\label{fig:fulltransition}
\end{figure}


\subsection{Preserving Separability}\label{sec:upholdingseparability}

Linear programming and polyhedral theory provide several powerful tools to connect exchanges and the separability of clusterings. 
We make use of two closely connected special classes of polytopes to this end. We introduce them in Section \ref{sec:prelim:polytopes}. Section \ref{sec:prelim:separability} explains their relationship to separable clusterings; Section \ref{sec:prelim:exchanges} explains their relationship to cyclical and sequential exchanges.

\subsubsection{Bounded-Shape Partition and Transportation Polytopes}\label{sec:prelim:polytopes}

For a clustering $C$ and $i \in [k]$ and $j \in [n]$, one can introduce binary variables $y_{ij}$ that indicate whether $x_j \in C_i$ ($y_{ij} = 1$) or not ($y_{ij} = 0$).
The set of all feasible clusterings can then be described by the set of all integer vectors that satisfy the following constraints
\begin{subequations}\label{transport}
\begin{align}
&&\sum_{i = 1}^k y_{ij} &= 1 & \forall &\, j \in [n], \label{transport:partition}\\
&&\sum_{j = 1}^n y_{ij} &\geq \kappa^-_i & \forall &\, i \in [k],\label{transport:lowerbound} \\
&&\sum_{j = 1}^n y_{ij} &\leq \kappa^+_i & \forall &\, i \in [k], \label{transport:upperbound}\\
&&y_{ij} &\geq 0 & \forall &\, i \in [k], \, j \in [n]. \label{transport:nonnegative}
\end{align}
\end{subequations}

We refer to the feasible region~(\ref{transport}) as a \emph{transportation polytope}. (It shares its constraint matrix, with duplication of some of the rows, with classical transportation problems.) Constraint~(\ref{transport:partition}) ensures that each item is assigned to exactly one cluster. Constraints~(\ref{transport:lowerbound}) and~(\ref{transport:upperbound}) ensure that the clustering is feasible, i.e., each cluster $C_i$ is assigned between $\kappa^-_i$ and $\kappa^+_i$ items.
Note that the constraint matrix of~(\ref{transport}) is totally unimodular, and the right-hand side is integer.
Thus, the vertices of~(\ref{transport}) are integer, and due to Constraint~(\ref{transport:partition}) in $\{0,1\}^{n \cdot k}$. Every clustering corresponds to a vertex and vice versa. For simplicity, we identify the vertices with the encoded clusterings.

Let us provide a formal definition and notation for these polytopes, as well as for the closely connected class of \emph{partition polytopes}.

\begin{definition}[Bounded-Shape Polytopes]\label{def:boundedshape}
The feasible region~(\ref{transport}) is called the \emph{bounded-shape transportation polytope} and is denoted by $\mathcal{T}^{\pm}(X,k,\kappa^-,\kappa^+) = \mathcal{T}^{\pm}(\kappa^-,\kappa^+)$.

For a clustering $C = (C_1,\dots,C_k)$, let $w(C) \coloneqq (\sum_{x \in C_1} x,\dots,\sum_{x \in C_k} x) \in \mathbb{R}^{d \cdot k}$ be the \emph{clustering vector} of~$C$.
We call $\mathcal{P}^{\pm} (X,k,\kappa^{-},\kappa^{+}) = \mathcal{P}^{\pm}(\kappa^-,\kappa^+) \coloneqq \conv(\{w(C) \ | \ \kappa^- \leq \abs{C} \leq \kappa^+\})$ the \emph{bounded-shape partition polytope}.
\end{definition}

Informally, clustering vectors $w(C)$ are formed from $k$ vectors, one for each cluster $C_i$ summing up the locations in $\mathbb{R}^d$ of items in the cluster. Bounded-shape partition polytopes encode valuable information on the locations of items, in contrast to bounded-shape transportation polytopes which only encode item assignments to the clusters. When we write the data set into the columns of a matrix $M = (x_1,\dots,x_n)$, it is not hard to see that $\mathcal{P}^{\pm}$ is the image of $\mathcal{T}^{\pm}$ under the linear map that first transitions the vector $y \in \mathbb{R}^{n \cdot k}$ into an $(n \times k)$-matrix $Y$, multiplies this with~$M$, and rewrites the corresponding matrix $M\cdot Y$ into a column vector. Note that we may assume that the matrix $M$ has rank equal to~$d$. Otherwise, all items are contained in some affine subspace of $\mathbb{R}^d$, and we may restrict to this subspace.

Bounded-shape partition polytopes were studied in detail in the literature~\citep{BarnesHoffmanRothblum1992,HwangOnnRothblum1998,FukudaOnnRosta2003,BorgwardtHappach2019,BorgwardtViss2020}. There are two choices for $\kappa^{\pm}$ that are particularly well-understood, the \emph{single-shape} partition polytopes ~\citep{Borgwardt2010,BorgwardtHappach2019} and the \emph{all-shape} partition polytopes ~\citep{FukudaOnnRosta2003,BorgwardtHappach2019}. We define these terms.

\begin{definition}[Single-Shape and All-Shape Polytopes]\label{def:singleallshape}
For $\kappa^- = \kappa^+ = \kappa$, we call $\mathcal{P}^=(\kappa) = \mathcal{P}^{\pm}(\kappa^-,\kappa^+)$ the \emph{single-shape partition polytope} and $\mathcal{T}^{=}$ the corresponding \emph{single-shape transportation polytope}.
For $\kappa^- = (0,\dots,0)$ and $\kappa^+=(n,\dots,n)$, we call $\mathcal{P} = \mathcal{P}^{\pm}(\kappa^-,\kappa^+)$ the \emph{all-shape partition polytope} and $\mathcal{T}$ the corresponding \emph{all-shape transportation polytope}.
\end{definition}

We omit $\kappa^-$ and $\kappa^+$ and just write $\mathcal{T}^{\pm}$ or $\mathcal{P}^{\pm}$ if the cluster size bounds are clear from the context.

\subsubsection{Bounded-Shape Polytopes and Separability}\label{sec:prelim:separability}

There is a close connection between constrained LSAs and bounded-shape polytopes. As we will see, constrained LSAs are in one-to-one correspondence to vertices of single-shape partition polytopes, and all vertices (and possibly some other boundary points) of the bounded-shape partition polytopes $\mathcal{P}^{\pm}$ correspond to separable clusterings \citep{BarnesHoffmanRothblum1992,Borgwardt2010,BorgwardtHappach2019}. 

We begin by exhibiting the close connection between constrained LSAs and the single-shape transportation polytope~\citep{Borgwardt2010}.
Note that computing a constrained LSA to given sites $s_1,\dots,s_k \in \mathbb{R}^d$ is equivalent to minimizing $\sum_{i = 1}^k \sum_{j = 1}^n \norm{x_j - s_i}^2 \, y_{ij}$ over $\mathcal{T}^{\pm}$. If the cluster sizes are fixed, i.e., in the single-shape case, this is equivalent to linear optimization over the single-shape transportation polytope $\mathcal{T}^{=}$:
\begin{equation}\label{equivalence:singleshape}
\sum_{i = 1}^k \sum_{j = 1}^n \norm{x_j - s_i}^2 \, y_{ij} = \sum_{j = 1}^n \norm{x_j}^2 \underbrace{\sum_{i=1}^k y_{ij}}_{= 1 \, (\ref{transport:partition})} - 2 \sum_{i =1}^k \sum_{j = 1}^n x_j^T s_i \, y_{ij} + \sum_{i = 1}^k \norm{s_i}^2 \underbrace{\sum_{j=1}^n y_{ij}.}_{= \kappa_i \, (\ref{transport:lowerbound}), (\ref{transport:upperbound})}
\end{equation}
Thus, minimizing (\ref{equivalence:singleshape}) over all $y \in \mathcal{T}^=$ is equivalent to
\begin{equation}\label{equivalence:linearopt}
\min_{y \in \mathcal{T}^=} \underbrace{\sum_{j=1}^n \norm{x_j}^2 + \sum_{i = 1}^k \kappa_i \, \norm{s_i}^2}_{= \text{ constant}} - 2 \sum_{i =1}^k \sum_{j = 1}^n x_j^T s_i \, y_{ij} \ \Longleftrightarrow \ \max_{y \in \mathcal{T}^=} \sum_{i =1}^k \sum_{j = 1}^n x_j^T s_i \, y_{ij}.
\end{equation}
In the following, we write $c_{ij}(s) = x_j^T s_i$, so it becomes clear that (\ref{equivalence:linearopt}) maximizes the linear objective function $c(s) \in \mathbb{R}^{n \cdot k}$ over $\mathcal{T}^=$. (We will continue to use the notation $c(s)$ for such an objective function constructed from a site vector $s$ in the remainder of the paper.) Due to \begin{equation*}\sum_{i =1}^k \sum_{j = 1}^n x_j^T s_i \, y_{ij} = \sum_{i =1}^k s_i^T(\sum_{x_j \in C_i} x_j),\end{equation*} this is equivalent to a maximization of the linear objective function $s \in \mathbb{R}^{k \cdot d}$, the site vector itself, over $\mathcal{P}^=$. We obtain the following proposition.
\begin{proposition}[\cite{Borgwardt2010}]\label{prop:constrainedLSA}
A clustering $C$ is a constrained LSA to the site vector $s \in \mathbb{R}^{d \cdot k}$ if and only if it maximizes $c(s)$ over $\mathcal{T}^=$ or, equivalently, if it maximizes $s$ over $\mathcal{P}^=$. 
\end{proposition}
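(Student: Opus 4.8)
The plan is to chain together the algebraic reductions that the excerpt has already laid out and then invoke the Aurenhammer et al.\ correspondence to close the loop with power diagrams. Concretely, the proposition asserts the equivalence of three statements for a clustering $C$ and a site vector $s$: (i) $C$ is a constrained LSA to $s$; (ii) $C$ maximizes $c(s)$ over $\mathcal{T}^=$; (iii) $C$ maximizes $s$ over $\mathcal{P}^=$; together with the addendum that these hold iff a separating power diagram for $C$ with sites $s$ exists. I would prove (i)$\Leftrightarrow$(ii)$\Leftrightarrow$(iii) first, then append the power-diagram characterization.

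\textbf{Step 1: (i)$\Leftrightarrow$(ii).} By definition, $C$ is a constrained LSA to $s$ iff, among all clusterings with shape $\abs{C}$, it minimizes $\sum_{i=1}^k\sum_{x\in C_i}\norm{x-s_i}^2$. Identifying each such clustering with a $0/1$-vector $y\in\mathcal{T}^=$ (with $\kappa\coloneqq\abs{C}$), this objective equals $\sum_{i,j}\norm{x_j-s_i}^2 y_{ij}$. The computation in~(\ref{equivalence:singleshape})--(\ref{equivalence:linearopt}) of the excerpt expands this into a constant (depending only on $X$, $s$, and the fixed shape $\kappa$ via constraints~(\ref{transport:partition}) and~(\ref{transport:lowerbound})--(\ref{transport:upperbound})) plus the linear term $-2\sum_{i,j}x_j^Ts_i\,y_{ij}$. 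Hence minimizing over clusterings of shape $\kappa$ is the same as minimizing this linear term over the integer points of $\mathcal{T}^=$, i.e.\ maximizing $\sum_{i,j}c_{ij}(s)y_{ij}=c(s)^Ty$. Since $\mathcal{T}^=$ is a (totally unimodular, integral-right-hand-side) polytope whose vertices are exactly the clusterings of shape $\kappa$, a clustering maximizes $c(s)$ over all clusterings of shape $\kappa$ iff it maximizes $c(s)$ over the whole polytope $\mathcal{T}^=$. A subtlety to flag: the constant term in~(\ref{equivalence:linearopt}) involves $\kappa$, so one must observe that over $\mathcal{T}^=$ the shape is fixed and this does not affect the argmax — no comparison across shapes is being made.

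\textbf{Step 2: (ii)$\Leftrightarrow$(iii).} This is the linear projection $\mathcal{T}^=\to\mathcal{P}^=$, $y\mapsto M\cdot Y$ (reshaped), under which a vertex $y$ corresponding to $C$ maps to the clustering vector $w(C)$. Using the identity displayed just before the proposition, $\sum_{i,j}x_j^Ts_i\,y_{ij}=\sum_i s_i^T\bigl(\sum_{x_j\in C_i}x_j\bigr)=s^Tw(C)$, so the objective value of $C$ under $c(s)$ on $\mathcal{T}^=$ equals its objective value under $s$ on $\mathcal{P}^=$. Because every point of $\mathcal{P}^=$ is the image of some point of $\mathcal{T}^=$ with the same objective value (the projection is onto), $\max_{y\in\mathcal{T}^=}c(s)^Ty=\max_{z\in\mathcal{P}^=}s^Tz$, and $C$ attains the left max iff $w(C)$ attains the right one.

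\textbf{Step 3: the power-diagram addendum, and the main obstacle.} It remains to add ``$C$ is a constrained LSA to $s$ $\iff$ there is a separating power diagram w.r.t.\ $s$ for $C$.'' The forward direction is precisely the theorem of Aurenhammer, Hoffmann, and Aronov~\cite{AurenhammerHoffmannAronov1998} recalled in Section~\ref{sec:prelim:clustering}: if $C$ is a constrained LSA to $s$, then there exists a power diagram with site vector $s$ inducing $C$, i.e.\ with $C_i\subseteq P_i$ for all $i$, which by definition is a separating power diagram. The reverse direction is also stated there: if a power diagram with site vector $s$ satisfies $C_i\subseteq P_i$ for all $i$, then $C$ is a constrained LSA to $s$; one can also see this directly from~(\ref{powerdiagram:weights}) by summing the defining inequality $\norm{x-s_i}^2-w_i\le\norm{x-s_\ell}^2-w_\ell$ over $x\in C_i$ and over $i$, and noting that for any clustering $C'$ of the same shape the terms $\sum_i w_i\abs{C_i}=\sum_i w_i\abs{C_i'}$ cancel, yielding $\sum_i\sum_{x\in C_i}\norm{x-s_i}^2\le\sum_i\sum_{x\in C_i'}\norm{x-s_i}^2$. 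I expect the \textbf{main obstacle} to be exactly this last bookkeeping: one must be careful that the weight/dual terms cancel only because the shapes agree (this is what makes it a \emph{constrained} LSA, not an unconstrained one), and that the quantifier ``for all clusterings of the same shape'' is handled consistently across all three equivalences. Everything else is the routine algebra of~(\ref{equivalence:singleshape})--(\ref{equivalence:linearopt}) plus the standard fact that LP optima over a polytope are attained at vertices.
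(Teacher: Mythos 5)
Your proof is correct and follows essentially the same route the paper takes: the algebraic expansion in~(\ref{equivalence:singleshape})--(\ref{equivalence:linearopt}) for (i)$\Leftrightarrow$(ii), the identity $\sum_{i,j}x_j^Ts_i\,y_{ij}=s^Tw(C)$ for (ii)$\Leftrightarrow$(iii), and the Aurenhammer--Hoffmann--Aronov correspondence for the power-diagram statement (the paper cites this proposition to \cite{Borgwardt2010} and gives exactly this derivation inline just before stating it). Your explicit verification of the reverse power-diagram direction, summing the cell inequalities and cancelling the weight terms because the shapes agree, is a correct fleshing-out of what the paper leaves to the cited references.
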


Under mild assumptions, the vertices of a partition polytope $\mathcal{P}^=$ are in one-to-one correspondence to separable clusterings \citep{BarnesHoffmanRothblum1992,HwangOnnRothblum1998}. In fact, they allow a separating power diagram where no items are on the boundary of cells. Again, we identify the vertices with their encoded clusterings. 

Bounded-shape polytopes are the convex hull of the union over single-shape polytopes, i.e., $\mathcal{T}^{\pm}(\kappa^-,\kappa^+) = \conv(\bigcup_{\kappa^- \leq \kappa \leq \kappa^+} \mathcal{T}^=(\kappa))$ and $\mathcal{P}^{\pm}(\kappa^-,\kappa^+) = \conv(\bigcup_{\kappa^- \leq \kappa \leq \kappa^+} \mathcal{P}^=(\kappa))$. Thus, a vertex of a bounded-shape partition polytope is also a vertex of the single-shape partition polytope corresponding to the particular shape of the encoded clustering. This leads to the following proposition.

\begin{proposition}[\cite{BarnesHoffmanRothblum1992,BorgwardtHappach2019}]\label{prop:inducedclustering}
Let $C$ be a clustering $C$ and $s \in \mathbb{R}^{d \cdot k}$ be a site vector.  If the clustering vector $w(C)$ maximizes $s$ over $\mathcal{P}^{\pm}$, then $C$ is a constrained LSA to the site vector $s$.
\end{proposition}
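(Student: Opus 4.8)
The plan is to derive this directly from Proposition~\ref{prop:constrainedLSA} by restricting the maximization from the bounded-shape partition polytope to the single-shape partition polytope whose shape matches~$C$.

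First I would set $\kappa \coloneqq \abs{C}$, the shape of~$C$. Since we only consider feasible clusterings, $\kappa^-_i \leq \kappa_i \leq \kappa^+_i$ for all $i \in [k]$, so $\mathcal{P}^{=}(\kappa)$ is one of the single-shape partition polytopes appearing in the union in the identity $\mathcal{P}^{\pm}(\kappa^-,\kappa^+) = \conv\big(\bigcup_{\kappa^- \leq \kappa' \leq \kappa^+} \mathcal{P}^{=}(\kappa')\big)$ recalled above; in particular $\mathcal{P}^{=}(\kappa) \subseteq \mathcal{P}^{\pm}$. Moreover $C$ is a feasible clustering of shape exactly $\kappa$, so its clustering vector $w(C)$ lies in $\mathcal{P}^{=}(\kappa)$ (it is one of the points whose convex hull defines that polytope).

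Next comes the only substantive observation, a monotonicity-of-optima argument. By hypothesis $w(C)$ maximizes the linear functional $y \mapsto s^{T} y$ over $\mathcal{P}^{\pm}$. Because $w(C) \in \mathcal{P}^{=}(\kappa) \subseteq \mathcal{P}^{\pm}$, we have $s^{T} w(C) = \max_{y \in \mathcal{P}^{\pm}} s^{T} y \geq \max_{y \in \mathcal{P}^{=}(\kappa)} s^{T} y \geq s^{T} w(C)$, so equality holds throughout and $w(C)$ maximizes $s$ over $\mathcal{P}^{=}(\kappa)$. Finally I would invoke Proposition~\ref{prop:constrainedLSA} for the shape $\kappa$: since $w(C)$ maximizes $s$ over $\mathcal{P}^{=}(\kappa)$, the clustering $C$ is a constrained LSA to $s$, and there exists a separating power diagram with respect to sites $s$ for $C$, which is exactly the assertion.

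I do not expect a genuine obstacle: the proof is essentially the remark that the maximum of a linear function over a set is unchanged when passing to a subset that still contains an optimizer. The only point needing (slight) care is verifying that $\mathcal{P}^{=}(\kappa)$ really is one of the pieces whose union has convex hull $\mathcal{P}^{\pm}$, which is exactly where feasibility of $C$ (i.e.\ $\kappa^- \leq \abs{C} \leq \kappa^+$) enters, and explains why the hypothesis is stated for feasible clusterings only.
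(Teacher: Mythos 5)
Your proposal is correct and matches the reasoning the paper itself sketches in the paragraph preceding the proposition: since $\mathcal{P}^{\pm}$ is the convex hull of the union of the single-shape polytopes $\mathcal{P}^{=}(\kappa')$ over feasible shapes, a maximizer of $s$ over $\mathcal{P}^{\pm}$ that lies in $\mathcal{P}^{=}(\abs{C})$ also maximizes $s$ there, and Proposition~\ref{prop:constrainedLSA} then yields the claim. Your explicit subset-monotonicity argument is a clean (and slightly more general) write-up of that observation, correctly covering boundary maximizers rather than only vertices.
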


For a given site vector $\overline{s}$ there always exists a constrained LSA that corresponds to an optimal boundary point of $\mathcal{P}^{\pm}$ in direction of $\overline{s}$ \citep{Borgwardt2010,BriedenGritzmann2012,BorgwardtHappach2019} -- we call these {\em radial clusterings} with respect to (w.r.t.)~$\overline{s}$. However, note that Proposition~\ref{prop:inducedclustering} is not an `if and only if' relationship: a vertex of a single-shape partition polytope $\mathcal{P}^{=}$ (of a shape allowed by $\kappa^-,\kappa^+$) might not be a vertex of the bounded-shape partition polytope~$\mathcal{P}^{\pm}$. It may lie on the boundary of $\mathcal{P}^{\pm}$ or in its strict interior. The distinction between radial clusterings and general constrained LSAs will be important throughout this paper.

\subsubsection{Bounded-Shape Polytopes and Exchanges}\label{sec:prelim:exchanges}

The set of circuits or elementary vectors of bounded-shape polytopes corresponds to the desired sequential and cyclical exchanges of items \citep{bv-19a}. This implies that, under mild assumptions, two neighboring vertices of a bounded-shape partition polytope correspond to two separable clusterings that differ by just a single exchange of this type. Further, for any two separable clusterings that lie in the same face of $\mathcal{P}^{\pm}$, there exists a site vector for which the clusterings are equally good constrained LSAs. Equivalently, the clusterings allow a shared power diagram. Figure \ref{fig:SharedSepPD} showed an example.

Let us introduce some notation. We say that two clusterings are \emph{adjacent} in, e.g., $\mathcal{T}^{\pm}$ if the corresponding vertices share an edge in the transportation polytope. Further, we say $C_i$ is \emph{free} if $\kappa^-_i < \abs{C_i} < \kappa^+_i$. The following proposition summarizes the edge structure of $\mathcal{P}^{\pm}$ and $\mathcal{T}^{\pm}$.


\begin{proposition}[\cite{BorgwardtHappach2019,BorgwardtViss2020}]\label{prop:adjacentvertices}
Let $w(C),w(C')$ be two vertices of $\mathcal{P}^{\pm}$ that share an edge. Then the clustering difference graph $CDG(C,C')$ comprises a single path, a single cycle, or two parallel arcs only.

Two clusterings $C$ and $C'$ share an edge in $\mathcal{T}^{\pm}$ if and only if the clustering difference graph $CDG(C,C')$ consists of a single path in which no interior cluster is free or of a single cycle in which at most one cluster is free.
\end{proposition}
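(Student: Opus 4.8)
\noindent\emph{Proof idea.}
Both parts are taken from \cite{BorgwardtHappach2019,BorgwardtViss2020}, and I would recall the argument rather than redo it from scratch. The common engine is the standard edge criterion: two vertices $u,v$ of a polytope are adjacent exactly when the segment $[u,v]$ is a face, equivalently when the midpoint $\tfrac12(u+v)$ has no convex representation using vertices other than $u$ and $v$. I would specialise this to $\mathcal{T}^{\pm}$ as follows. The items assigned to different clusters by $C$ and $C'$ are precisely the arc labels of $CDG(C,C')$; any vertex of $\mathcal{T}^{\pm}$ occurring in a convex decomposition of $\tfrac12(y(C)+y(C'))$ must agree with $C$ (equivalently with $C'$) on every unmoved item, and on the moved items it corresponds to reassigning some subset $T$ of the CDG-arcs from their $C$-cluster to their $C'$-cluster. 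Such a \emph{partial exchange} automatically satisfies the partition constraints~(\ref{transport:partition}); it is \emph{feasible} iff it respects the size bounds, i.e.\ for every cluster $v$ one has $\kappa^-_v \le \abs{C_v} + d^-_T(v) - d^+_T(v) \le \kappa^+_v$, where $d^-_T(v)$ (resp.\ $d^+_T(v)$) counts the arcs of $T$ entering (resp.\ leaving) $v$. The whole question thus becomes: for which $CDG(C,C')$ do the empty partial exchange and the full partial exchange span an edge of the convex hull of all feasible partial exchanges?

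For the ``if'' direction I would analyse the feasible partial exchanges when $CDG(C,C')$ is a single path $v_0\to v_1\to\cdots\to v_m$ or a single cycle. In both cases every cluster involved has in-degree and out-degree at most one, so the feasibility inequality at an interior cluster $v_t$ (with in-arc $a_t$, out-arc $a_{t+1}$) collapses to a pairwise implication: if $v_t$ is tight at its lower bound it reads $a_{t+1}\in T\Rightarrow a_t\in T$, at its upper bound $a_t\in T\Rightarrow a_{t+1}\in T$, and if $v_t$ is free it imposes nothing (the endpoints $v_0,v_m$ of a path impose nothing either, because feasibility of $C'$ already forces $\abs{C_{v_0}}-1\ge\kappa^-_{v_0}$ and $\abs{C_{v_m}}+1\le\kappa^+_{v_m}$). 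Hence the feasible partial exchanges are exactly the down-sets (order ideals) of a partial order on the arc set whose comparability graph is the path $a_1-a_2-\cdots-a_m$ (resp.\ the analogous cycle) with the link at $v_t$ deleted whenever $v_t$ is free. The empty and full down-sets span an edge of the convex hull of all down-sets iff this comparability graph is connected, and for a single path this means no interior cluster is free, while for a single cycle it means at most one cluster is free — precisely the stated hypotheses. This proves adjacency.

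For the converse I would argue the contrapositive and, for each $CDG(C,C')$ not of the above form, exhibit a feasible partial exchange $T$ with $\emptyset\subsetneq T\subsetneq E(CDG(C,C'))$ whose complement is also feasible; this yields a third vertex in a decomposition of the midpoint, hence non-adjacency. If $CDG(C,C')$ is disconnected, take $T$ to be the arc set of one connected component $K$: reassigning $K$ sends every cluster met by $K$ to its $C'$-size and leaves all others at their $C$-size, so both $T$ and its complement are feasible by feasibility of $C$ and $C'$. If $CDG(C,C')$ is a single path with a free interior cluster, cut the path there; if it is a single cycle with two free clusters, cut the cycle between them. The remaining situation — $CDG(C,C')$ connected but with a cluster whose in- or out-degree is at least two (in particular two parallel arcs, or a directed path/cycle with a branch) — is the technical heart: here the feasibility constraints are genuine knapsack-type inequalities, a naive one-arc or prefix cut need not keep both sides within bounds, and one must select a suitable sub-walk of the CDG more carefully, peeling off a shortest directed sub-path or sub-cycle that is size-feasible together with its remainder, using integrality of the cluster sizes together with feasibility of $C'$. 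I expect this case to be the main obstacle, and it is exactly where the detailed ``free cluster'' bookkeeping of \cite{BorgwardtViss2020} is needed.

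Finally, for the first statement (the necessary condition for an edge of $\mathcal{P}^{\pm}$), I would transport the analysis through the linear projection $\mathcal{T}^{\pm}\to\mathcal{P}^{\pm}$, $y\mapsto M\cdot Y$, under which the $i$-th block of $w(C)-w(C')$ is $\sum_{x\in C_i}x-\sum_{x\in C'_i}x$ and telescopes along the arcs of $CDG(C,C')$. If $CDG(C,C')$ were anything other than a single path, a single cycle, or two parallel arcs, I would decompose it (e.g.\ split off a connected component, or branch at a cluster of degree at least three, or break a path/cycle at an interior vertex) and build clusterings $D,D'$ with $\set{D,D'}\neq\set{C,C'}$ and $w(D)+w(D')=w(C)+w(C')$, contradicting that $w(C)$ and $w(C')$ span an edge of $\mathcal{P}^{\pm}$. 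The conclusion here is only necessary and is weaker than in the transportation case (parallel arcs are permitted and no free-cluster condition appears), which reflects that the projection can collapse several edges or higher-dimensional faces of $\mathcal{T}^{\pm}$ into a single edge of $\mathcal{P}^{\pm}$.
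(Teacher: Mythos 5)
The paper does not prove Proposition~\ref{prop:adjacentvertices} --- it is imported verbatim from \cite{BorgwardtHappach2019,BorgwardtViss2020} --- so your reconstruction stands on its own. Your reduction of adjacency in $\mathcal{T}^{\pm}$ to the question of which arc subsets $T$ of $CDG(C,C')$ are feasible partial exchanges is the right frame, and the ``if'' direction via down-sets of the implication poset is correct (granting the standard edge criterion for order polytopes). The genuine gap is in the converse: you explicitly leave open the case of a connected CDG with a node of in- or out-degree at least two (which includes parallel arcs), calling it a knapsack-type obstacle and deferring to \cite{BorgwardtViss2020}. Since the ``only if'' half of the second statement asserts non-adjacency exactly in this case, the equivalence is not established as written.

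That case is in fact easier than you fear, and the tool is one you already deploy for disconnected CDGs: any partial exchange $T$ under which every cluster size lands in the interval $\bigl[\min\set{\abs{C_v},\abs{C'_v}},\max\set{\abs{C_v},\abs{C'_v}}\bigr]$ is automatically feasible, since both endpoints of that interval obey the bounds $\kappa^\pm_v$. If the connected CDG properly contains a directed cycle, take $T$ to be that cycle: it changes no cluster size (in-degree equals out-degree at every node it meets) and its complement produces exactly the sizes of $C'$, so both are feasible and the vertices are not adjacent. If it contains no directed cycle it is a DAG, and a connected DAG that is not a simple path splits into at least two arc-disjoint maximal directed paths; taking $T$ to be one of them moves each of its two endpoints one unit toward its $C'$-size (with the correct sign, because maximality forces every node to be a path-start only when $d^+>d^-$ and a path-end only when $d^-
>d^+$) and leaves all traversed nodes unchanged, so again $T$ and its complement are feasible. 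No knapsack reasoning is needed; the interval structure of the size constraints does all the work. A smaller loose end: in your sketch of the first ($\mathcal{P}^{\pm}$) statement, the constructed clusterings $D,D'$ with $w(D)+w(D')=w(C)+w(C')$ must also be shown to have clustering vectors distinct from $w(C),w(C')$, which is not automatic since distinct clusterings can project to the same clustering vector.
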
 

In particular, if two clusterings are adjacent in $\mathcal{T}^{\pm}$ then their CDG consists of a single path or cycle \citep{BorgwardtViss2020}, i.e., adjacent clusterings differ by a single cyclical or sequential exchange. This is a valuable tool for our purposes: the bounded-shape transportation polytopes have an explicit algebraic description through Formulation~(\ref{transport}). This allows us to adapt classical linear programming techniques to design part of the desired transition between separable clusterings.

\section{The Overall Transition}\label{sec:transition:overview}

Given two separable clusterings, an initial clustering $C^s$ and a target clustering $C^t$ with corresponding sites $s$ and $t$, our goal is to provide an algorithm that returns a sequence of separable clusterings representing a gradual transition from $C^s$ to $C^t$. The sequence corresponds to (constrained) LSAs that appear in a linear transition of the sites from $s$ to $t$. Two consecutive clusterings in this sequence differ only by a single exchange of items.

This section is dedicated to a description of how this overarching strategy can be realized through polyhedral theory and methods of linear programming. In addition to a sequence of LSAs $$C^1,\dots,C^\eta,$$ our algorithm also returns a sequence of power diagrams $$P^1,\overline{P}^2,P^2,\overline{P}^3,\dots,P^{\eta-1},\overline{P}^{\eta},P^\eta,$$ where $P^\nu$ (uniquely) induces $C^\nu$ from the sequence and $\overline{P}^\nu$ is a shared power diagram for $C^\nu$ and $C^{\nu-1}$, for all $1 \leq \nu \leq \eta$. The computation of the power diagrams is optional, as they are not required to compute the sequence of clusterings. However, we can find them at low computational effort and they provide valuable additional information: they explicitly represent how the separation is realized. In particular, the current power diagram serves as a classifier at the current step of the transition; a new item added to the data set would be added to the cluster of the cell it falls in.

Throughout the sequence, the site vectors follow a linear transition from sites $s$ to sites $t$, i.e., $(1-\lambda) s +  \lambda t$ for increasing $\lambda$. For $\lambda = 0$ or $1$, multiple clusterings for the same sites $s$ or $t$ may appear. The site vectors of LSAs $C^\nu$ and corresponding $P^\nu$ coincide. Consecutive clusterings $C^\nu$ and $C^{\nu-1}$ are both constrained LSAs for the site vector of $\overline{P}^\nu$, i.e., their objective function values with respect to these sites are equally good. 

We begin our discussion with the computation of the sequence of clusterings, in Section \ref{sec:radialtrans}. We discuss the computation of the associated power diagrams in Section~\ref{sec:powerdiagram}.

\subsection{Computation of a Sequence of Clusterings}\label{sec:radialtrans}

First, we exhibit how to perform a {\em transition between two radial clusterings}. Recall that radial clusterings correspond to boundary points of an underlying bounded-shape partition polytope $\mathcal{P}^{\pm}$; see Proposition \ref{prop:inducedclustering}. Due to the close connection of power diagrams and linear programming over bounded-shape partition and transportation polytopes, a linear transition of radial clusterings from sites $s$ to sites $t$ is related to sensitivity analysis and so-called ranging; see, e.g., \cite{VanderbeiBook}. In a simple form of ranging, one is interested in when and how an optimal basis or vertex of a linear program changes when altering the objective function vector slightly. The {\em `breakpoint'} when an optimal (degenerate) vertex, not only an optimal basis, changes can be computed through solving a linear program (LP); see Appendix \ref{app:ranging}. This leads to an algorithm to identify {\em when} (for which values of $\lambda$) and {\em how} (which items in the data set are moved) clusterings change throughout the transition. Geometrically, we construct a walk along the boundary of $\mathcal{P}^{\pm}$.

Let us assume that initial clustering $C^s$ and target clustering $C^t$ are radial clusterings. More precisely, we assume they satisfy Proposition \ref{prop:inducedclustering} with respect to the bounded-shape partition polytope $\mathcal{P}^{\pm}=\mathcal{P}^{\pm}(\theta^{-},\theta^{+})$ using cluster size bounds $\theta^-_i \leq \kappa^-_i = \min\set{\abs{C^s_i},\abs{C^t_i}}$ and $\theta^+_i\geq \kappa^+_i = \max\set{\abs{C^s_i},\abs{C^t_i}}$ for all $i \in [k]$, and we assume the clustering vectors $w(C^s)$ and $w(C^t)$ are on the boundary of $\mathcal{P}^{\pm}$ in direction of $s \in \mathbb{R}^{d \cdot k}$ and $t \in \mathbb{R}^{d \cdot k}$, respectively. Recall that not all constrained LSAs satisfy this property even if they are feasible w.r.t.~the given $\theta^\pm$.

The cluster sizes $\theta^\pm$ are chosen such that the shapes of $C^s$ and $C^t$ lie between the given bounds. They represent bounds on the shapes of any clusterings that can appear throughout the transition. For a simple notation, we describe our approach assuming $\theta^\pm=\kappa^\pm$ in the following. However, we would like to stress that a wider range of cluster sizes could be chosen and that this is not a technical restriction. In fact, Figure \ref{fig:fulltransition} showed an example in which the shapes of the intermediate clusterings are from a wider range. The choice of bounds $\kappa^\pm$ comes with the additional benefit of keeping the cluster sizes in the intermediate clusterings as similar as possible to $C^s$ and $C^t$; it is the smallest range of cluster sizes that allows for a transition. In Appendix \ref{app:radial}, we provide another reason why setting $\theta^\pm=\kappa^\pm$ is a good choice: the wider the range of feasible cluster sizes, the more `special' radial clusterings become compared to general constrained LSAs. 

When moving linearly from the site vector $s$ to $t$, we obtain a sequence of site vectors of the form $(1-\lambda)s + \lambda t \in \mathbb{R}^{d \cdot k}$ with $\lambda \in [0;1]$. For every $\lambda$, we get a clustering vector $w(C^\lambda)$ that maximizes the objective vector $ (1-\lambda)s + \lambda t$ over $\mathcal{P}^{\pm}$, where $C^0 = C^s$ and $C^1 = C^t$.
That is, the clustering $C^\lambda$ is separable and is induced by a power diagram with site vector $ (1-\lambda)s + \lambda t$.

Recall that an objective vector is maximized at a boundary point $x$ of a polytope $P$ if and only if it is contained in the normal cone $N_{P}(x) := \set{c \in \mathbb{R}^d \, | \, c^T x \geq c^T x' \ \forall \, x' \in P}$.
Let $C$ be a clustering that is induced by a power diagram with site vector $\overline s \in \mathbb{R}^{d \cdot k}$ and let $C'$ be any other clustering. Let $i(j),i'(j) \in [k]$ be the indices such that $x_j \in C_{i(j)}$ and $x_j \in C'_{i'(j)}$ for all $j \in [n]$.
Let $y,y' \in \set{0,1}^{n \cdot k}$ be the feasible solutions for the bounded-shape transportation polytope $\mathcal{T}^{\pm}$ that correspond to $C$ and $C'$, respectively.
We have the following equivalence:
\begin{equation*}
\begin{split}
\overline s \in N_{\mathcal{P}^{\pm}}(w(C)) &\Longleftrightarrow \ \overline s^T w(C) \geq \overline s^T w(C') \ \Longleftrightarrow \ \sum_{i=1}^k \overline s_i^T \bigg(\sum_{\substack{j\in [n] \\ i = i(j)}} x_j \bigg) \geq \sum_{i=1}^k \overline s_i^T \bigg(\sum_{\substack{j \in [n] \\ i=i'(j)}} x_j \bigg) \\
&\Longleftrightarrow \ \sum_{j=1}^n x_j^T \overline s_{i(j)} \geq \sum_{j=1}^n x_j^T \overline s_{i'(j)} \ \Longleftrightarrow \ \sum_{i=1}^k \sum_{j=1}^n c(\overline s)_{ij} y_{ij} \geq \sum_{i=1}^k \sum_{j=1}^n c(\overline s)_{ij} y'_{ij} \\
& \Longleftrightarrow \ c(\overline s)^T y \geq c(\overline s)^T y' \ \Longleftrightarrow \ c(\overline s) \in N_{\mathcal{T}^{\pm}}(y).
\end{split}
\end{equation*}

Hence, the clustering vectors $w(C^\lambda)$ are radial, i.e., on the boundary of $\mathcal{P}^{\pm}$, if and only if their 0/1 vectors in $\mathcal{T}^{\pm}$ are optimal vertices in direction of $c((1-\lambda)s + \lambda t)$. Thus, a linear transition from $s$ to $t$ w.r.t.~$\mathcal{P}^{\pm}$ is equivalent to a linear transition from $c(s)$ to $c(t)$ w.r.t.~$\mathcal{T}^{\pm}$. Instead of moving along the boundary of the bounded-shape partition polytope, we can move along the boundary of the bounded-shape transportation polytope. Recall that feasible clusterings are in one-to-one correspondence to the vertices of $\mathcal{T}^{\pm}$ and adjacent vertices differ by a single cyclical or sequential exchange (Proposition~\ref{prop:adjacentvertices}). This implies that we can perform the walk along the boundary of $\mathcal{T}^{\pm}$ as an {\em edge walk}. Further, note we have an explicit representation of $\mathcal{T}^{\pm}$ (Formulation (\ref{transport})), so we have all the necessary information for ranging for a current clustering; see Appendix \ref{app:ranging}. This allows for the design of an iterative scheme, similar to parametric optimization, that finds the values of $\lambda$ for which the current clustering changes during the transition $(1-\lambda)s + \lambda t$ (for increasing $\lambda$), as well as to identify the exchange of items by which two consecutive clusterings differ. 

These steps are summarized in \textsc{AlgoRadToRad}, see Algorithm~\ref{alg:rad-to-rad}, which is devised and discussed in Section~\ref{sec:rad-to-rad}. Geometrically, the algorithm returns a walk along the boundary of $\mathcal{P}^{\pm}$ (and an edge walk in $\mathcal{T}^{\pm}$) from an initial radial clustering $C^s_\text{rad}$ to a target radial clustering $C^t_\text{rad}$.


However, recall that not all LSAs are radial clusterings. Suppose that the clustering vector of the initial clustering $C^s$ or target clustering $C^t$ are not radial, i.e., they are not on the boundary of $\mathcal{P}^{\pm}$. For example, if $C^s$ is not radial w.r.t.~$s$, then the clustering $C^s$ is a constrained LSA, i.e., a vertex of the corresponding single-shape partition polytope $\mathcal{P}^=$, but the 0/1 vector of $C^s$ is not an optimum vertex of $\mathcal{T}^{\pm}$ in direction of $c(s)$, but only optimal over $\mathcal{T}^=$.

For such situations, we design a second algorithm for a {\em transition of a constrained LSA to a corresponding radial clustering} with respect to the same sites $s$ or $t$. As in the previous algorithm, each step of this transition is the application of a single exchange of items. Each intermediate clustering is a constrained LSA for the sites $s$ or $t$, respectively; only the number of items in the clusters changes. Because of this property, we call this a {\em fixed-size transition}. 

The computation of such a transition is described in \textsc{AlgoLSAtoRad}, see Algorithm~\ref{alg:init-to-rad}, which is devised and discussed in Section~\ref{sec:init-to-rad}. In particular, we show that all intermediate clusterings on the transition from $C^s$ to $C^s_\text{rad}$ are constrained LSAs w.r.t.~$s$ and can be constructed such that two consecutive clusterings differ by only a single sequential exchange. The same holds for the transition between $C^t$ and $C^t_\text{rad}$. Note that the transition from $C^t_\text{rad}$ to $C^t$ can be computed by applying \textsc{AlgoLSAtoRad} and reversing the order of the returned sequence of clusterings. We obtain a fixed-site transition for the first and final part of the overall transition.

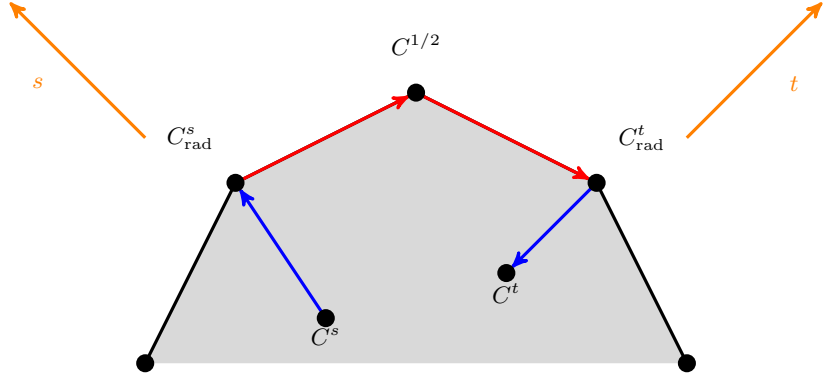
\begin{figure}
    \centering
    \begin{tikzpicture}[very thick,scale=1.2]
        \coordinate (v1) at (0,0);
        \coordinate (v2) at (1,2);
        \coordinate (v3) at (3,3);
        \coordinate (v4) at (5,2);
        \coordinate (v5) at (6,0);
        \coordinate (s) at (2,0.5);
        \coordinate (t) at (4,1);

        \fill[gray!30] (v1) -- (v2) -- (v3) -- (v4) -- (v5) -- cycle;
        \draw (v1) -- (v2) -- (v3) -- (v4) -- (v5);
        
        \draw[->,blue,shorten >=3pt,>=stealth'] (s) -- (v2);
        \draw[->,blue,shorten >=3pt,>=stealth'] (v4) -- (t);
        \draw[->,red,shorten >=3pt,>=stealth'] (v2) -- (v3);
        \draw[->,red,shorten >=3pt,>=stealth'] (v3) -- (v4);
        
        \foreach \i in {1,2,3,4,5}{\fill (v\i) circle (0.1);}
        \fill (s) circle (0.1) node[below] {$C^s$};
        \fill (t) circle (0.1) node[below] {$C^t$};
        \node[above left of=v2,xshift=3pt,yshift=-3pt] {$C^s_\text{rad}$};
        \node[above right of=v4,xshift=-3pt,yshift=-3pt] {$C^t_\text{rad}$};
        \node[above of=v3,yshift=-10pt]{$C^{1/2}$};
        
        \draw[->,orange,>=stealth'] (0,2.5) -- (-1.5,4) node[midway,xshift=-15pt,yshift=-5pt]{$s$};
        \draw[->,orange,>=stealth'] (6,2.5) -- (7.5,4) node[midway,xshift=15pt,yshift=-5pt]{$t$};
        
    \end{tikzpicture}
    \caption{Sketch of a transition from $C^s$ to $C^t$ over $\mathcal{P}^{\pm}$. Clustering $C^s$ is first transitioned into a radial clustering $C^s_\text{rad}$ for the same sites; then $C^s_\text{rad}$ is transitioned into $C^t_\text{rad}$, the radial clustering corresponding to $C^t$, through a walk along the boundary of the polytope; finally $C^t_\text{rad}$ is transitioned into $C^t$. The current site vectors stay fixed in the blue part of the transition, and change linearly from $s$ to $t$ in the red part.}
    \label{fig:lineartransition}
\end{figure}

A combination of these two algorithms allows for the computation of a transition of any constrained LSA to any other through a sequence of cyclical and sequential exchanges of items, while retaining separable clusterings throughout. A conceptual sketch of the overall transition from $C^s$ to $C^t$ is shown in Figure~\ref{fig:lineartransition}. The shaded area represents an underlying bounded-shape partition polytope $\mathcal{P}^\pm$, as well as two site vectors $s$ and $t$. Clusterings $C^s$ and $C^t$ are constrained LSAs for these site vectors, but not radial clusterings w.r.t.~$\mathcal{P}^\pm$. The transitions from the initial clustering $C^s$ to its radial clustering $C^s_\text{rad}$ and from the radial clustering $C^t_\text{rad}$ to the target clustering $C^t$ are depicted in blue. They begin or stop at the radial `counterparts' of $C^s$ and $C^t$, respectively. The transition between the two radial clusterings is depicted in red in Figure~\ref{fig:lineartransition}. The intermediate clustering named $C^{1/2}$ is radial w.r.t.~$\mathcal{P}^\pm$ for the site vector $\frac{1}{2}s+\frac{1}{2}t$.

Algorithm~\ref{alg:overalltransition} summarizes our approach. We formally state correctness of Algorithm~\ref{alg:overalltransition}, and the many favorable properties of the constructed walk, in the following theorem. 

\IncMargin{1em}
\begin{algorithm}[t]

	\Indm
	\KwIn{Initial and target constrained LSAs $C^s$ and $C^t$ w.r.t.~site vectors $s \in \mathbb{R}^{d \cdot k}$ and $t \in \mathbb{R}^{d \cdot k}$, respectively.}
	\KwOut{Sequence of constrained LSAs and sequence of corresponding power diagrams that satisfy the properties of Theorem \ref{thm:overalltransition}}
	\Indp
	
	\medskip
	
	Set $\kappa^-_i \leftarrow \min\set{\abs{C^s_i},\abs{C^t_i}}$ and $\kappa^+_i \leftarrow \max\set{\abs{C^s_i},\abs{C^t_i}}$ for all $i \in [k]$\;
	Call \textsc{AlgoLSAtoRad}($C^s,s,\kappa^-,\kappa^+$) and let $(C^{s,0},C^{s,1},\dots,C^{s,p})$ be the returned sequence of clusterings where $C^{s,0} = C^s$ and let $(P^{s,0},\overline{P}^{s,1},\dots,\overline{P}^{s,p},P^{s,p})$ be the returned sequence of power diagrams; set $C^s_\text{rad} \leftarrow C^{s,p}$\;
	
	Call \textsc{AlgoLSAtoRad}($C^t,t,\kappa^-,\kappa^+$) and let $(C^{t,0},C^{t,1},\dots,C^{t,q})$ be the returned sequence of clusterings where $C^{t,0} = C^t$ and let $(P^{t,0},\overline{P}^{t,1},\dots,\overline{P}^{t,q},P^{t,q})$ be the returned sequence of power diagrams; set $C^t_\text{rad} \leftarrow C^{t,q}$\;
	
	Call \textsc{AlgoRadToRad}($C^s_\text{rad},C^t_\text{rad},s,t,\kappa^-,\kappa^+$) and let $(C^{\lambda_0},C^{\lambda_1},\dots,C^{\lambda_m})$ be the returned sequence of clusterings where $C^{\lambda_0} = C^s_\text{rad}$ and $C^{\lambda_m} = C^t_\text{rad}$ and let $(\overline{P}^{\lambda_1},P^{\lambda_1},\dots,\overline{P}^{\lambda_{m-1}},P^{\lambda_{m-1}},\overline{P}^{\lambda_m})$ be the returned sequence of power diagrams\;
	
    \Return sequence of clusterings\vspace*{-0.25cm} $$(C^{s,0},C^{s,1},\dots,C^{s,p},C^{\lambda_1},\dots,C^{\lambda_m},C^{t,q-1},\dots,C^{t,1},C^{t,0})\vspace*{-0.25cm}$$ \hspace*{1.18cm}and sequence of power diagrams\vspace*{-0.25cm} $$(P^{s,0},\overline{P}^{s,1},\dots,\overline{P}^{s,p},P^{s,p},\overline{P}^{\lambda_1},\dots,P^{\lambda_{m-1}},\overline{P}^{\lambda_m},P^{t,q},\overline{P}^{t,q},P^{t,q-1},\dots,\overline{P}^{t,1},P^{t,0})\;$$
   
\caption{Linear transition from initial LSA~$C^s$ to target LSA~$C^t$.}
\label{alg:overalltransition}
\end{algorithm}
\DecMargin{1em}

\begin{theorem}\label{thm:overalltransition}
Let $C^s$ and $C^t$ be two constrained LSAs with site vectors $s$ and $t$. Algorithm~\ref{alg:overalltransition} returns a sequence of clusterings $$(C^{s,0},C^{s,1},\dots,C^{s,p},C^{\lambda_1},\dots,C^{\lambda_m},C^{t,q-1},\dots,C^{t,1},C^{t,0})$$ and a sequence of power diagrams $$(P^{s,0},\overline{P}^{s,1},\dots,\overline{P}^{s,p},P^{s,p},\overline{P}^{\lambda_1},\dots,P^{\lambda_{m-1}},\overline{P}^{\lambda_m},P^{t,q},\overline{P}^{t,q},P^{t,q-1},\dots,\overline{P}^{t,1},P^{t,0})$$
that satisfy the following properties:

\begin{enumerate}
    \item $C^{s,0}=C^s$, $C^{t,0}=C^t$
    \item all clusterings are constrained LSAs (and thus separable) 
    \item all clusterings $C$ have cluster sizes $|C_i|$ satisfying $\kappa_i^-:=\min \{|C_i^s|, |C^t_i|\} \leq |C_i| \leq \kappa_i^+:= \max \{|C_i^s|, |C^t_i|\}$  for all $i\leq k$ 
    \item consecutive clusterings differ by a single cyclical or sequential exchange of items
    \item The power diagrams in the sequence satisfy:\begin{itemize}\item $P^{s,i}$ is a separating power diagram for sites $s$ for $C^{s,i}$ \item $P^{\lambda_i}$ is a separating power diagram for $C^{\lambda_i}$ \item $P^{t,i}$ is a separating power diagram for sites $t$ for $C^{t,i}$\end{itemize} 
    \item The clusterings in the sequence satisfy:\begin{itemize}\item for consecutive clusterings $C^{s,i-1}$, $C^{s,i}$, $\overline{P}^{i}$ is a shared power diagram for sites $s$\item for consecutive clusterings $C^{\lambda_i}, C^{\lambda_{i+1}}$,  $\overline{P}^{\lambda_i}$ is a shared power diagram for sites $\small (1{-}\lambda_i) s{+}\lambda_i t $\item for consecutive clusterings $C^{t,i}$, $C^{t,i-1}$, $\overline{P}^{i}$ is a shared power diagram for sites $t$\end{itemize}
    \item $C^s_{\text{rad}}=C^{s,p}=C^{\lambda_0}, C^{\lambda_1},\dots,C^{\lambda_m}=C^{t,q}=C^t_{\text{rad}}$ are radial clusterings for sites $(1-\lambda_i) s +\lambda_i t $ for all $i\geq 0$ w.r.t.~$\mathcal{P}^{\pm}(\kappa^-,\kappa^+)$ and $\;\;\mathcal{T}^{\pm}(\kappa^-,\kappa^+)$ (with $\kappa^\pm$ as defined in 3.) 
    \item $C^{s,0},C^{s,1},\dots,C^{s,p}$ are constrained LSAs for sites $s$
    \item $C^{t,q-1},\dots,C^{t,1},C^{t,0}$ are constrained LSAs for sites $t$
    \item the shapes $|C^{s,i}|$ of $C^{s,0},C^{s,1},\dots,C^{s,p}$ are all distinct; the number of clusterings in this part of the sequence is bounded by the number of shapes
    \item the shapes $|C^{t,q}|$ of $C^{t,q},C^{t,q-1},\dots,C^{t,0}$ are all distinct; the number of clusterings in this part of the sequence is bounded by the number of shapes.
    \end{enumerate}
\end{theorem}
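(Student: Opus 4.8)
The plan is to prove Theorem~\ref{thm:overalltransition} by a modular composition argument: essentially every listed property is inherited directly from the correctness statements of the two subroutines \textsc{AlgoInitToRad} (Section~\ref{sec:init-to-rad}) and \textsc{AlgoRadToRad} (Section~\ref{sec:rad-to-rad}), and the only genuine work is to verify that the three phases glue together consistently at the two junction clusterings $C^s_\text{rad}$ and $C^t_\text{rad}$. First I would fix the cluster-size bounds $\kappa^-_i = \min\{|C^s_i|,|C^t_i|\}$ and $\kappa^+_i = \max\{|C^s_i|,|C^t_i|\}$ once and for all, observe that $C^s$ and $C^t$ are trivially feasible for these bounds, and note that all three subroutine calls use the \emph{same} $\kappa^{\pm}$, so that ``feasible'' and ``radial'' refer to the fixed polytopes $\mathcal{T}^{\pm}(\kappa^-,\kappa^+)$ and $\mathcal{P}^{\pm}(\kappa^-,\kappa^+)$ throughout; this already gives property~3.

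Next I would invoke the guarantee of \textsc{AlgoInitToRad}$(C^s,s,\kappa^-,\kappa^+)$ to be proved in Section~\ref{sec:init-to-rad}: it returns a sequence $C^{s,0}=C^s,\dots,C^{s,p}=C^s_\text{rad}$ of constrained LSAs for sites $s$ in which consecutive clusterings differ by a single sequential exchange, the shapes $|C^{s,i}|$ are pairwise distinct (so the number of clusterings in this branch is at most the number of feasible shapes), $C^s_\text{rad}$ is radial w.r.t.~$s$, and the accompanying $P^{s,i}$ are separating power diagrams for $C^{s,i}$ while the $\overline{P}^{s,i}$ are shared power diagrams for $C^{s,i-1},C^{s,i}$ (all w.r.t.~$s$); this yields properties~8 and~10 and the $s$-parts of~5 and~6. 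Applying the same subroutine to $(C^t,t,\kappa^-,\kappa^+)$ and then \emph{reversing} both returned sequences yields properties~9 and~11 and the $t$-parts of~5 and~6, since being a separating (respectively shared) power diagram is a property of the individual clusterings, not of their order in the list.

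I would then feed $C^s_\text{rad}$ and $C^t_\text{rad}$ into \textsc{AlgoRadToRad}$(C^s_\text{rad},C^t_\text{rad},s,t,\kappa^-,\kappa^+)$; here the key compatibility check is that the precondition of \textsc{AlgoRadToRad} — that its two input clusterings be radial w.r.t.~$s$ and $t$, respectively — is exactly what the previous step guarantees. Its output sequence $C^{\lambda_0}=C^s_\text{rad},\dots,C^{\lambda_m}=C^t_\text{rad}$ consists of clusterings that are radial w.r.t.~$(1-\lambda_i)s+\lambda_i t$ with consecutive ones differing by a single cyclical or sequential exchange, together with power diagrams $P^{\lambda_i}$ and $\overline{P}^{\lambda_i}$ as in properties~5 and~6; combining radiality with the equivalence $\overline{s}\in N_{\mathcal{P}^{\pm}}(w(C))\Leftrightarrow c(\overline{s})\in N_{\mathcal{T}^{\pm}}(y)$ derived in Section~\ref{sec:transition:overview} gives property~7. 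Finally I would assemble the output exactly as in the \textbf{Return} line of Algorithm~\ref{alg:overalltransition} and check the gluing: since $C^{s,p}=C^s_\text{rad}=C^{\lambda_0}$ and $C^{\lambda_m}=C^t_\text{rad}=C^{t,q}$, each junction vertex is written only once, so every step ``across a junction'' coincides with a genuine step of one of the subroutines and hence is a single exchange (property~4), while in the power-diagram list the absent $P^{\lambda_0}$ slot is filled by $P^{s,p}$ and the $P^{\lambda_m}$ slot by $P^{t,q}$, which is legitimate since $P^{s,p}$ induces $C^{\lambda_0}=C^{s,p}$ for sites $s=(1-\lambda_0)s+\lambda_0 t$ and $P^{t,q}$ induces $C^{\lambda_m}=C^{t,q}$ for sites $t=(1-\lambda_m)s+\lambda_m t$; together with property~7 and Proposition~\ref{prop:inducedclustering} (a radial clustering is a constrained LSA, hence separable) this gives property~2, and property~1 is immediate by construction.

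I expect the main obstacle to be organizational rather than mathematical: the correctness statements for \textsc{AlgoInitToRad} and \textsc{AlgoRadToRad} must be formulated precisely enough that the clustering $C^s_\text{rad}$ produced by the first is literally a valid input for the second, and the index bookkeeping — the interleaving of the $P^\nu$ and $\overline{P}^\nu$ sequences, the identification of the terminal/initial power diagrams across junctions, and the reversal of the $C^t$-branch — must be matched exactly to the sequences written in Algorithm~\ref{alg:overalltransition}. No new polyhedral arguments are needed at this level; all the geometric content is deferred to the two section-specific theorems.
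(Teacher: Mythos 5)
Your proposal is correct and follows essentially the same route as the paper: the paper's own proof is exactly this modular composition argument, deferring all substantive content to the correctness theorems for \textsc{AlgoInitToRad} and \textsc{AlgoRadToRad} and then matching the eleven claims to the properties guaranteed by those two theorems. Your additional care at the junctions (checking the precondition of \textsc{AlgoRadToRad}, the single writing of $C^s_\text{rad}$ and $C^t_\text{rad}$, and the reversal of the $C^t$-branch) is consistent with, and slightly more explicit than, the paper's treatment.
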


\begin{proof}{Proof.}
 Note that Algorithm~\ref{alg:overalltransition} essentially consists of two calls to \textsc{AlgoLSAtoRad} (lines $2$ and $3$) and a call to \textsc{AlgoRadToRad} (line $4$). The two calls to \textsc{AlgoLSAtoRad} provide a transition of $C^s$ into radial $C^s_\text{rad}$ and $C^t$ into radial $C^t_\text{rad}$. The call to \textsc{AlgoRadToRad} provides a transition of $C^s_\text{rad}$ to $C^t_\text{rad}$. Along with these sequences of clusterings, a corresponding sequence of power diagrams is computed. The returned walk is a concatenation of the sequence of clusterings from $C^s$ to $C^s_\text{rad}$ (line $2$), from $C^s_\text{rad}$ to $C^t_\text{rad}$ (line 4), and from $C^t_\text{rad}$ to $C^t$ (line 3). The latter is a simple reversal of the transition from $C^t$ into radial $C^t_\text{rad}$ computed in line $3$. The order for the calls to the algorithms, in particular the call to \textsc{AlgoLSAtoRad} in line $3$ before the call to \textsc{AlgoRadToRad} in line $4$, is based on the fact that \textsc{AlgoRadToRad} requires $C^t_\text{rad}$ as input, and $C^t_\text{rad}$ is found as a part of the earlier run of \textsc{AlgoLSAtoRad}. 

The claimed properties of the constructed sequence of clusterings are now a direct consequence of correctness, and precisely these properties, of the three parts of the walk. We prove correctness of \textsc{AlgoLSAtoRad} in Theorem \ref{thm:init-to-rad} and correctness of \textsc{AlgoRadToRad} in Theorem \ref{thm:rad-to-rad}. 

Claim 1 (in this theorem) corresponds to property 1 in Theorem \ref{thm:init-to-rad}. Claims 2 to 6 follow from properties 2 to 6, respectively, in Theorems \ref{thm:init-to-rad} and \ref{thm:rad-to-rad}. Claim 7 follows from property 1 in Theorem \ref{thm:init-to-rad} and property 2 in Theorem \ref{thm:init-to-rad}. Claims 8 and 9 correspond to property 2 in Theorem \ref{thm:init-to-rad}, and claims 10 and 11 correspond to property 7 in Theorem \ref{thm:init-to-rad}.
\hfill
\qed 
\end{proof}

Some of the steps in Algorithm~\ref{alg:overalltransition} can be simplified if the cluster sizes of the initial and target clustering coincide. Then $\kappa^- = \kappa^+$ and Proposition~\ref{prop:constrainedLSA} yields that a clustering is an LSA w.r.t.~a site vector $a \in \mathbb{R}^{d \cdot k}$ if and only if it is radial w.r.t.~$a$. Thus $C^s = C^s_\text{rad}$ and $C^t = C^t_\text{rad}$, and one can omit \textsc{AlgoLSAtoRad}.

\subsection{Computation of Separating Power Diagrams}\label{sec:powerdiagram}

In addition to the construction of a sequence of clusterings, we construct a sequence of corresponding separating power diagrams. Let $\nu$ be an iterator for the sequence of clusterings. The power diagrams are of two types: power diagrams $P^\nu=P^{s,i},P^{\lambda_i},P^{t,i}$ are supposed to be `good' power diagrams inducing clustering $C^\nu=C^{s,i},C^{\lambda_i},C^{t,i}$ in the sequence (and no other clusterings from the sequence); power diagrams $\overline{P}^\nu=\overline{P}^i,\overline{P}^{\lambda_i}$ are shared for two consecutive clusterings $C^{\nu - 1}$ and $C^\nu$, inducing both $C^{s,i-1}, C^{s,i}$ or  $C^{t,i}$, $C^{t,i-1}$, or $C^{\lambda_i}, C^{\lambda_{i+1}}$, depending on the part of the sequence. In this section, we discuss the computation of these power diagrams.

To this end, recall from Proposition~\ref{prop:inducedclustering} that for every clustering $C$ with clustering vector $w(C)$ on the boundary of $\mathcal{P}^{\pm}$ or $\mathcal{P}^=$, and every vector $\overline{s} \in \mathbb{R}^{d \cdot k}$ that is contained in the normal cone at $w(C)$, there exists a power diagram for site vector $\overline{s}$ that induces the clustering $C$.
The scalars $\gamma_1,\dots,\gamma_k$ in Definition~\ref{def:powerdiagram} only depend on $X$ and $\overline{s}$ and can be constructed for any $\overline{s}$ in the normal cone~\citep{BarnesHoffmanRothblum1992,Borgwardt2010,BriedenGritzmann2012}.

Let us first discuss the construction of a good power diagram $P^\nu$ that induces clustering $C^\nu$ in the sequence: we want a good site vector, and good positions of the separating hyperplanes. First, note that we are given a site vector in the normal cone for every clustering $C^\nu$. For the clusterings $C^{s,0},\dots,C^{s,p}$ and $C^{t,q},\dots,C^{t,0}$ (blue arcs in Figure~\ref{fig:lineartransition}) that are returned by \textsc{AlgoLSAtoRad}, we know that $s$ and $t$ are contained in the normal cone of the clustering vector in the respective single-shape partition polytopes. For the clusterings $C^{\lambda_r}$ ($1 \leq r \leq m$) along the linear transition $(1-\lambda) s + \lambda t $ (red arcs in Figure~\ref{fig:lineartransition}), the value $\lambda_r \in [0;1]$ is chosen such that $s^{\lambda_r} =  (1 - \lambda_r) s + \lambda_r t$ is contained in the intersection of the normal cones of $w(C^{\lambda_{r-1}})$ and $w(C^{\lambda_r})$, as we will see in \textsc{AlgoRadToRad} and Section~\ref{sec:rad-to-rad}.
Hence, a natural choice of a site vector that is in the normal cone at $w(C^{\lambda_r})$, but not in the normal cone of the previous or next clustering vector, is a convex combination of $s$ and $t$ in the form $\frac{1}{2}(s^{\lambda_r} + s^{\lambda_{r+1}})$ for $1 \leq r < m$. Note that this choice of site vector for $C^{\lambda_r}$ implies that the constructed power diagram uniquely induces $C^{\lambda_r}$ from the sequence. (The clustering vectors of the first and last clustering of this sequence $C^{\lambda_0}= C^s_\text{rad}$ and $C^{\lambda_m} = C^t_\text{rad}$ contain $s$ and $t$ in their normal cone by construction, respectively.) Summing up, there is a natural choice for the site vector for each power diagram $P^\nu$. These follow a linear transition from $s$ to $t$. Second, for given sites, we also want good positions of the separating hyperplanes in the underlying space. 
A common approach is to maximize the so-called \emph{margin}, i.e., maximize the minimum Euclidean distance of any item to the boundary of its cell; see, e.g., \cite{bm-92,Borgwardt2015}. 

It remains to address the actual computation of a (maximum-margin) power diagram for a given site vector. Given a clustering $C$ and a site vector $\overline{s} = (\overline{s}_1,\dots,\overline{s}_k) \in N_{\mathcal{P}^=(\abs{C})}(w(C))$, we can maximize the margin $\varepsilon$ of a power diagram with site vector $\overline{s}$ by solving the following LP \citep{bm-92,Borgwardt2015}:
\begin{subequations}\label{LP:maximizemargin}
\begin{align}
& \max & \varepsilon & \span \span \label{LP:maximizemargin:objective}\\
& \text{s.t.} & \left(\frac{\overline{s}_\ell - \overline{s}_i}{\|\overline{s}_\ell - \overline{s}_i\|}\right)^T x_j + \varepsilon &\leq \frac{\gamma_\ell - \gamma_i}{\|\overline{s}_\ell - \overline{s}_i\|} & \forall & \, i \in [k], \forall \, \ell \in [k] \setminus \{i\}, \forall \, x_j \in C_i, \label{LP:maximizemargin:feasible} \\
\span & \gamma_1 & = 0,\\
\span & \gamma_i &\in \mathbb{R} & \forall & \, i \in [k], \label{LP:maximizemargin:gamma}\\
\span & \varepsilon &\geq 0. \label{LP:maximizemargin:epsilon}
\end{align}
\end{subequations}

A notable feature of this LP is the constraint $\gamma_1 =0$; without it, whenever the system has a feasible solution, it would also contain a one-dimensional lineality space. LP~(\ref{LP:maximizemargin}) is guaranteed to have a feasible solution if and only if there exists a power diagram with site vector $\overline{s}$ that induces~$C$. If $\varepsilon > 0$ no item is on the boundary of the cell of the resulting power diagram. In fact, because of the scaling by $\|\overline{s}_\ell - \overline{s}_i\|^{-1}$ in~(\ref{LP:maximizemargin:feasible}), $\epsilon$ corresponds to the minimal {\em Euclidean} distance of an item to the boundary of its cell. We can simplify the LP if we first compute $\max_{x_j \in C_i} (\overline{s}_\ell - \overline{s}_i)^T x_j$ for all $i \in [k] $ and $\ell \in [k] \setminus \{i\}$, and use this term in the constraints~(\ref{LP:maximizemargin:feasible}). 
The resulting LP takes the smaller and simpler form

\begin{subequations}\label{LP:computeweights}
\begin{align}
& \max &  \varepsilon & \span \span \label{LP:computeweights:objective}\\
& \text{s.t.} & \left(\frac{\max_{x_j \in C_i} (\overline{s}_\ell - \overline{s}_i)}{\|\overline{s}_\ell - \overline{s}_i\|}\right)^T x_j + \varepsilon &\leq \frac{\gamma_\ell - \gamma_i}{\|\overline{s}_\ell - \overline{s}_i\|} & \forall & \, i \in [k], \forall \, \ell \in [k] \setminus \{i\}, \label{LP:computeweights:feasible}\\
\span & \gamma_1 & = 0, \label{LP:computeweights:normalize}\\
\span & \gamma_i &\in \mathbb{R} & \forall & \, i \in [k], \label{LP:computeweights:weights}\\
\span & \varepsilon &\geq 0. \label{LP:computeweights:epsilon}
\end{align}
\end{subequations}

Note that LP~(\ref{LP:computeweights}) has only $k$ variables and less than $k^2$ main constraints, and setup only requires processing each item $k-1$ times. Compared to Formulation~(\ref{transport}), which is an integral part of the two algorithms for the computation of the sequence of clusterings, it is trivial to solve. 

Finally, for the construction of the shared power diagram $\overline{P}^\nu$ that induces both $C^{\nu - 1}$ and $C^\nu$, suppose we are given a site vector $\overline{s} \in \mathbb{R}^{d \cdot k}$ that is contained in the intersection of the normal cones of $w(C^{\nu-1})$ and $w(C^{\nu})$. For the clusterings $C^{\lambda_r}, C^{\lambda_{r-1}}$ ($1 \leq r \leq m$) along the linear transition $(1-\lambda) s + \lambda t $ (red arcs in Figure~\ref{fig:lineartransition}), we can choose the site vector $\overline{s} = s^{\lambda_{r}} = (1 - \lambda_{r}) s + \lambda_{r} t$ for $P^{\lambda_r}$, see \textsc{AlgoRadToRad}. For two consecutive clusterings in $\set{C^{s,0},\dots,C^{s,p}}$ and $\set{C^{t,q},\dots,C^{t,0}}$ (blue arcs in Figure~\ref{fig:lineartransition}), $\overline{s} \in \set{s,t}$ allows for the computation of a shared power diagram, too. The somewhat technical proof of this claim is part of the proof of Theorem \ref{thm:init-to-rad} in Section \ref{sec:init-to-rad}.

In both situations, we obtain a shared power diagram that induces $C^{\nu-1}$ and $C^{\nu}$ by adding the constraints~(\ref{LP:computeweights:feasible}) for $C^{\nu-1}$ to LP~(\ref{LP:computeweights}) for $C^\nu$. The items by which $C^{\nu-1}$ and $C^{\nu}$ differ lie on the boundary of the respective cells; see~\cite{Borgwardt2010,BorgwardtHappach2019}. Thus, the optimal objective value of LP~(\ref{LP:computeweights}) is equal to zero and computing a power diagram that induces both clusterings simultaneously can be done by finding a feasible solution $\gamma = (\gamma_1,\dots,\gamma_k)$ to the following set of linear constraints that does {\em not} contain a scaling by $\|\overline{s}_\ell - \overline{s}_i\|^{-1}$:
\begin{subequations}\label{LP:computeweights:boundarypowerdiagram}
\begin{align}
&& \max_{x_j \in C^\nu_i} (\overline{s}_\ell - \overline{s}_i)^T x_j  &\leq \gamma_\ell - \gamma_i & \forall & \, i \in [k], \forall \, \ell \in [k] \setminus \{i\}, \label{LP:computeweights:boundarypowerdiagram:feasible}\\
&& \max_{x_j \in C^{\nu-1}_i} (\overline{s}_\ell - \overline{s}_i)^T x_j  &\leq \gamma_\ell - \gamma_i & \forall & \, i \in [k], \forall \, \ell \in [k] \setminus \{i\}, \label{LP:computeweights:boundarypowerdiagram:feasible2}\\
\span & \gamma_1 & = 0, \label{LP:computeweights:boundarypowerdiagram:normalize} \\
\span & \gamma_i &\in \mathbb{R} & \forall & \, i \in [k]. \label{LP:computeweights:boundarypowerdiagram:weights}
\end{align}
\end{subequations}
This system can be considered an LP with trivial objective function $\max 0^T\gamma$ or solved through a Fourier-Motzkin elimination. 

\section{A Fixed-Site Transition between LSAs and Radial Clusterings}\label{sec:init-to-rad}

In this section, we provide an algorithm called \textsc{AlgoLSAtoRad} to transition from a constrained LSA w.r.t.~some site vector $\overline{s} \in \mathbb{R}^{d \cdot k}$ to a radial clustering w.r.t.~$\overline{s}$.
This algorithm can be seen as a pre-processing step for the linear transition along radial clusterings from an initial site vector $s$ to a target site vector~$t$, which is discussed in Section~\ref{sec:rad-to-rad}. Recall that we do not have an explicit algebraic representation of the bounded-shape partition polytope $\mathcal{P}^{\pm}$, which is why we perform the computations over the corresponding bounded-shape transportation polytope $\mathcal{T}^{\pm}$. We enter the algorithm with cluster size bounds as part of the input; see lines 2 and 3 of Algorithm~\ref{alg:overalltransition}. 

To ensure separability of all intermediate clusterings in the transition, we repeat the following steps: the initial clustering $C$ is a vertex of the bounded-shape transportation polytope $\mathcal{T}^{\pm}$. First, we walk along an improving edge in $\mathcal{T}^{\pm}$ to an adjacent vertex. This vertex corresponds to a better (w.r.t.~the objective vector $c(\overline{s})$) clustering $C'$ of different shape. (Note $C$ already was a constrained LSA, i.e., optimal over all clusterings of the same shape.) Then, we fix the lower and upper bounds on the cluster sizes to $\abs{C'}$ and transition in the corresponding single-shape transportation polytope $\mathcal{T}^=(\abs{C'})$ until we reach an optimal clustering with shape $\abs{C'}$ in direction of~$c(\overline{s})$, i.e., a constrained LSA w.r.t.~$\overline{s}$ for shape $\abs{C'}$. As we will show in Lemma~\ref{lem:transition:init-to-radial} and explain in the paragraph following it, there exists such an optimal clustering that differs from the initial clustering $C$ by a single sequential exchange, and it is easy to devise it from $C'$. Not only does this mean we take a step of the desired form, but it also makes it possible to compute a shared separating power diagram for two consecutive clusterings; we prove this as part of Theorem \ref{thm:init-to-rad}. This concludes the first iteration. 

\begin{figure}
\centering
\subcaptionbox{Initial LSA}{\includegraphics[width=0.3\textwidth]{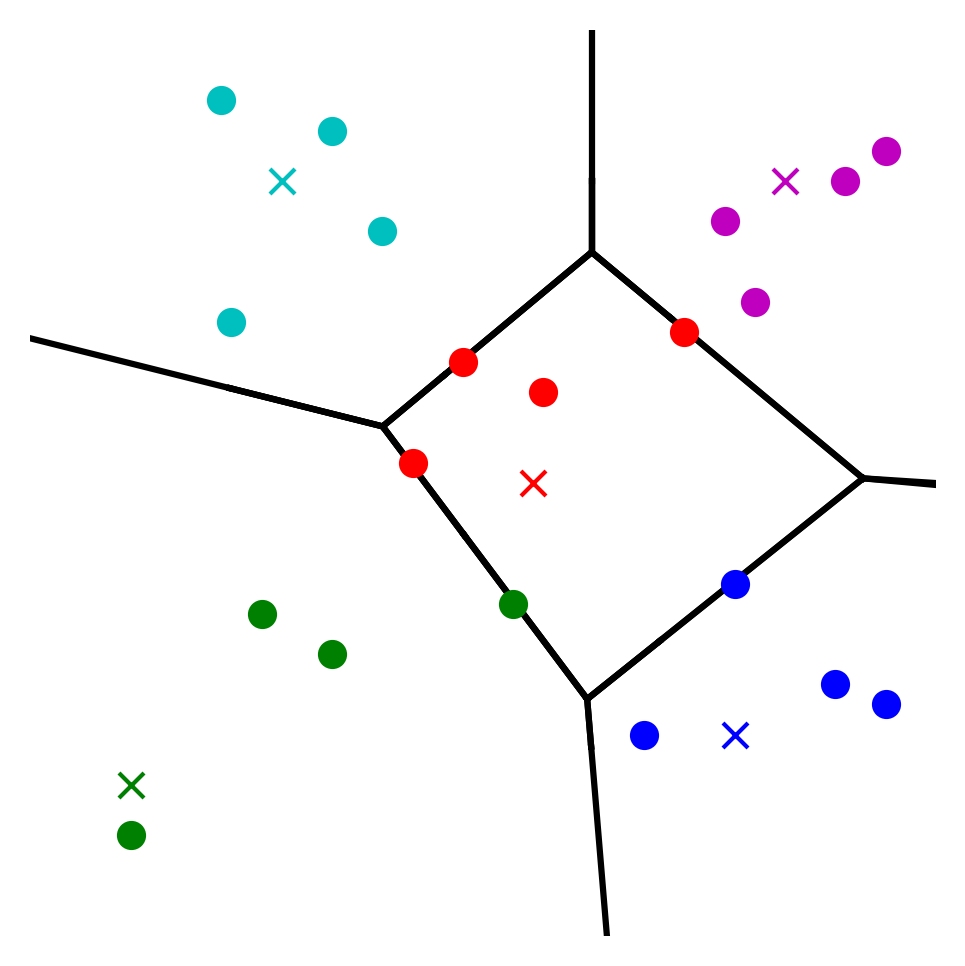} }%
\hfill
\subcaptionbox{Intermediate LSA }{\includegraphics[width=0.3\textwidth]{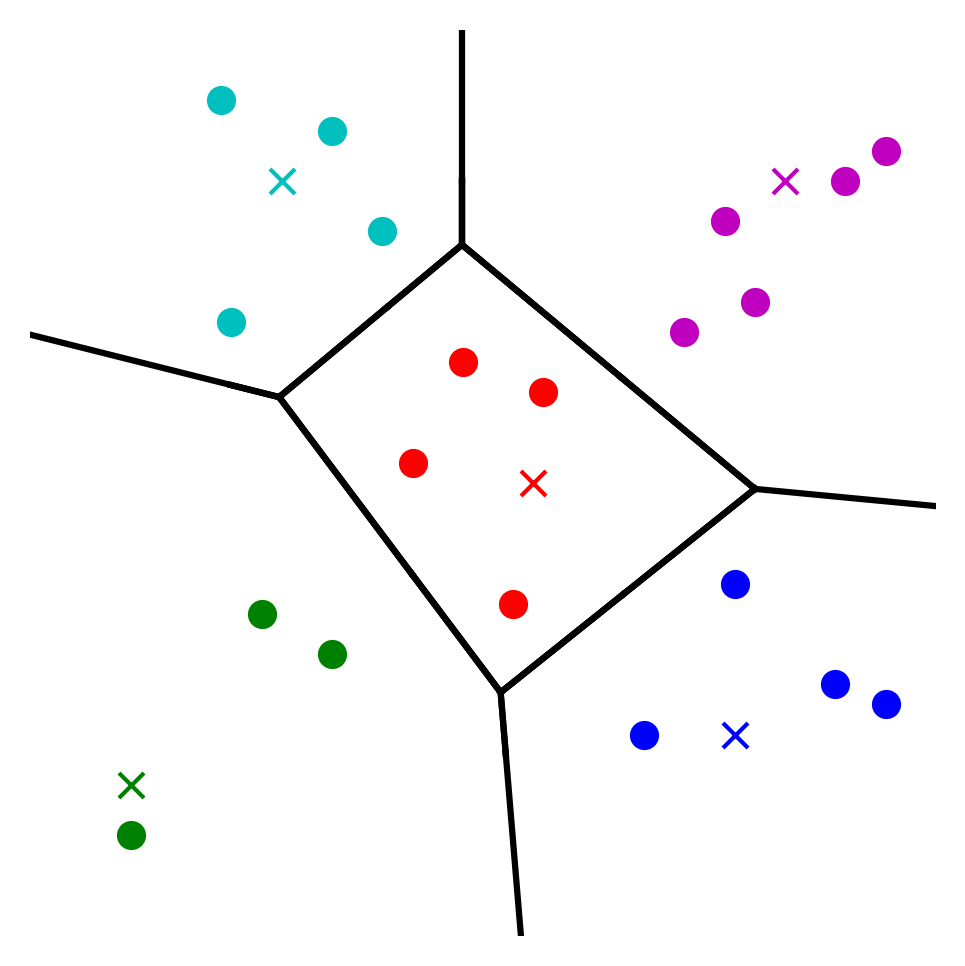}}%
\hfill
\subcaptionbox{Target LSA }{\includegraphics[width=0.3\textwidth]{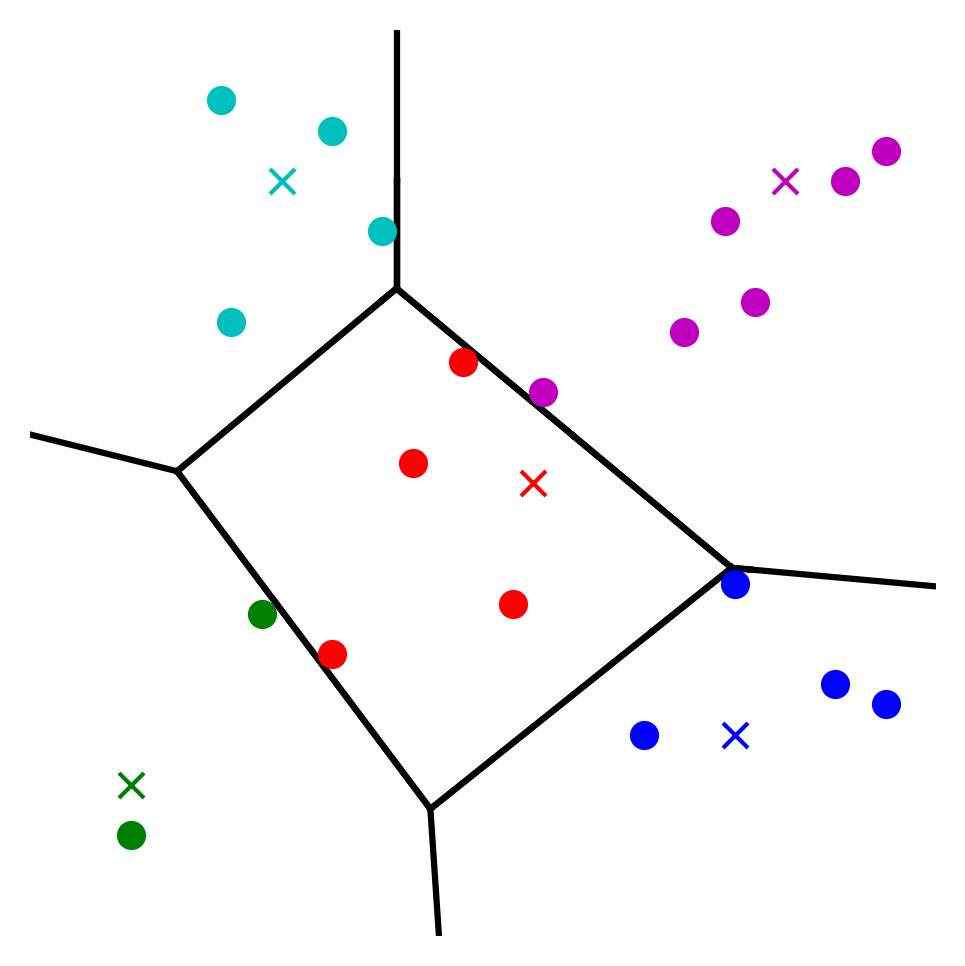}}%
\caption{Initial, target, and one intermediate LSA in a fixed-site transition as in \textsc{AlgoLSAtoRad}. The target LSA is a radial clustering for the (implicit) cluster size bounds. The three clusterings are constrained LSAs for the same sites, but different shapes.}
\label{fig:startInto2Rad}
\end{figure}

We update the initial clustering from $C$ to $C'$ and repeat this scheme until we reach a clustering that maximizes $c(\bar s)$ over $\mathcal{T}^{\pm}$, i.e., the desired radial clustering. See \textsc{AlgoLSAtoRad}, Algorithm~\ref{alg:init-to-rad}, for a description of this scheme in pseudocode. 
Figure~\ref{fig:startInto2Rad} depicts an initial and target clustering, as well as an intermediate clustering in the transition. Note that each clustering $C^r$ ($r \geq 1$) is a constrained LSA w.r.t.~$\overline{s}$ for its own shape~$\abs{C^r}$, as we optimize over the single-shape transportation polytope in line~\ref{alg:init-to-rad:single-shape-step} in direction of $c(\overline{s})$ in each iteration. The constructed sequence is the desired fixed-site transition. We sum up the favorable properties of the algorithm, and its output, in the following theorem. 

\IncMargin{1em}
\begin{algorithm}[t]

	\Indm
	\KwIn{Constrained LSA $C$ w.r.t.~a site vector $\overline{s} \in \mathbb{R}^{d \cdot k}$; cluster size bounds $\kappa^\pm$}
	\KwOut{Sequence of constrained LSAs and sequence of corresponding power diagrams that satisfy the properties of Theorem \ref{thm:init-to-rad}}
	\Indp
	
	\medskip
	
	$r \leftarrow 0$ and $C^0 \leftarrow C$\;
	\While{$C^r$ is not optimal in $\mathcal{T}^{\pm}(\kappa^-,\kappa^+)$ in direction of $c(\overline{s})$}{
	    Perform a simplex step from $C^r$ w.r.t.~objective $\max c(\overline{s})^Ty$ in $\mathcal{T}^{\pm}(\kappa^-,\kappa^+)$ to an adjacent vertex$\slash$clustering; update $C$ to the new clustering\;\label{alg:init-to-rad:bounded-shape-step}
	    Find an optimal clustering in $\mathcal{T}^=(\abs{C})$ that differs from $C^r$ by a single sequential exchange; update $C$ to the new clustering\;\label{alg:init-to-rad:single-shape-step}
	    Solve LP~(\ref{LP:computeweights}) with $\overline{s}$ and let $P^r$ be the corresponding power diagram\;
	    \If{$r \geq 1$}{
	        Compute a feasible solution for~(\ref{LP:computeweights:boundarypowerdiagram}) with $\overline{s}$ and $C^\nu=C$, $C^{\nu-1}=C^{r-1}$, and let $\overline{P}^r$ be the corresponding power diagram\;
	    }
	    $r \leftarrow r+1$ and $C^{r} \leftarrow C$\;
	}
	
    \Return sequence of clusterings\vspace*{-0.25cm} $$(C^{0},C^{1},\dots,C^{r})$$ \hspace*{1.18cm}and sequence of power diagrams\vspace*{-0.25cm} $$(P^0,\overline{P}^1,P^1,\dots,\overline{P}^r,P^r)\;$$

\caption{\textsc{AlgoLSAtoRad}($C,\overline{s},\kappa^-,\kappa^+$). Fixed-site transition from initial LSA w.r.t.~site vector~$\overline{s}$ to corresponding radial clustering.}
\label{alg:init-to-rad}
\end{algorithm}
\DecMargin{1em}

\begin{theorem}\label{thm:init-to-rad}
Let $C^{\overline{s}}$ be a constrained LSA with site vector $\overline{s}$, and let $\kappa^-,\kappa^+$ be cluster size bounds such that $\kappa^- \leq |C| \leq \kappa^+$. \textsc{AlgoLSAtoRad} returns a sequence of clusterings $$(C^{0},C^{1},\dots,C^{r})$$ and a sequence of power diagrams $$(P^0,\overline{P}^1,P^1,\dots,\overline{P}^r,P^r)$$ that satisfy the following properties:
\begin{enumerate}
    \item $C^{0}=C^{\overline{s}}$; $C^r$ is a radial clustering for sites $\overline{s}$
    \item all clusterings are constrained LSAs for sites $\overline{s}$
    \item all clusterings $C$ have cluster sizes $|C_i|$ satisfying $ \kappa_i^- \leq |C_i| \leq \kappa_i^+$ for all $i\leq k$
    \item consecutive clusterings differ by a single sequential exchange of items
    \item $P^{i}$ is a separating power diagram for sites $\overline{s}$ for $C^{i}$
    \item for each pair $C^{i-1}$, $C^{i}$ of consecutive clusterings, $\overline{P}^{i}$ is a shared power diagram for sites $\overline{s}$ 
    \item the shapes $|C^{i}|$ are all distinct; the number of clusterings in the sequence is bounded by the number of shapes.
\end{enumerate}
\end{theorem}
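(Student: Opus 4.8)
The plan is to verify each of the seven properties by tracking the invariants maintained through one iteration of the while-loop and then arguing termination. The central technical ingredient is Lemma~\ref{lem:transition:init-to-radial}, which guarantees that after the ``bad'' simplex step in line~\ref{alg:init-to-rad:bounded-shape-step} of the algorithm (which increases the objective $c(\overline s)^Ty$ but changes the shape), one can re-optimize within the single-shape polytope $\mathcal{T}^=(|C|)$ by a clustering that differs from the \emph{previous} iterate $C^r$ by a single sequential exchange. Granting that lemma, I would first establish the structural properties (Properties 1--4, 7) and then the power-diagram properties (Properties 5--6), which require the geometric facts about items lying on cell boundaries.

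\emph{Structural properties.} Property~1 is immediate: $C^0 = C$ by initialization, and the while-loop terminates precisely when $C^r$ is optimal over $\mathcal{T}^{\pm}$ in direction $c(\overline s)$, which by the normal-cone equivalence derived in Section~\ref{sec:transition:overview} means $\overline s \in N_{\mathcal{P}^{\pm}}(w(C^r))$, i.e.\ $C^r$ is radial. For Property~2, each $C^r$ with $r\ge 1$ is, by construction in line~\ref{alg:init-to-rad:single-shape-step}, optimal over $\mathcal{T}^=(|C^r|)$ in direction $c(\overline s)$; by Proposition~\ref{prop:constrainedLSA} this is exactly the statement that $C^r$ is a constrained LSA for $\overline s$ (and hence separable), and $C^0$ is a constrained LSA by hypothesis. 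Property~3 holds because every iterate is a vertex of $\mathcal{T}^{\pm}(\kappa^-,\kappa^+)$, whose feasible region enforces $\kappa_i^-\le|C_i|\le\kappa_i^+$ by constraints~(\ref{transport:lowerbound})--(\ref{transport:upperbound}). Property~4 is the heart of the argument and follows from Lemma~\ref{lem:transition:init-to-radial}: the composition of the simplex step (line~\ref{alg:init-to-rad:bounded-shape-step}) and the re-optimization (line~\ref{alg:init-to-rad:single-shape-step}) can be realized so that the net change from $C^r$ to $C^{r+1}$ is a single sequential exchange. Property~7: the simplex step in line~\ref{alg:init-to-rad:bounded-shape-step} strictly changes the shape (an improving edge of $\mathcal{T}^{\pm}$ out of a vertex that is already optimal within its own single-shape slice must leave that slice, so the new shape differs), while the re-optimization in line~\ref{alg:init-to-rad:single-shape-step} preserves the shape; since each iteration strictly increases the objective value and there are finitely many shapes, the shapes $|C^0|,\dots,|C^r|$ are pairwise distinct, which also bounds the length of the sequence and yields finite termination.

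\emph{Power-diagram properties.} For Property~5, since each $C^i$ is a constrained LSA for $\overline s$, Proposition~\ref{prop:constrainedLSA} (via Proposition~\ref{prop:inducedclustering}) guarantees a separating power diagram with site vector $\overline s$; concretely, LP~(\ref{LP:computeweights}) is feasible and its solution (together with $\gamma_i = \tfrac12(\|\overline s_i\|^2 - w_i)$) yields $P^i$, as discussed in Section~\ref{sec:powerdiagram}. For Property~6, I would show that the system~(\ref{LP:computeweights:boundarypowerdiagram}), which stacks the boundary constraints for $C^{r-1}$ and $C^r$ with the \emph{same} site vector $\overline s$, is feasible. This is where the geometric input is needed: the items moved by the single sequential exchange connecting $C^{r-1}$ and $C^r$ lie on the separating hyperplanes of any separating power diagram for either clustering (cf.~\cite{Borgwardt2010,BorgwardtHappach2019}), so the two sets of right-hand-side maxima $\max_{x_j\in C^{r-1}_i}(\overline s_\ell-\overline s_i)^Tx_j$ and $\max_{x_j\in C^{r}_i}(\overline s_\ell-\overline s_i)^Tx_j$ are compatible — a $\gamma$ feasible for one is feasible for the other. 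Making this precise (that both clusterings are constrained LSAs for the \emph{identical} site vector $\overline s$, so lie in a common face of $\mathcal{P}^=$, and that the moved items sit exactly on the shared hyperplanes) is the somewhat delicate part the excerpt flags as ``somewhat technical.''

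\emph{Main obstacle.} The crux is Lemma~\ref{lem:transition:init-to-radial}: proving that the re-optimization step in line~\ref{alg:init-to-rad:single-shape-step} can always be achieved by a single sequential exchange relative to $C^r$, rather than requiring a longer sequence of cyclical exchanges within $\mathcal{T}^=(|C^{r+1}|)$. I expect this to rest on analyzing the structure of the improving edge chosen in line~\ref{alg:init-to-rad:bounded-shape-step} — by Proposition~\ref{prop:adjacentvertices} it is itself a single path or cycle in the CDG, and since it must change the shape it is in fact a single path (a sequential exchange) whose endpoints are the clusters whose sizes change — and then showing the subsequent single-shape optimum can be taken to be the clustering that ``completes'' this path appropriately; the argument that no items beyond those on the relevant hyperplanes need to move, and that optimality over $\mathcal{T}^=(|C^{r+1}|)$ is thereby attained, is what I would need to supply in the proof of that lemma. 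A secondary obstacle is the clean statement and proof of the ``items on boundary'' fact underlying Property~6, ensuring the shared-power-diagram LP~(\ref{LP:computeweights:boundarypowerdiagram}) is feasible.
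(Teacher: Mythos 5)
Your skeleton matches the paper's proof closely for Properties 1--3, 5, and 7, but the two steps you yourself flag as obstacles are precisely where the paper's proof does its real work, and your sketches of them either stop short or head in a direction that would not close. For Property 4 (Lemma~\ref{lem:transition:init-to-radial}), the missing idea is not an analysis of the improving edge from line~\ref{alg:init-to-rad:bounded-shape-step} alone: the paper takes an \emph{arbitrary} optimum $C^{\text{opt}}$ of $\mathcal{T}^=(\abs{C})$, forms $CDG(C^{i-1},C^{\text{opt}})$, observes by a parity count on node degrees that it decomposes into a single path $\overline{P}$ (between the two clusters whose sizes changed) plus arc-disjoint cycles, and then uses the fact that $C^{i-1}$ is already optimal over $\mathcal{T}^=(\abs{C^{i-1}})$ to conclude that every such cycle has non-positive objective gain; discarding the cycles and applying only $\overline{P}$ to $C^{i-1}$ therefore yields a clustering of shape $\abs{C}$ at least as good as $C^{\text{opt}}$, hence optimal, and differing from $C^{i-1}$ by a single sequential exchange. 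Your plan of ``completing the path'' of the simplex step does not contain this discard-the-cycles argument, which is the entire content of the lemma.

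For Property 6 your justification is flawed as stated: $C^{i-1}$ and $C^{i}$ have \emph{different} shapes, so they do not lie in a common face of any single-shape polytope $\mathcal{P}^=$, and moreover $c(\overline{s})^Ty^{i} > c(\overline{s})^Ty^{i-1}$ (the simplex step strictly improves), so they are not equally good over $\mathcal{T}^{\pm}(\kappa^-,\kappa^+)$ either; a direct ``common normal cone'' argument is not available. The paper instead passes to the fractional clustering $y^{\text{frac}}=\frac{1}{2}(y^{i-1}+y^{i})$, shows it is optimal with respect to $\overline{s}$ over a bounded-shape polytope with suitably chosen (relaxed) size bounds, and invokes the fact that optimal \emph{fractional} clusterings also admit separating power diagrams; the items assigned $50\%$ to two clusters then necessarily lie on the corresponding cell boundaries, so this single power diagram induces both $C^{i-1}$ and $C^{i}$. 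Your intuition that the moved items end up on the separating hyperplanes is the right geometric picture, but an argument of this kind is needed to actually \emph{produce} one $\gamma$ feasible for both constraint sets in~(\ref{LP:computeweights:boundarypowerdiagram}); asserting that ``a $\gamma$ feasible for one is feasible for the other'' is exactly the claim to be proved, not a proof of it.
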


\begin{proof}{Proof.}
Recall the informal description of \textsc{AlgoLSAtoRad} above. Most of the claimed properties are direct consequences of the design of the algorithm:

The initial clustering $C^0=C$ is a constrained LSA w.r.t.~$\overline{s}$. This is equivalent to $C^0$ being optimal over $\mathcal{T}^=(\abs{C^0})$, the single-shape transportation polytope of all clusterings of the same shape. Each other clustering $C$ in the produced sequence is computed as an optimum w.r.t.~$c(\overline{s})$ over $\mathcal{T}^=(\abs{C})$ (line $4$), and thus also is a constrained LSA w.r.t.~$\bar s$.

The improving simplex step in line $3$ guarantees that each consecutive clustering $C$ is {\em strictly} better w.r.t.~$c(\overline{s})$. Each clustering is an optimum w.r.t.~$c(\bar s)$ over $\mathcal{T}^=(\abs{C})$, so this strict improvement can only come from finding a clustering of a new shape that is distinct from the shapes of all previous clusterings in the sequence. Thus, the sequence of clusterings is finite; in fact, it is bounded by the number of possible shapes. The algorithm terminates with a radial clustering $C^r$ after $r$ runs of the while-loop. This proves properties $1$, $2$, and $7$. 

Further, line $3$ is the only step that can change the clustering shape. As the simplex step is taken over $\mathcal{T}^{\pm}(\kappa^-,\kappa^+)$, cluster sizes $|C_i|$ are bounded throughout by $ \kappa_i^- \leq |C_i| \leq \kappa_i^+$ for all $i\leq k$. This gives property $3$.

We will show property 4, i.e., the fact that two consecutive clusterings only differ by a single sequential exchange of items, in Lemma \ref{lem:transition:init-to-radial} below. 

Property $5$ follows immediately from each clustering $C^i$ in the sequence being optimal for sites $\overline{s}$ over $\mathcal{T}^=(\abs{C^i})$. LP~(\ref{LP:computeweights}) has a feasible solution and returns the desired $P^i$.

It remains to prove property $6$. Let $C^{i-1}$ and $C^i$ denote two consecutive clusterings. Recall that a clustering $C$ allows a separating power diagram for sites $\overline{s}$ if the vector $\overline{s}$ lies in the normal cone of the clustering vector $w(C)$ in any partition polytope \citep{Borgwardt2010}. In fact, this property also holds for {\em fractional} clusterings, where items can be assigned partially to multiple clusters (adding up to $1$) and the corresponding partition polytopes \citep{BriedenGritzmann2012}.
 
 We have to show that there exists a {\em shared} power diagram that induces both $C^{i-1}$  and $C^i$ for sites $\overline{s}$. Let $y^{i-1}$ and $y^i$ be the 0/1-vectors corresponding to $C^{i-1}$ and $C^i$. Recall that $C^{i-1}$ and $C^i$ only differ by a single sequential exchange of items (property 4). Consider the fractional clustering $C^\text{frac}$ corresponding to $y^\text{frac}=\frac{1}{2} (y^{i-1} + y^i)$. The vector $y^\text{frac}$ has components $0$, $0.5$, or $1$, and is informally obtained from applying `half' of the sequential exchange applied to $C^{i-1}$: each item moved now belongs $50\%$ to its original cluster in $C^{i-1}$ and $50\%$ to its new cluster in $C^i$ -- the corresponding variables in $y^\text{frac}$ are $0.5$.
 
 Let $\theta^{\pm}$ and $\eta^+$ be cluster size bounds derived as $\theta_l^-= \min\set{\abs{C^{i-1}_l},\abs{C^i_l}}$, $\theta_l^+= \max\set{\abs{C^{i-1}_l},\abs{C^i_l}}$, and $\eta_l^+= \max\set{\abs{C^{i-1}_l},\abs{C^\text{frac}_l}}$ for all $l \in [k]$. Recall $C^{i-1}$ is optimal w.r.t~$\overline{s}$ over $\mathcal{P}(|C^{i-1}|)$,
 As $C^i$ is optimal w.r.t.~$\overline{s}$ over $\mathcal{P}^{\pm}(\theta^-,\theta^+)$, $C^\text{frac}$ is optimal w.r.t~$\overline{s}$ over $\mathcal{P}^{\pm}(\theta^-,\eta^+)$. In fact, the same sequential exchange used to obtain $C^i$ from $C^{i-1}$ (property $4$) leads to the new optimal fractional clustering $C^\text{frac}$ when moving only $50\%$ of each item involved. This implies the existence of a separating power diagram $\overline{P}^i$ for sites $\overline{s}$ for the fractional clustering $C^\text{frac}$. This power diagram $\overline{P}^i$ induces both $C^{i-1}$ and $C^i$, so it is the desired shared power diagram. For consecutive clusterings $C^{i-1}$ and $C^{i}$, such a power diagram $\overline{P^i}$ is found through a solution of LP~(\ref{LP:computeweights:boundarypowerdiagram}) with sites $\overline{s}$ and $\nu = i$, and denoted in lines $6$ to $8$ of \textsc{AlgoLSAtoRad}. This proves the claim.
\hfill \qed
\end{proof}

To complete the proof of Theorem \ref{thm:init-to-rad}, it remains to show correctness of property $4$. We do so in the following lemma. In particular, it shows that the choice of $C$ in line~\ref{alg:init-to-rad:single-shape-step} is well-defined: there always exists an optimal constrained LSA for the new clustering shapes identified in line $3$ such that the difference to the previous clustering is only a single sequential exchange. 

\begin{lemma}\label{lem:transition:init-to-radial}
Let $\overline{s} \in \mathbb{R}^{d \cdot k}$ be a site vector and consider the $i$-th iteration of \textsc{AlgoLSAtoRad}. Let $C^{i-1}$ refer to the constrained LSA from the previous iteration, and let $C$ correspond to the clustering of updated shape (line $3$ of \textsc{AlgoLSAtoRad}). There exists a constrained LSA $C^i$ w.r.t.~$\overline{s}$ in $\mathcal{T}^=(\abs{C})$ such that $C^{i-1}$ and $C^i$ differ by a single sequential exchange of items.
\end{lemma}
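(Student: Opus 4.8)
The plan is to read the statement as a sensitivity result for the (linear) transportation problem that underlies $\mathcal{T}^=$, and to prove it by a maximum‑gain augmenting‑path argument on the exchange graph of $C^{i-1}$.

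First I would pin down the shape of the clustering $C$ produced in line~3. The simplex step there moves from $C^{i-1}$ along an \emph{improving} edge of $\mathcal{T}^{\pm}(\kappa^-,\kappa^+)$. No such edge can lead to a vertex of the same shape $\abs{C^{i-1}}$: the single‑shape polytope $\mathcal{T}^=(\abs{C^{i-1}})$ is a sub‑polytope of $\mathcal{T}^{\pm}$ containing $C^{i-1}$, and a standard face argument shows that any vertex adjacent to $C^{i-1}$ in $\mathcal{T}^{\pm}$ that also lies in $\mathcal{T}^=(\abs{C^{i-1}})$ is already adjacent to $C^{i-1}$ \emph{inside} $\mathcal{T}^=(\abs{C^{i-1}})$, over which $C^{i-1}$ is optimal for $c(\overline{s})$. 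Hence the line‑3 edge is shape‑changing; by Proposition~\ref{prop:adjacentvertices} the CDG of two clusterings adjacent in $\mathcal{T}^{\pm}$ is a single path or a single cycle, and since a cycle preserves all cluster sizes, $C$ arises from $C^{i-1}$ by a single sequential exchange along a path whose tail cluster $v_0$ loses one item and whose head cluster $v_\ell$ gains one, so $\abs{C}=\abs{C^{i-1}}-e_{v_0}+e_{v_\ell}$ with $v_0\ne v_\ell$ (here $e_{v_0},e_{v_\ell}$ are unit vectors in $\mathbb{R}^k$). The lemma is therefore equivalent to the following: \emph{there is a constrained LSA $C^i$ w.r.t.\ $\overline{s}$ of shape $\abs{C}$ for which $y^i-y^{i-1}$ is supported on a single directed $v_0$--$v_\ell$ path with pairwise distinct items as arc labels}, where $y^{i-1},y^i\in\{0,1\}^{n\cdot k}$ are the encodings of $C^{i-1},C^i$.

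Next comes the construction. I would work in the residual (exchange) graph of $C^{i-1}$ on the bipartite node set ``items $\cup$ clusters'': a backward arc from cluster $a$ to item $x_j$ whenever $x_j\in C^{i-1}_a$, and a forward arc from item $x_j$ to cluster $b$ whenever $x_j\notin C^{i-1}_b$; assign to a $v_0$--$v_\ell$ walk the \emph{gain} equal to the sum of the new $c(\overline{s})$‑entries created minus the sum of the $c(\overline{s})$‑entries destroyed when one rerouted each item one step along it. The walk $v_0\to\cdots\to v_\ell$ underlying the sequential exchange $C^{i-1}\to C$ shows that the set of simple $v_0$--$v_\ell$ walks is nonempty; let $\pi$ be one of maximum gain, and set $C^i:=C^{i-1}\oplus\pi$. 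Since $\pi$ is simple, rerouting by one unit along it moves each of finitely many \emph{distinct} items exactly one step forward, so $C^i$ is a genuine clustering with $\abs{C^i}=\abs{C^{i-1}}-e_{v_0}+e_{v_\ell}=\abs{C}$, and $CDG(C^{i-1},C^i)$ is exactly the directed path underlying $\pi$ with the rerouted items as labels — a single path, as required. Optimality of $C^i$ over $\mathcal{T}^=(\abs{C})$ is the last step: for any optimal clustering $C^*$ there, the integral ``flow'' $y^*-y^{i-1}$ has divergence $0$ at every item, $-1$ at $v_0$, $+1$ at $v_\ell$, and $0$ at every other cluster, so it decomposes into one simple $v_0$--$v_\ell$ path $\pi^*$ plus finitely many cycles $\Gamma_1,\dots,\Gamma_m$, all built from residual arcs of $C^{i-1}$. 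Each $\Gamma_r$ has nonpositive gain: otherwise rerouting $C^{i-1}$ by one unit along $\Gamma_r$ — feasible because $C^{i-1}$ is $0/1$ — would yield a clustering of shape $\abs{C^{i-1}}$ with strictly larger $c(\overline{s})$‑value, contradicting that $C^{i-1}$ is a constrained LSA. Therefore $c(\overline{s})^Tw(C^*)$ equals $c(\overline{s})^Tw(C^{i-1})+\mathrm{gain}(\pi^*)$ plus a sum of nonpositive terms, hence is at most $c(\overline{s})^Tw(C^{i-1})+\mathrm{gain}(\pi)=c(\overline{s})^Tw(C^i)$, so $C^i$ is optimal and the lemma follows.

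I expect the only real obstacle to be bookkeeping, not concept: getting the forward/backward arc conventions of the residual graph right, checking that a one‑unit augmentation along a simple path or along a cycle never violates the partition/nonnegativity constraints (which is exactly where $0/1$‑integrality of $C^{i-1}$ is used), and carrying out the flow‑decomposition of $y^*-y^{i-1}$. What I have described is essentially the successive‑shortest‑paths optimality criterion for transportation problems restated for a maximization objective; the only genuinely clustering‑specific ingredient is the reduction in the first paragraph — that the line‑3 pivot is a single sequential exchange changing exactly two cluster sizes — for which Proposition~\ref{prop:adjacentvertices} (together with optimality of $C^{i-1}$ over the single‑shape face) does the work.
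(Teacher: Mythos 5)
Your proof is correct and rests on the same two pillars as the paper's: the difference between $C^{i-1}$ and an optimal clustering of the new shape decomposes into a single $v_0$--$v_\ell$ path plus cycles, and every such cycle has nonpositive gain because $C^{i-1}$ is already optimal over $\mathcal{T}^=(\abs{C^{i-1}})$, so the path alone achieves the optimum. The only cosmetic difference is packaging — you select a maximum-gain simple path in a residual graph and verify optimality against the decomposition, whereas the paper extracts the path directly from $CDG(C^{i-1},C^{\text{opt}})$ and discards the cycles — but the underlying argument is identical.
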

\begin{proof}{Proof.}
For a simple wording, we indicate that a clustering is optimal over some transportation polytope w.r.t~$c(\overline{s})$ by saying it is optimal over the polytope.

We begin the $i$-th iteration of \textsc{AlgoLSAtoRad} with $C^{i-1}$, which is optimal over $\mathcal{T}^=(\abs{C^{i-1}})$. Clustering $C$, as devised in line $3$ of the algorithm, is of different shape and of strictly better objective function value than $C^{i-1}$. Let $y^{i-1}$ and $y$ be the 0/1 vectors in $\mathcal{T}^{\pm}$ corresponding to $C^{i-1}$ and $C$, respectively. By Proposition~\ref{prop:adjacentvertices}, we get that $CDG(C^{i-1},C)$ contains a single path $P$, which, without loss of generality, starts at cluster $1$ and ends at cluster $k$. 
If $C$ is optimal over $\mathcal{T}^=(\abs{C})$, we set $C^i=C$ and are done. 

So suppose $C$ is not optimal over $\mathcal{T}^=(\abs{C})$. This means there exists a different optimal clustering $C^\text{opt}$ over $\mathcal{T}^=(\abs{C})$ with $\abs{C^\text{opt}}=\abs{C}$ and corresponding $y^\text{opt}$ satisfying $c(\overline{s})^T(y^\text{opt} - y) > 0$.
Consider $CDG(C^{i-1},C^\text{opt})$. Note all nodes in $\set{2,\dots,k-1}$ have even degree and the nodes $1$ and $k$ have odd degree. Hence, $CDG(C^{i-1},C^\text{opt})$ greedily decomposes into a path $\overline{P}$ from node $1$ to $k$ and a set of arc-disjoint cycles $\mathcal{CY}$ \citep{Borgwardt2013,BorgwardtViss2020}.

Let $y(\mathcal{CY}):=y^\text{opt} - y^i \in \{-1,0,1\}^{n \cdot k}$; it has components $y_{lj} = -1$ and $y_{\ell j} = 1$ if and only if there is a cycle in the set $\mathcal{CY}$ that contains an arc $(l,\ell)$ with label $x_j$. 
We use the same notation $y(\mathcal{CY}_v)$ for single cycles $\mathcal{CY}_v$ in $\mathcal{CY}$ and the path $\overline{P}$, too.  The entries of $y(\mathcal{CY})$, $y(\mathcal{CY}_v)$ or $y(\overline{P})$, represent the removal of item $x_j$ from cluster $C^{i-1}_l$ and addition of it to cluster $C_\ell^\text{opt}$.

Note that $c(\overline{s})^T y(\mathcal{CY}_t)\leq 0$ for any cycle $\mathcal{CY}_t$ from $\mathcal{CY}$; otherwise $C^{i-1}$ would not be optimal over $\mathcal{T}^=(\abs{C^{i-1}})$. Thus, we have $c(\overline{s})^T y(\mathcal{CY})\leq 0$. By applying the sequential exchange corresponding to $\overline{P}$ to $C^{i-1}$, we obtain a clustering $C^i$ with $\abs{C^i} = \abs{C^\text{opt}}$ and corresponding $y^i$ satisfying
\begin{equation*}
c(\overline{s})^T (y^\text{opt} - y^{i-1}) = c(\overline{s})^T (y(\overline{P}) + y(\mathcal{CY})) \leq c(\overline{s})^T y(\overline{P}) = c(\overline{s})^T (y^i - y^{i-1})
\end{equation*}
and thus
\begin{equation*}
c(\overline{s})^T (y^i - y^\text{opt}) \geq 0
\end{equation*}
As $C^\text{opt}$ was optimal over $\mathcal{T}^=(\abs{C})$, $C^i$ is optimal, too; the inequality is satisfied with equality. Clustering $C^i$ differs from $C^{i-1}$ by a single sequential exchange. This proves the claim.\hfill \qed
\end{proof}

The proof shows that finding the next clustering $C^i$ in the sequence (i.e., line~\ref{alg:init-to-rad:single-shape-step} of \textsc{AlgoLSAtoRad}) can be done by computing an optimal clustering $C^\text{opt}$ using the simplex method over $\mathcal{T}^=(\abs{C})$ (starting from $C$ from line \ref{alg:init-to-rad:bounded-shape-step}), then setting up $CDG(C^{i-1},C^\text{opt})$ and greedily deleting all cycles from it, and finally applying the single sequential exchange corresponding to the only remaining path $\overline{P}$ in the CDG to clustering $C^{i-1}$. 


Summing up, \textsc{AlgoLSAtoRad} finds the desired fixed-site transition from a constrained LSA to a radial clustering, which is used as the first and final part of the overall transition in Theorem \ref{alg:overalltransition} and Algorithm~\ref{alg:overalltransition}.

\section{A Linear Transition between Radial Clusterings}\label{sec:rad-to-rad}

In this section, we devise an algorithm called \textsc{AlgoRadToRad} to compute a transition between two radial clusterings $C^s_\text{rad}$ and $C^t_\text{rad}$. Each step of the transition corresponds to a single exchange of items, and each intermediate clustering is a radial clustering itself. 

Geometrically, both $C^s_\text{rad}$ and $C^t_\text{rad}$ lie on the boundary of $\mathcal{P}^\pm$ (for cluster size bounds $\kappa^{\pm}$ induced by $C^s_\text{rad}$ and $C^t_\text{rad}$) and the transition forms a walk along the boundary of this polytope. The sequence of clusterings follows the linear transition $(1-\lambda) s +  \lambda t$ of site vectors for increasing $\lambda$. In the corresponding $\mathcal{T}^\pm$, $C^s_\text{rad}$ and $C^t_\text{rad}$ are vertices and the transition takes the form of an edge walk, following a linear transition of objective functions $(1-\lambda)c(s) + \lambda c(t)$ for increasing $\lambda$. For this transition, we need to identify when and how clusterings change. 

By performing our computations over $\mathcal{T}^\pm$ (for which there exists an explicit representation), this can be done by adapting classical tools of sensitivity analysis and ranging. Generally, vertices of $\mathcal{T}^{\pm}$ can be highly degenerate. As a service to the reader, we recall how ranging works for degenerate vertices in Appendix \ref{app:ranging}. In particular, it is possible to provide an explicit range for $\lambda$ for which a current optimal vertex, not only basis, remains optimal.

Let us adapt this to an iterative scheme over $\mathcal{T}^\pm$. We represent the feasible set~(\ref{transport}) in standard form through the addition of some slack variables for constraints (\ref{transport:lowerbound}) and (\ref{transport:upperbound}). All feasible clusterings correspond to vertices of $\mathcal{T}^{\pm}$, and $C^s_\text{rad}$ is optimal w.r.t.~$c(s)$. The desired transition $(1-\lambda)c(s) + \lambda c(t)$ for increasing $\lambda$ can equivalently be written in the form $c(s) + \lambda \Delta c$ for $\Delta c = c(t) - c(s)$. Then LP (\ref{dualradialLP}) in Appendix \ref{app:ranging} can be applied to compute the {\em breakpoint} $\lambda > 0$ beyond which the current vertex is not optimal anymore. An update of the underlying clustering happens and we take a step along an edge of $\mathcal{T}^{\pm}$ to a new, adjacent vertex.


We repeat this analysis iteratively, updating to the new vertex and an associated basis, to identify a sequence of breakpoints in the transition from $c(s)$ to $c(t)$. The result is a variant of a parametric linear programming algorithm over $\mathcal{T}^{\pm}$. See \textsc{AlgoRadToRad}, Algorithm~\ref{alg:rad-to-rad}, for a description of the scheme in pseudocode. The algorithm also performs the computation of a sequence of power diagrams associated to the clusterings. Figure~\ref{fig:radialTransitions} depicts an initial and target clustering, as well as an intermediate clustering in the transition. 

 \textsc{AlgoRadToRad} is phrased in reference to the normal cones of consecutive vertices of $\mathcal{T}^{\pm}$. Note that $\lambda_r$ ($1 \leq r \leq m$) indicates the breakpoint where the objective vector $c((1-\lambda_r)s + \lambda_r t)$ lies in the intersection of the normal cones of $C^{\lambda_{r-1}}$ and $C^{\lambda_r}$ w.r.t.~$\mathcal{T}^\pm$. Each clustering in the sequence is distinct from its predecessor, and we perform an edge walk over $\mathcal{T}^\pm$. At the end of this section, we show that the corresponding clustering vectors are distinct, too, so that we obtain a proper walk along boundary points over $\mathcal{P}^\pm$, as well.  

\begin{figure}
\centering
\subcaptionbox{Initial radial clustering}{\includegraphics[width=0.3\textwidth]{5ptSimpleStartLSA.png}}%
\hfill
\subcaptionbox{\small Intermediate radial clustering}{\includegraphics[width=0.3\textwidth]{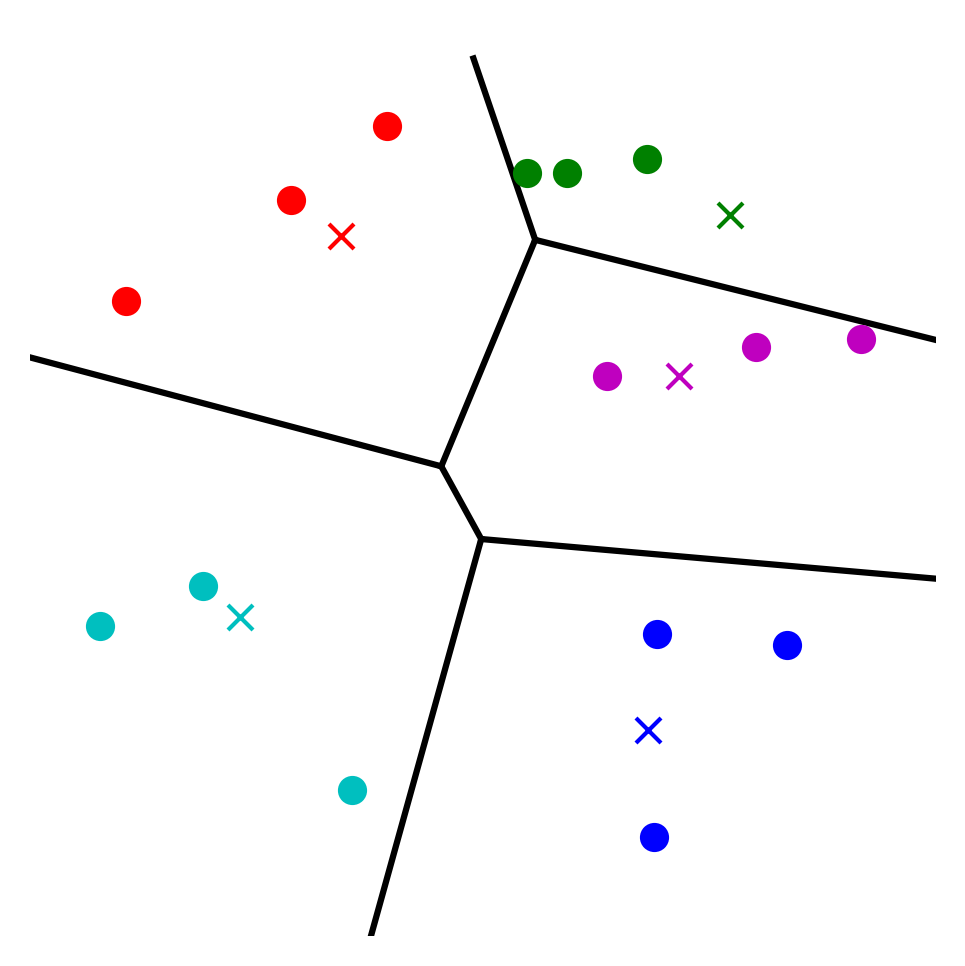}}%
\hfill
\subcaptionbox{Target radial clustering}{\includegraphics[width=0.3\textwidth]{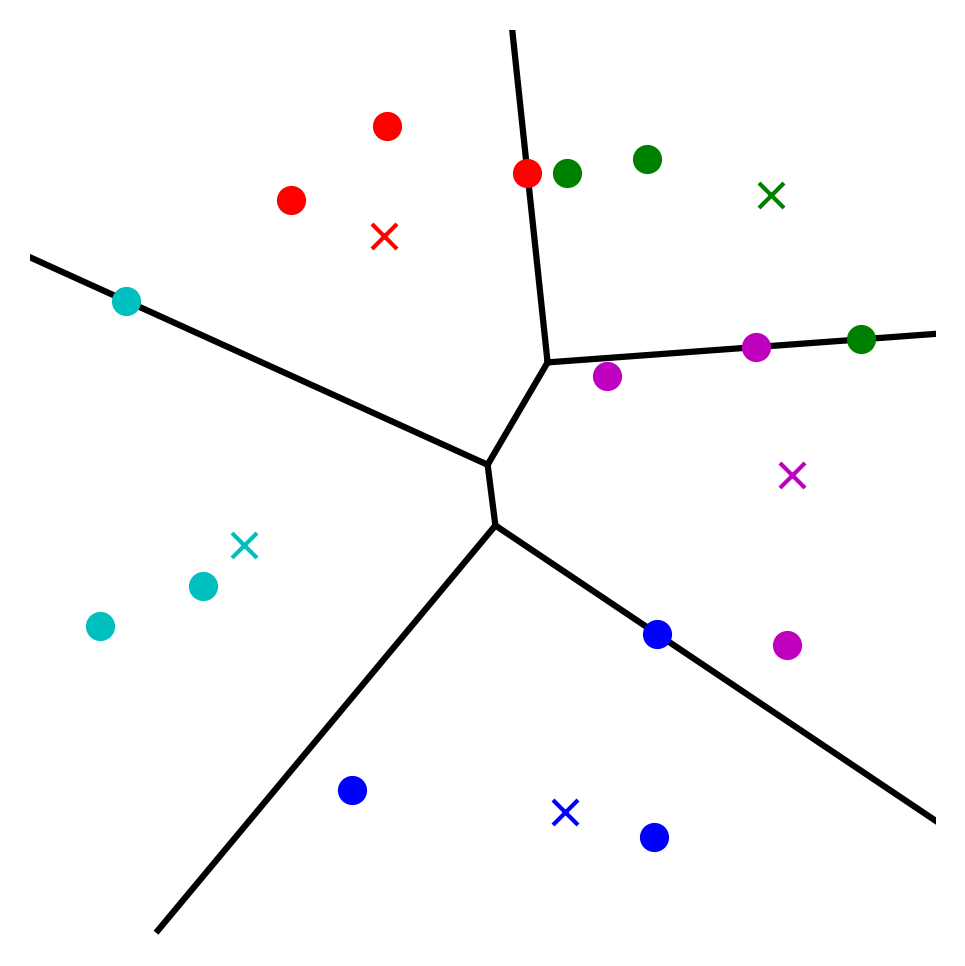}}%
\caption{Initial, target, and one intermediate radial clustering in a linear transition as in \textsc{AlgoRadToRad}. The sites follow a linear transition from $s$, for the initial clustering, to $t$, for the target clustering. The sites for the intermediate clustering lie on the line segment between their initial and target positions.}
\label{fig:radialTransitions}
\end{figure}

We prove correctness of \textsc{AlgoRadToRad}, along with a number of favorable properties of the transition and output, in the following theorem. 

\IncMargin{1em}
\begin{algorithm}[t]

	\Indm
	\KwIn{Radial clusterings $C^s_\text{rad}$ and $C^t_\text{rad}$ w.r.t.~site vectors $s \in \mathbb{R}^{d \cdot k}$ and $t \in \mathbb{R}^{d \cdot k}$;\\ \hspace*{1.26cm} cluster size bounds $\kappa^\pm$}
	\KwOut{Sequence of radial clusterings and sequence of corresponding power diagrams that satisfy the properties of Theorem \ref{thm:rad-to-rad}}
	\Indp
	
	\medskip
	
	Set $\lambda \leftarrow 0, \lambda_0 \leftarrow 0, r \leftarrow 0$ and $C^{\lambda_0} = C^s_\text{rad}$\;
	\While{$C^{\lambda_r} \not= C^t_\text{rad}$}{
	    Increase $\lambda$ to the next ranging breakpoint such that $c(s^\lambda)=\lambda c(t) + (1- \lambda) c(s)$ satisfies $c(s^\lambda) \in N_{\mathcal{T}^{\pm}(\kappa^-,\kappa^+)}(C^{\lambda_r}) \cap N_{\mathcal{T}^{\pm}(\kappa^-,\kappa^+)}(C^\lambda)$ for some adjacent vertex$\slash$clustering $C^\lambda \neq C^{\lambda_r}$\;\label{alg:rad-to-rad:update} 
	    Set $r \leftarrow r + 1$, $\lambda_r \leftarrow \lambda$ and $C^{\lambda_r} \leftarrow C^\lambda$\;
	    
	    Compute a feasible solution for LP~(\ref{LP:computeweights:boundarypowerdiagram}) with $\overline{s}=s^{\lambda_r}$ and $C^{\nu-1}=C^{\lambda_{r-1}}$, $C^\nu=C^{\lambda_r}$, and let $\overline{P}^r$ be the corresponding power diagram\;
	    
	    \If{$r \geq 2$}{
	        Solve LP~(\ref{LP:computeweights}) with $\overline{s} = \frac{1}{2}(s^{\lambda_{r-1}} + s^{\lambda_r})$
	        and $C^\nu = C^{\lambda_{r-1}}$ and let $P^{\lambda_{r-1}}$ be the corresponding power diagram\;
	    }
	    
	}
	
    \Return sequence of clusterings\vspace*{-0.25cm} $$(C^{\lambda_0},C^{\lambda_1},\dots,C^{\lambda_r})$$ \hspace*{1.18cm}and sequence of power diagrams\vspace*{-0.25cm} $$(\overline{P}^{\lambda_1},P^{\lambda_1},\overline{P}^{\lambda_2},\dots,P^{\lambda_{r-1}},\overline{P}^{\lambda_r})\;$$
    
\caption{\textsc{AlgoRadToRad}($C^s_\text{rad},C^t_\text{rad},s,t,\kappa^-,\kappa^+$). Linear transition from radial clustering $C^s_\text{rad}$ w.r.t.~$s$ to radial clustering $C^t_\text{rad}$ w.r.t.~$t$.}
\label{alg:rad-to-rad}
\end{algorithm}
\DecMargin{1em}

\begin{theorem}\label{thm:rad-to-rad}
Let $C^s_\text{rad}$ and $C^t_\text{rad}$ be radial clusterings w.r.t.~site vectors $s$ and $t$. Further, let $\kappa^-,\kappa^+$ be cluster size bounds such that $\kappa^- \leq |C^s_\text{rad}|, |C^t_\text{rad}| \leq \kappa^+$, and let $s^\lambda = (1-\lambda)s + \lambda t$. \textsc{AlgoRadToRad} returns a sequence of clusterings $$(C^{\lambda_0},C^{\lambda_1},\dots,C^{\lambda_r})$$
and a sequence of power diagrams
$$(\overline{P}^{\lambda_1},P^{\lambda_1},\overline{P}^{\lambda_2},\dots,P^{\lambda_{r-1}},\overline{P}^{\lambda_r})$$ that satisfy the following properties:
\begin{enumerate}
    \item $C^{\lambda_0}=C^s_\text{rad}$, $C^{\lambda_r}=C^t_\text{rad}$
    \item the $C^{\lambda_i}$ are radial clusterings for sites $s^{\lambda_i}=(1-\lambda_i) s + \lambda_i t$ for all $i\geq 0$
    \item all clusterings $C$ have cluster sizes $|C_i|$ satisfying $ \kappa_i^- \leq |C_i| \leq \kappa_i^+$ for all $i\leq k$
    \item consecutive clusterings differ by a single cyclical or sequential exchange of items
    \item $P^{\lambda_i}$ is a separating power diagram for $C^{\lambda_i}$ for sites $\frac{1}{2}(s^{\lambda_{i-1}} + s^{\lambda_i})$
    \item for consecutive clusterings $C^{\lambda_{i-1}}$, $C^{\lambda_i}$, $\overline{P}^{\lambda_i}$ is a shared power diagram for sites $s^{\lambda_i}$.
    \end{enumerate}
\end{theorem}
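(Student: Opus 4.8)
The plan is to read \textsc{AlgoRadToRad} as a parametric simplex run over $\mathcal{T}^{\pm}(\kappa^-,\kappa^+)$ for the objective family $c(s^\lambda)=c(s)+\lambda\,\Delta c$ with $\Delta c = c(t)-c(s)$, $\lambda$ increasing from $0$ to $1$, and to extract the six properties from the ranging analysis recalled before the theorem together with the normal-cone equivalence
\[
\overline{s}\in N_{\mathcal{P}^{\pm}}(w(C))\ \Longleftrightarrow\ c(\overline{s})\in N_{\mathcal{T}^{\pm}}(y_C)
\]
from Section~\ref{sec:transition:overview} (and Proposition~\ref{prop:constrainedLSA}). First I would record that $C^s_\text{rad}$ is, by definition of radial, an optimal vertex of $\mathcal{T}^{\pm}$ in direction $c(s)=c(s^0)$, so the procedure starts legitimately, and that each ranging breakpoint $\lambda_r$ produced by~\eqref{breakpoint} is precisely the right endpoint of the optimality interval of the basis of $C^{\lambda_{r-1}}$ and the left endpoint of that of $C^{\lambda_r}$; consequently $c(s^{\lambda_r})\in N_{\mathcal{T}^{\pm}}(y_{C^{\lambda_{r-1}}})\cap N_{\mathcal{T}^{\pm}}(y_{C^{\lambda_r}})$, which is exactly the condition the pseudocode imposes in line~\ref{alg:rad-to-rad:update}.

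From this set-up Properties~2, 3 and~4 are immediate. For Property~2, $C^{\lambda_i}$ is optimal over $\mathcal{T}^{\pm}$ in direction $c(s^{\lambda_i})$, hence radial w.r.t.~$s^{\lambda_i}$ by the equivalence above and Proposition~\ref{prop:inducedclustering}. Property~3 holds because every iterate is a vertex of $\mathcal{T}^{\pm}(\kappa^-,\kappa^+)$ and thus satisfies~\eqref{transport:lowerbound}--\eqref{transport:upperbound}. For Property~4, the pivot in which the vertex actually changes goes from a basis of the vertex of $C^{\lambda_{r-1}}$ to a basis of the vertex of $C^{\lambda_r}$, i.e.\ it is a step along an edge of $\mathcal{T}^{\pm}$; so consecutive clusterings are adjacent, and Proposition~\ref{prop:adjacentvertices} gives that $CDG(C^{\lambda_{r-1}},C^{\lambda_r})$ is a single path or a single cycle, i.e.\ a single sequential or cyclical exchange. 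Property~1 is where I expect to spend real effort: finiteness follows from there being finitely many bases and the directional value $\Delta c^{T} y$ being monotone along the walk, provided one uses a standard lexicographic anti-cycling rule so the parametric simplex does not stall at a degenerate vertex of $\mathcal{T}^{\pm}$ — which is especially clean here because the constraint matrix of~\eqref{transport} is totally unimodular with $0/1$ entries. To conclude that the walk ends at $C^t_\text{rad}$ and not merely at \emph{some} optimum in direction $c(t)$, I would argue that once $\lambda=1$ the current clustering is optimal over $\mathcal{T}^{\pm}$ in direction $c(t)=c(s^1)$, and then continue with an edge walk inside the optimal face of $\mathcal{T}^{\pm}$ at $\lambda=1$ toward the vertex $C^t_\text{rad}$ (every edge of that face is an edge of $\mathcal{T}^{\pm}$, so these steps are again single exchanges and keep $c(s^1)$ in both normal cones); alternatively, under a mild genericity assumption on $t$ the optimum at $\lambda=1$ is unique and equals $C^t_\text{rad}$ outright.

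For the power-diagram Properties~5 and~6 I would reuse the argument from the proof of Theorem~\ref{thm:init-to-rad}, invoking Proposition~\ref{prop:inducedclustering} and its fractional extension~\cite{BriedenGritzmann2012}. For Property~6, at the breakpoint $\lambda_i$ both $C^{\lambda_{i-1}}$ and $C^{\lambda_i}$ are optimal over $\mathcal{T}^{\pm}$ in direction $c(s^{\lambda_i})$, so $s^{\lambda_i}$ lies in $N_{\mathcal{P}^{\pm}}(w(C^{\lambda_{i-1}}))\cap N_{\mathcal{P}^{\pm}}(w(C^{\lambda_i}))$; the fractional clustering $C^\text{frac}$ with indicator $\tfrac{1}{2}(y_{C^{\lambda_{i-1}}}+y_{C^{\lambda_i}})$ then lies on the common optimal face, so $s^{\lambda_i}$ is in its normal cone as well, and the fractional version of Proposition~\ref{prop:inducedclustering} yields a separating power diagram for $C^\text{frac}$ w.r.t.~$s^{\lambda_i}$. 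Since the items on which $C^{\lambda_{i-1}}$ and $C^{\lambda_i}$ differ form a single path or cycle (Property~4) and are precisely the partially assigned items of $C^\text{frac}$, they sit on cell boundaries; hence~\eqref{LP:computeweights:boundarypowerdiagram} is feasible and any solution gives the shared diagram $\overline{P}^{\lambda_i}$. For Property~5, for any $\lambda$ strictly inside the optimality interval of $C^{\lambda_i}$ — in particular the midpoint value $\tfrac{1}{2}(s^{\lambda_i}+s^{\lambda_{i+1}})$ used in the pseudocode — the clustering $C^{\lambda_i}$ is the \emph{unique} optimum over $\mathcal{T}^{\pm}$, so $s^\lambda$ is in the interior of $N_{\mathcal{P}^{\pm}}(w(C^{\lambda_i}))$ and Proposition~\ref{prop:inducedclustering} gives a separating power diagram for $C^{\lambda_i}$ alone, computed by LP~\eqref{LP:computeweights}; here I would also reconcile the index shift between the pseudocode and the statement, consistently with the choice in Section~\ref{sec:powerdiagram}. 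The main obstacle is Property~1: making the parametric procedure well defined and finite at highly degenerate vertices of $\mathcal{T}^{\pm}$ and, above all, forcing the edge walk to terminate at the prescribed vertex $C^t_\text{rad}$; the remaining properties are bookkeeping on normal cones plus reuse of the power-diagram constructions already established for Theorem~\ref{thm:init-to-rad}.
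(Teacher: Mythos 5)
Your proposal follows essentially the same route as the paper's proof: a parametric-simplex/ranging walk over $\mathcal{T}^{\pm}$ with breakpoints placed in the intersection of consecutive normal cones, adjacency plus Proposition~\ref{prop:adjacentvertices} for Property~4, and the normal-cone/fractional-clustering arguments for Properties~5 and~6. The only place you go beyond the paper is termination at the prescribed vertex: the paper simply asserts that some $\lambda_r \leq 1$ gives $C^{\lambda_r} = C^t_\text{rad}$ because $c(t) \in N_{\mathcal{T}^{\pm}}(C^t_\text{rad})$, whereas you correctly observe that at $\lambda = 1$ the walk may reach a different vertex of the optimal face and patch this with a final edge walk inside that face (or a genericity assumption) --- a legitimate refinement of, rather than a departure from, the paper's argument.
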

\begin{proof}{Proof.}
Recall the informal description of \textsc{AlgoRadToRad} above. As before, most of the claimed properties are direct consequences of the design of the algorithm. 

Line $1$ is the initialization of the algorithm with $C^{\lambda_0} = C^s_\text{rad}$. Clustering $C^{\lambda_r}$ for increasing $r$ is the one being worked on. Lines $3-7$ are the steps in the main loop of the algorithm. They describe a step from the current clustering to a next one (lines 3 and 4), along with the computation of associated power diagrams (lines 5 to 8), until $C^{\lambda_r}$ becomes equivalent to $C^t_\text{rad}$ (line 2). The main idea is the creation of a sequence of clusterings $(C^{\lambda_0}=C^s_\text{rad},C^{\lambda_1},\dots,C^{\lambda_r}=C^t_\text{rad})$ for increasing $\lambda_i$, where each clustering $C^{\lambda_i}$ is a radial clustering for sites $s^{\lambda_i}=(1-\lambda_i) s + \lambda_i t$. 
Let us take a closer look at why such a sequence is created:

The algorithm keeps running as long as the current clustering is not $C^t_\text{rad}$ (line 2). The number $\lambda$ satisfies $\lambda=\lambda_{i-1}$ at the beginning of the $i$-th run of the while loop. In line $3$, we increase $\lambda$ until a ranging breakpoint is hit. Recall that a breakpoint is a value of $\lambda$ for which the current vertex and a new vertex are equally good with respect to the objective function $c(s^\lambda)$. As we perform the computations over $\mathcal{T}^{\pm}$, whose vertices can be highly degenerate, we solve LP (\ref{dualradialLP}).

A strict increase in $\lambda$ happens, which must lead to a new vertex$\slash$clustering: $c(s^\lambda)=\lambda c(t) + (1- \lambda) c(s)$ now is optimal over $\mathcal{T}^{\pm}(\kappa^-,\kappa^+)$ for both the current clustering $C^{\lambda_{i-1}}$ and a new, next clustering $C^\lambda$. Both clusterings are radial for the sites $s^\lambda$ over $\mathcal{P}^{\pm}(\kappa^-,\kappa^+)$. (In Lemma \ref{lem:transition:rad-to-rad} below, we prove that the two clusterings have distinct clustering vectors, too -- but this is not required for our arguments here.) The vectors $c(s^\lambda)$ and $s^\lambda$ are on the shared boundary of the normal cones for the clusterings in $\mathcal{T}^{\pm}(\kappa^-,\kappa^+)$ and $\mathcal{P}^{\pm}(\kappa^-,\kappa^+)$, respectively. The actual algebra to find a breakpoint is the solution of LP~(\ref{dualradialLP}) for the current clustering and $c=c(s^{\lambda_{i-1}})$; the result gives $\lambda-\lambda_{i-1}$. 

 In the $i$-th iteration, line 4 is an increase of the iteration counter, the labeling of the breakpoint $\lambda$ as $\lambda_i$, and the labeling of the new clustering $C^\lambda$ as $C^{\lambda_i}$. Lines 5 to 8 describe the computation of the associated power diagrams, which we turn to below. Then the next iteration of the loop is started.
The parameter $\lambda$ increases strictly throughout the algorithm and, at the end of the algorithm, it approaches $1$. There are only finitely many clusterings (or vertices of $\mathcal{T}^{\pm}$), so there exists a $\lambda_r \leq 1$ for which $C^{\lambda_r} = C^t_\text{rad}$, as $c(t) \in N_{\mathcal{T}^{\pm}}(C^t_\text{rad})$ by construction. The algorithm then terminates due to a successful check of $C^{\lambda_r} = C^t_\text{rad}$ (line 2). All computations are done over $\mathcal{T}^{\pm}=\mathcal{T}^{\pm}(\kappa^-,\kappa^+)$, so all clusterings in the returned sequence have cluster sizes satisfying $ \kappa_i^- \leq |C_i| \leq \kappa_i^+$ for all $i\leq k$. This proves properties $1$, $2$, and $3$. 

The ranging-based computation of a breakpoint and adjacent {\em vertex} (not only basis) of $\mathcal{T}^{\pm}$ means that two consecutive clusterings are connected by a shared edge. They differ by a single cyclical or sequential exchange of items; see Proposition \ref{prop:adjacentvertices}. This gives property 4. Overall, this proves that the algorithm terminates with a sequence of clusterings $(C^{\lambda_0},C^{\lambda_1},\dots,C^{\lambda_r})$ of the claimed properties.

It remains to prove properties 5 and 6, i.e., to prove that the  constructed sequence of power diagrams $(\overline{P}^{\lambda_1},P^{\lambda_1},\overline{P}^{\lambda_2},\dots,P^{\lambda_{r-1}},\overline{P}^{\lambda_r})$
satisfies the claimed properties. In Section \ref{sec:powerdiagram}, we described the LPs to find a separating power diagram for given sites and a given clustering and to find a shared power diagram for given sites and a pair of clusterings. We have to show that our input allows for feasible solutions.

In the $i$-th iteration, the computation of a feasible solution for LP~(\ref{LP:computeweights:boundarypowerdiagram}) in line 5 refers to the computation of a shared power diagram $\overline{P}^{\lambda_i}$ for sites $\overline{s}=s^{\lambda_i}$ and the consecutive clusterings $C^\nu=C^{\lambda_i}$ and $C^{\nu-1}=C^{\lambda_{i-1}}$. The existence of a feasible solution, i.e., of a shared power diagram, follows from $c(s^{\lambda_i})$ being in the intersection of the normal cones $c(s^{\lambda_i}) \in N_{\mathcal{T}^{\pm}}(C^{\lambda_{i-1}}) \cap N_{\mathcal{T}^{\pm}}(C^{\lambda_i})$ (line 3). Recall that $C^{\lambda_{i-1}}$ and $C^{\lambda_i}$ correspond to adjacent vertices in $\mathcal{T}^{\pm}$. Geometrically, any point on the edge between the vertices (and possibly a face of higher dimension) of $\mathcal{T}^{\pm}$ contains $c(s^{\lambda_i})$ in its normal cone. Equivalently, there is a face of $\mathcal{P}^{\pm}$ of at least dimension $1$ with $s^{\lambda_i}$ in its normal cone. Similar to the proof of Theorem \ref{thm:init-to-rad}, the fractional clusterings corresponding to points in this face allow a separating power diagram that induces both $C^{\lambda_{i-1}}$ and $C^{\lambda_{i}}$ -- the desired shared power diagram $\overline{P}^{\lambda_i}$. This shows property 6.

Lines 6 to 8 describe the computation of a `good' separating power diagram, maximizing the margin, for all intermediate clusterings. The power diagram $P^{\lambda_{i-1}}$ is computed for $C^{\lambda_{i-1}}$ in the $i$-th iteration (for $i\geq 2$). It is performed through a solution of LP~(\ref{LP:computeweights}) for sites $\overline{s}= \frac{1}{2}(s^{\lambda_{i-1}} + s^{\lambda_i})$ and $C^\nu = C^{\lambda_{i-1}}$. Note that information on $\lambda_i$ is required for the definition of $\overline s$, which is why the computation of $P^{\lambda_{i-1}}$ for $C^{\lambda_{i-1}}$ appears at the end of the iteration that found the next $C^{\lambda_{i}}$. 

The site vector $\overline{s}$ lies on the line segment between the sites $s^{\lambda_{i-1}}$ at the breakpoints when transitioning to $C^{\lambda_{i-1}}$ and the sites $s^{\lambda_i}$ when transitioning away from it, to $C^{\lambda_{i}}$. Thus, the vector $\overline{s}$ lies in the normal cone of $w(C^{\lambda_{i-1}})$, and $c(\overline{s})\in N_{\mathcal{T}^{\pm}(\kappa^-,\kappa^+)}(C^{\lambda_{i-1}})$. This implies that $C^{\lambda_{i-1}}$ allows a separating power diagram for sites $\overline{s}$, and such a power diagram is found as a feasible solution to LP~(\ref{LP:computeweights}). This shows property 5.

Summing up, the returned sequence $(\overline{P}^{\lambda_1},P^{\lambda_1},\overline{P}^{\lambda_2},\dots,P^{\lambda_{r-1}},\overline{P}^{\lambda_r})$ is an alternating sequence of shared power diagrams and good separating power diagrams following the sequence of clusterings and satisfying properties 5 and 6. This completes the proof.
\hfill \qed
\end{proof}


We conclude this section by showing that \textsc{AlgoRadToRad}, which is designed to construct a sequence of clusterings through an edge walk over a bounded-shape transportation polytope, also computes a (well-defined) walk along the boundary of the corresponding partition polytope. More specifically, we prove that consecutive clusterings in the returned sequence have distinct clustering vectors. Thus, each step along an edge in the transportation polytope corresponds to a proper step along the boundary of the partition polytope. 

\begin{lemma}\label{lem:transition:rad-to-rad}
Let $(C^{\lambda_0},C^{\lambda_1},\dots,C^{\lambda_r})$ be a sequence of clusterings returned by \textsc{AlgoRadToRad}. 
For every $1 \leq i \leq r$, $w(C^{\lambda_i}) \neq w(C^{\lambda_{i-1}})$.
\end{lemma}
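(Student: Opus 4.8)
The plan is a proof by contradiction: I would assume $w(C^{\lambda_i}) = w(C^{\lambda_{i-1}})$ for some $1 \leq i \leq r$ and derive a contradiction from the structure of the step taken in line~\ref{alg:rad-to-rad:update}. That step moves from $C^{\lambda_{i-1}}$ along an edge of $\mathcal{T}^{\pm}$ to a distinct adjacent vertex $C^{\lambda_i}$, so by Proposition~\ref{prop:adjacentvertices} the graph $CDG(C^{\lambda_{i-1}},C^{\lambda_i})$ is a single path or a single cycle. The first ingredient is an elementary bookkeeping identity: writing $w(C)_\ell \in \mathbb{R}^d$ for the block of the clustering vector belonging to cluster $\ell$, one has for every $\ell \in [k]$
\begin{equation*}
w(C^{\lambda_i})_\ell - w(C^{\lambda_{i-1}})_\ell \;=\; \sum_{\text{arcs into } \ell} x_{\text{label}} \;-\; \sum_{\text{arcs out of } \ell} x_{\text{label}},
\end{equation*}
where the arcs range over $CDG(C^{\lambda_{i-1}},C^{\lambda_i})$. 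Since this CDG is a single path or cycle, each node has at most one incoming and one outgoing arc, so each right-hand side has at most two terms; and since an item of $X$ can label at most one arc of a CDG, the arc labels occurring along the path or cycle are pairwise distinct elements of $X$.

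Substituting $w(C^{\lambda_i}) = w(C^{\lambda_{i-1}})$ then yields a short system of equations on the arc labels. If the CDG is a cycle $v_1 \to v_2 \to \dots \to v_p \to v_1$ with labels $x_{j_1},\dots,x_{j_p}$ (indices modulo $p$), the equation at node $v_s$ reads $x_{j_{s-1}} = x_{j_s}$, forcing $x_{j_1} = \dots = x_{j_p}$; since $p \geq 2$ and these are distinct items, this is a contradiction. If the CDG is a path $v_1 \to \dots \to v_p$ with labels $x_{j_1},\dots,x_{j_{p-1}}$, the two endpoint equations give $x_{j_1} = 0$ (node $v_1$ only loses an item) and $x_{j_{p-1}} = 0$ (node $v_p$ only gains one), while the interior equations give $x_{j_{s-1}} = x_{j_s}$; hence $x_{j_1} = \dots = x_{j_{p-1}} = 0$, and as soon as the path has two or more arcs two distinct items of $X$ are forced to equal $0$ — again a contradiction.

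The remaining case, and the one I expect to be the real obstacle, is a path consisting of a single arc $v_1 \to v_2$ labelled $x_{j_1}$, for which the identity above forces $x_{j_1} = 0$: relocating a single item located at the origin leaves every clustering vector unchanged, so distinctness of items alone is not enough here. I would rule this out using the parametric structure of \textsc{AlgoRadToRad}: such a move is objective-neutral for every site vector, since $c(s^\lambda)^T(y^{\lambda_i} - y^{\lambda_{i-1}}) = x_{j_1}^T(s^{\lambda}_{v_2} - s^{\lambda}_{v_1}) = 0$ for all $\lambda$; hence the edge $\{C^{\lambda_{i-1}},C^{\lambda_i}\}$ of $\mathcal{T}^{\pm}$ already lies in the optimal face in direction $c(s^{\lambda_{i-1}})$, so $C^{\lambda_i}$ is optimal over $\mathcal{T}^{\pm}$ in direction $c(s^{\lambda_{i-1}})$ from the start of the iteration and stays optimal past $\lambda_i$. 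Consequently $\lambda_i$ is not a ranging breakpoint at which the optimal vertex of $\mathcal{T}^{\pm}$ genuinely changes, contradicting the fact that line~\ref{alg:rad-to-rad:update} advances $\lambda$ to such a breakpoint before recording a new clustering $C^{\lambda_i} \neq C^{\lambda_{i-1}}$. With all cases excluded, $w(C^{\lambda_i}) \neq w(C^{\lambda_{i-1}})$, so each edge step of the walk in $\mathcal{T}^{\pm}$ projects to a genuine step on the boundary of $\mathcal{P}^{\pm}$, as desired.
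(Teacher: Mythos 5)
Your proof is correct, and its core is the same as the paper's: since line~\ref{alg:rad-to-rad:update} steps to a distinct adjacent vertex of $\mathcal{T}^{\pm}$, Proposition~\ref{prop:adjacentvertices} forces $CDG(C^{\lambda_{i-1}},C^{\lambda_i})$ to be a single path or cycle, and the per-cluster bookkeeping identity together with distinctness of the items of $X$ yields a nonzero block of $w(C^{\lambda_i})-w(C^{\lambda_{i-1}})$. The paper executes this by fixing a node $l$ with both an entering arc $x^+_l$ and a leaving arc $x^-_l$ and noting the $l$-th block equals $x^+_l-x^-_l\neq 0$; your cycle and multi-arc path cases are just the contrapositive packaging of the same computation. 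Where you genuinely go beyond the paper is the single-arc path: there no node has both an in- and an out-arc, the paper's argument does not literally apply, and since the paper only assumes $\sum_j x_j=0$ (so one item may sit at the origin) distinctness alone cannot rule out $w(C^{\lambda_i})=w(C^{\lambda_{i-1}})$. Your resolution --- that moving an item located at the origin is objective-neutral for every $s^\lambda$, so the corresponding edge lies in the optimal face throughout and cannot be the edge selected at a genuine ranging breakpoint --- is a reasonable reading of \textsc{AlgoRadToRad}, though it leans on an interpretation of the pseudocode (the reduced cost of that edge is identically zero, so it never determines the breakpoint in~(\ref{breakpoint})); an equally clean alternative is to simply exclude an item at the origin by a harmless perturbation. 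In short: same argument for the cases the paper treats, plus a legitimate patch for a degenerate case the paper's proof silently skips.
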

\begin{proof}{Proof.}
Line~\ref{alg:rad-to-rad:update} of \textsc{AlgoRadToRad} ensures that $C^{\lambda_{i-1}} \not= C^{\lambda_i}$ for all $i \geq 1$. Further, the corresponding vertices $y^{\lambda_{i-1}}$ and $y^{\lambda_i}$ of $\mathcal{T}^{\pm}$ share an edge. By Proposition~\ref{prop:adjacentvertices}, the clustering difference graph $CDG(C^{\lambda_{i-1}},C^{\lambda_i})$ contains a single cycle or path. For any fixed node $l \in [k]$ on this cycle or path, let $x^+_l$ and $x^-_l$ be the label of the arc that enters and leaves node $l$ in the CDG, respectively.

Since the items in data set $X\subset \mathbb{R}^d$ are distinct, i.e., we have $x^+_l \not= x^-_l$. The component of the vector $w(C^{\lambda_i}) - w(C^{\lambda_{i-1}})$ corresponding to cluster $l$ equals $\sum_{x \in C^{\lambda_i}_l } x - \sum_{x \in C^{\lambda_{i-1}}_l} x  = x^+_l - x^-_l \neq 0$, i.e., $w(C^{\lambda_i}) - w(C^{\lambda_{i-1}}) \neq 0$. This implies $w(C^{\lambda_i}) \neq w(C^{\lambda_{i-1}})$.
\hfill \qed
\end{proof}

\section{Conclusion}\label{sec:conclusion}

In this paper, we designed an algorithm to transition between two given constrained LSAs in the form of a sequence of clusterings that satisfies the many favorable properties listed in Theorem \ref{thm:overalltransition}. We would like to highlight a few natural questions to study for further improvements.

First, a study and optimization of the efficiency of our methods would be of interest. While most of the applications that we have encountered were not sensitive to computation times for the transition, of course the scalability of our methods is of interest. In Appendix \ref{app:computations}, we show some running times for our proof-of-concept implementation. As we use steps of the simplex method, and information from an optimal simplex tableau, its bottleneck lies in the setup and solution of the underlying LPs. These LPs are simple generalizations of classical transportation problems, and as such are well-understood. Essentially, we can scale to problem sizes for which transportation problems are still solvable. In addition to a bound based on LP theory, it would be promising to study whether some of the LP-based steps of our algorithm (such as lines 3 and 4 in \textsc{AlgoLSAtoRad} and line 3 in \textsc{AlgoRadToRad}) can be replaced by more efficient combinatorial algorithms. 

Second, related to this is a desire for a bound on the number of steps of the transition. In particular, we are interested in the number of steps for \textsc{AlgoRadToRad}, where radial clusterings are traversed. This part of the transition corresponds to an edge walk over $\mathcal{T}^\pm$, so that some first bounds come from the combinatorial diameter or so-called circuit diameter of these polytopes; see \cite{Borgwardt2013,bv-19a}. However, previous literature only takes into account the assignment of items to clusters, and not the locations of items or sites in the underlying space. For the fixed-site transition from a general LSA to its radial counterpart, we only have a bound in the form of the number of feasible clustering shapes; see Theorem \ref{thm:init-to-rad}, property $7$. Again, this bound does not take into account any geometric information. In the computations for Appendix \ref{app:computations}, we have seen that, in practice, the number of steps required is much lower than the number of shapes for the fixed-site transitions. There is plenty of room for improved theoretical bounds in both cases through a use of the locations of items and sites.

Third, there are several ways to try to improve on the design of the current approach. We broke up the walk into three parts. Essentially, the walk from an initial LSA to its corresponding radial clustering (and from the target radial clustering to target LSA) serves as a pre-processing such that the main part of the transition is a walk between radial clusterings. In turn, this allowed for the direct use of LP theory in the design of \textsc{AlgoRadToRad}. While the cluster sizes of initial and target clustering are typically very similar, and thus the use of radial clusterings in the transition not a noteworthy restriction, the first and final part of the transition do not change the sites. It would be interesting to study whether a similar walk can be computed without this hard three-part split, where a linear transition from the initial sites to the target sites happens throughout the whole walk. Such a transition could be more `direct', and result in a lower number of steps. There are many other ways that a `better' transition could be designed -- for example, a transition can be influenced by a translation of the data set itself, because the locations of items influence which LSAs are radial and which are not. One could impose additional restrictions of `monotone' cluster sizes during the transition, i.e., cluster sizes may either only increase (if $|C^t_i|> |C^s_i|$) or only decrease (if $|C^t_i| < |C^s_i|$) -- our algorithms in this paper only guarantee that we lie between the given bounds. Further, in some applications a transition where each step consists of the parallel execution of multiple exchanges of items may be desired. For all of these possible improvements, we expect the design of a (combinatorial) algorithm to be challenging, because it does not suffice to use the well-understood relationship between LSAs (and power diagrams) and linear programming over partition and transportation polytopes.

Finally, we worked in a setting in which initial and target clustering are known and separable. Our key goal was to preserve separability throughout. The desire to construct explicit transitions between clusterings can appear in a wide range of settings. In some of these, the way that the target clustering is found may have implications on what a transition should look like. For example, a data set may be clustered as an LSA with outliers, i.e., items that fall out of their cluster's cell. It would be interesting to study a possible generalization of our methods in which outliers, appropriately penalized, are allowed in the transition. Another example would be applications in which the target clustering changes dynamically and frequently, to a point where a `re-optimization' during the transition becomes necessary. And finally, it is possible that the target clustering is not given explicitly, but that only a general goal is stated -- such as ``gradually transition as many customers as possible to fair premium classes in this many steps". Then the search for a target clustering and the computation of the transition become intimately connected.


\section*{Acknowledgment}
\small Borgwardt gratefully acknowledges support of this work through NSF award 2006183 {\em Circuit Walks in Optimization}, Algorithmic Foundations, Division of Computing and Communication Foundations, through AFOSR award FA9550-21-1-0233 {\em The Hirsch Conjecture for Totally-Unimodular Polyhedra}, Airforce Office of Scientific Research, and through Simons Collaboration Grant 524210 {\em Polyhedral Theory in Data Analytics} before. Happach has been supported by the Alexander von Humboldt Foundation with funds from the German Federal Ministry of Education and Research (BMBF).

\small
\section*{Biographies}
Steffen Borgwardt is an Associate Professor in the Department of Mathematical and Statistical Sciences at the University of Colorado Denver. His research lies on the intersection of combinatorial optimization, polyhedral theory, and linear programming. He holds a habilitation on {\em Data Analysis through Polyhedral Theory}, is a lifetime Humboldt fellow, and received a joint EURO Excellence in Practice Award for his work on optimization in land consolidation.

Felix Happach received his Ph.D. in 2020 from the Technische Universität München, as a member of the Operations Research Group that bridges the School of Management and Department of Mathematics. His research interests are in the geometric representation of applied problems from Operations Research and Data Analysis. He received a Master's Thesis Award 2017 of the German Operations Research Society (GOR) and a second place in the Student Paper Prize 2018 of the INFORMS Optimization Society. 

Stetson Zirkelbach is a Ph.D. student at the University of Colorado Denver. He has a background in software development and holds an M.S. in Applied Mathematics, with an emphasis in optimization.

During studies of the combinatorial diameters of partition and transportation polytopes, surprisingly direct applications for the constructed walks came up. They reflected the need for a sequence of clusterings that would retain separability throughout. The research took an algorithmic turn and, through generalizations of edge walks, deeper studies of the geometric properties of the underlying polytopes, and the adaptation of classical linear programming techniques, led to this work.

\normalsize
\bibliographystyle{informs2014}
\bibliography{clusteringpolytopes}

\newpage

\begin{subappendices}
\renewcommand{\thesection}{\Alph{section}}%

\small
\section{Ranging for degenerate vertices}\label{app:ranging}Consider a linear program in standard form given as  
\begin{equation}\label{standardformLP}
\begin{array}{ll@{}ll}
\text{min}  & \displaystyle c^Tx &\\
\text{s.t. }& \displaystyle  Ax = b&\\
				 & x \geq 0.&
\end{array}
\end{equation}
A particularly simple variation of sensitivity analysis is called {\em ranging}: given a change to the objective function $c$ in the form of $c+\lambda \Delta c$ for some $\lambda \in \mathbb{R}$ and some vector $\Delta c$, one has to devise upper (and lower) bounds on $\lambda$ at which the current optimal basis is left. For a nondegenerate optimal vertex $x^*$, information from a simplex tableau for $x^*$ can be used to devise an exact formula; see for example \cite{VanderbeiBook}.

In this paper, we perform ranging for the possibly highly degenerate vertices of $\mathcal{T}^\pm$. A degenerate vertex is represented by multiple bases. We are interested in finding the positive $\lambda$ for when the vertex, not only the current basis, is left. To this end, recall that LP optimality requires primal and dual feasibility. Thus, one can solve an LP over the dual feasible region where strong duality has to remain valid when changing $c$ to $c+\Delta c$. For LP (\ref{standardformLP}), the following LP finds the maximum $\lambda$ such that dual feasibility is retained and strong duality still holds, i.e., the maximum $\lambda$ before leaving the current vertex: 
\begin{equation}\label{dualradialLP}
\begin{array}{ll@{}ll}
\text{max}  & \displaystyle \lambda &\\
\text{s.t. }& \displaystyle  A^T y \leq c + \lambda \cdot \Delta c&\\
				 & b^Ty = c^Tx^*+\lambda \cdot (\Delta c)^Tx^*.&
\end{array}
\end{equation}
\vspace*{1cm}

\section{Radial clusterings for different cluster size bounds}\label{app:radial}
Let us take a brief look at the role of radial clusterings as intermediate steps of the transitions in our approach. A transition to radial clusterings is necessary such that ranging techniques allow for the computation of a linear transition from initial sites $s$ to target sites $t$; see \textsc{AlgoRadToRad}. Radial clusterings, however, are not only LSAs, but optimizers of $c(s)$ over clusterings of all feasible shapes given by $\theta^\pm$ or $\kappa^\pm$. The wider the range of $\theta^\pm$ (or $\kappa^\pm$), the more special these LSAs become. 

\begin{figure}
\centering
\subcaptionbox{A radial clustering (single-shape)}{\includegraphics[width=0.4\textwidth]{5ptSimpleStartLSA.png}}%
\hfill
\subcaptionbox{A radial clustering (all-shape)}{\includegraphics[width=0.40\textwidth]{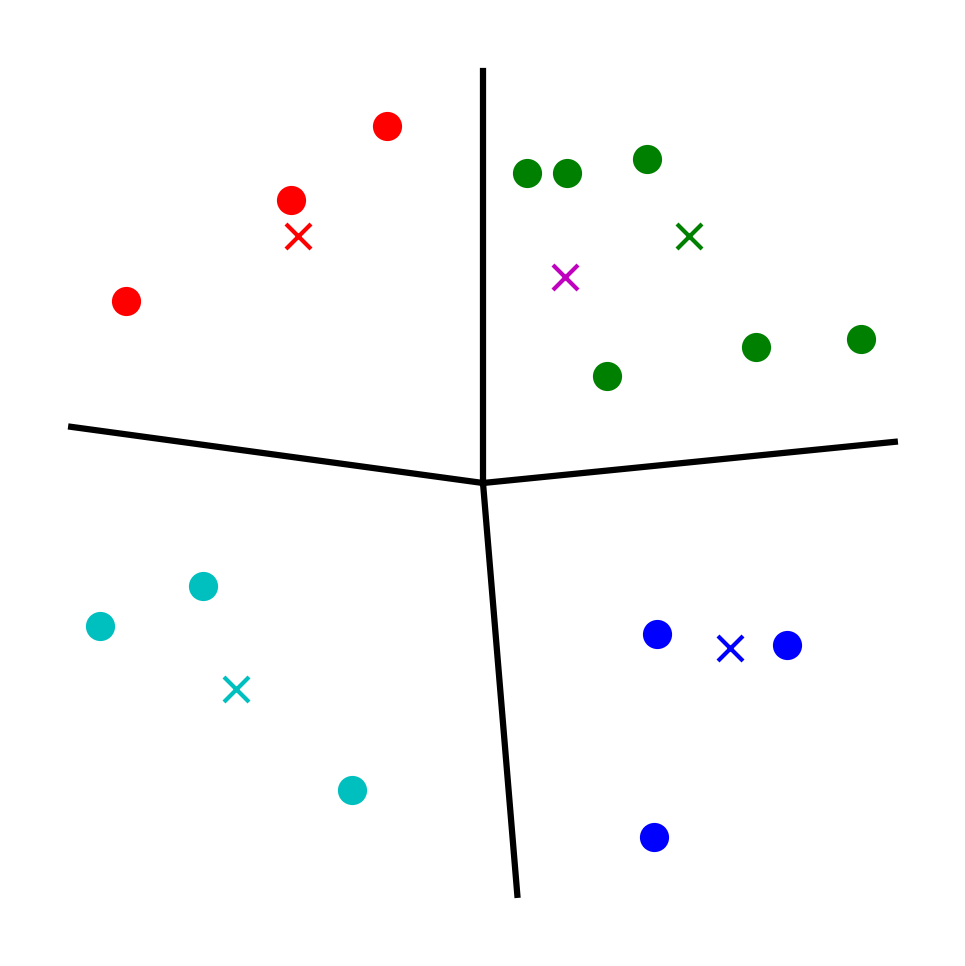}}%
\caption{Two radial clusterings. In (a), the computation was done over $\mathcal{T}^=$, a single-shape transportation polytope. In (b), it was done over $\mathcal{T}$, the all-shape polytope. Note that (a) is a `normal' constrained LSA, while the clusters in (b) are arranged `radially' around the origin, and there is an empty cluster.}
\label{fig:radEx}
\end{figure}

Figure~\ref{fig:radEx} shows two examples. In a single-shape setting, {\em any} LSA is a radial clustering and vice versa, so there is no restriction; see Figure~\ref{fig:radEx}(a). In contrast, in the all-shape setting where all clustering shapes are feasible, items $x_j$ are assigned to the sites $s_i$ (and thus cluster $C_i$) with the largest scalar product $x_j^Ts_i$. Thus, the clusters are arranged `radially' around the origin, and it is even possible to get empty clusters; see Figure~\ref{fig:radEx}(b). In fact, this geometric observation is the reason for the naming of the term. This gives yet another justification for choosing $\theta^\pm=\kappa^\pm$, i.e., for using the smallest range of shapes that allow for a transition. By doing so, no `additional structure' is imposed on the intermediate clusterings: all of them -- radial or not -- are (general) constrained LSAs for prescribed cluster sizes between those of $C^s$ and $C^t$. The only restriction happening in view of our transition is that we do not (need to) consider all constrained LSAs (of other shapes) to construct it. 


\section{Computational Experiments}\label{app:computations}
All examples in this paper were created through a sample implementation and some proof-of-concept computations. An implementation of our algorithms, some examples, and some instructions to run them are available at \url{https://github.com/szirkelbach/Transitioning-Separable-Clusterings}. The implementation is in python and uses Gurobi as an LP solver. We ran our computations on a standard laptop (Intel i7-7700HQ, CPU at 2.80 GHz, 16 GB RAM).

While computational speed is not a primary concern in the applications that we encountered, the scalability of the designed algorithms and the number of steps in the transition are of interest. We briefly report on some observations from computational experiments. We used randomly generated data sets of items in a unit box, as well as random, evenly spread initial and target site vectors, and ran experiments for different numbers $k$ of clusters, $n$ of items, and cluster size bounds. All of the numbers provided in Tables \ref{tab:LSAtoradial} and \ref{tab:radialtoradial} are averages for twenty runs. As we use steps of the simplex method in both \textsc{AlgoLSAtoRad} and  \textsc{AlgoRadToRad}, our current implementations have bottlenecks in the setup and solution of the underlying LPs.

Let us first take a look at \textsc{AlgoLSAtoRad}, the computation of a radial clustering from an LSA. Recall that LSAs and radial clusterings are equivalent if cluster sizes are fixed. When generating an instance for our computations, we randomly generated cluster size bounds between $(1-\epsilon)\cdot \frac{n}{k}$ and $(1+\epsilon)\cdot \frac{n}{k}$ for various values of a {\em cluster size slack} $\epsilon$. In Table \ref{tab:LSAtoradial}, we report on our observations. The first three columns represent the scale of the problem: the number $k$ of clusters and $n$ of items, and the resulting matrix size for the underlying LP, i.e., the number of coefficients in the constraint matrix of Formulation~(\ref{transport}), derived as $(2k+n)\cdot kn$. The cluster size slack $\epsilon$ is represented as a percentage of $\frac{n}{k}$. The final columns show the number of steps and total time for different values of $\epsilon$.

 \begin{table}[b]
     \centering
     \begin{tabular}{|c|c|c|c|c|c|c|c|c|}\hline 
       & & & \multicolumn{3}{c|}{number of steps} & \multicolumn{3}{c|}{total time (in sec)}   \\ \hline
          k&n & matrix size& $\epsilon =10\%$ & $\epsilon =25\%$ & $\epsilon =40\%$ & $\epsilon =10\%$ & $\epsilon =25\%$ & $\epsilon =40\%$\\ \hline
          5 & 100 & 55,000 & 1.55 & 5.35 & 8.56 & 0.01 & 0.04 & 0.06 \\ \hline
		 10 & 100 & 120,000 & 1.60 & 5.88 & 11.8 & 0.01 & 0.12 & 0.23\\ \hline
 		 10 & 500 & 2,600,000 & 12.5 & 24.0 & 50.2 & 0.96 & 3.07 & 4.87 \\ \hline
		 10 & 1000 & 10,200,000 & 24.6 & 70.2 & 132 & 4.88 & 11.1 & 19.6 \\ \hline
		 10 & 5000 & 251,000,000 & 157 & 380 & 625 & 180 & 377 & 579 \\ \hline
		 20 & 5000 & 504,000,000 & 129 & 335 & 640  & 213 & 659 & 1,260 \\ \hline
     \end{tabular}
     \caption{Computational experiments for \textsc{AlgoLSAtoRad}, the transition from an LSA to a radial clustering.}
     \label{tab:LSAtoradial}
 \end{table}

We were able to implement \textsc{AlgoLSAtoRad} as a single run of primal simplex that stays in memory. Dynamic updates to the right-hand sides and optimality checks for different cluster size bounds are performed. As such, we observe low computation times until the underlying matrix becomes very large.  Essentially, performance and scaling of our algorithm are similarly behaved to solving a linear program, with some overhead due to data processing and entering the optimization process for updates. The number of steps in the transition increases about linearly with the number of items and more than linearly with $\epsilon$. Note that a large $\epsilon$ (such as $40\%$) allows for a dramatic difference in clustering shapes. In practice, we only encountered low values of $\epsilon$ and only few steps were required. 

Next, we turn to \textsc{AlgoRadToRad}, the transition between radial clusterings. The number of steps in such a transition depends on the `distance' between the initial and target sites or the `difference' between initial and target radial LP. This information can be represented in multiple ways. To have a measure that takes into account both sites and data set, we distinguish our runs by the percentage $\delta$ of items assigned to different clusters. For example $25\%$ for $n=2,500$ would refer to $625$ items assigned to different clusters. In Table \ref{tab:radialtoradial}, we report on our observations. The first three columns again represent the scale of the problem. For \textsc{AlgoRadToRad}, the matrix size is relevant in two ways: it is not only the size of Formulation~(\ref{transport}), but also roughly the size for LP (\ref{dualradialLP}) used in the breakpoint computation. For the different problem sizes, we show the average time per iteration, i.e., for the computation of a breakpoint, as well as overall number of steps and total time for $\delta=10\%$ and $\delta=25\%$. 

 \begin{table}[t]
     \centering
     \begin{tabular}{|c|c|c|c|c|c|c|c|}\hline 
       & & & time per iteration & \multicolumn{2}{c|}{number of steps} & \multicolumn{2}{c|}{total time (in sec)}   \\ \hline
          k&n & matrix size & (in sec) & $\delta =10\%$ & $\delta =25\%$  & $\delta =10\%$ & $\delta =25\%$ \\ \hline
          5 & 100 & 55,000 & $<0.1\;\;$ & 6 & 13 & 0.2& 0.4\\ \hline
		 10 & 100 & 120,000 & $<0.1\;\;$ & 6 & 12 & 0.2 & 0.5 \\ \hline
 		 10 & 500 & 2,600,000 & 0.2 & 29 & 68 & 5.7 & 14.0   \\ \hline
		 10 & 1000 & 10,200,000 & 0.7 & 58 & 124 & 39.9 & 87.5 \\ \hline
		 10 & 2500 & 63,000,000 & 6.2 & 121 & 380 & 747 & 2,364 \\ \hline
		 20 & 2500 & 127,000,000 & 16.1 & 104 & 345 & 1,675 & 5,554 \\ \hline
		 10 & 5000 & 251,000,000 & 34.5 & 232 & 702 & 8,007 & 24,205  \\ \hline
		 20 & 5000 & 504,000,000 & 113.2 & 220 & 685  & 24,794 & 77,655  \\ \hline
     \end{tabular}
     \caption{Computational experiments for \textsc{AlgoRadToRad}, the transition between radial clusterings.}
     \label{tab:radialtoradial}
 \end{table}

There are a couple of interesting observations. First, the overall computation times are significantly larger than for \textsc{AlgoLSAtoRad} and this effect becomes more pronounced for larger data sets. Because of this, it is appropriate to think of \textsc{AlgoLSAtoRad} as a pre-processing step for \textsc{AlgoRadToRad}. 

In each iteration of \textsc{AlgoRadToRad}, LP (\ref{dualradialLP}) is solved to find a breakpoint and the clustering is updated. Both right-hand side and some coefficients of this LP change every time, so while we are able to keep it in memory and update it dynamically, a full solution is required in the current implementation. Additionally, LP (\ref{dualradialLP}) has complicated coefficients $\Delta c$ in the constraint matrix associated to the breakpoint variable $\lambda$ (and for $c^Tx^*$), so even at the same matrix size, the LP is harder to solver than an LP over Formulation~(\ref{transport}), which has only zeroes and ones. For increasing problem sizes, Gurobi uses more and more time to verify correctness of its intermediate results to guarantee numerical stability. We added a few extra rows to Table \ref{tab:radialtoradial} compared to Table \ref{tab:LSAtoradial} to exhibit where this effect becomes noticeable. In contrast, the update to the new clustering through a few primal simplex steps over Formulation~(\ref{transport}), kept in memory and warm started with a vertex that is already adjacent to the new optimum, is negligible. 

It is important to note that different values of $\delta$ do not impact the size of underlying LPs and thus the average time per iteration. (Note that the dimension of a data set only goes into the computation of distances and does not impact the size of the LPs either.) Memory requirements remain constant throughout. Despite long computation times and a large number of steps for large problem sizes, they are readily solvable on a normal laptop. The scaling of \textsc{AlgoRadToRad} is limited not by memory, but issues of numerical stability in the LP solver and time available.

However, of course different values of $\delta$ strongly affect the total number of steps. We observed a roughly linear dependence of the number of steps on $\delta$, independently of the number of clusters. For the number of clusters itself, we did not see a clear effect. When the number of clusters was higher, longer exchanges of items were constructed in the iterations and the total number of steps remained similar (or became slightly lower). In all of our runs, the number of steps remained below the total number of items to be moved.

\end{subappendices}
\end{document}